%

\documentclass[11pt,reqno]{amsart}
\usepackage[utf8]{inputenc}
\usepackage[T1]{fontenc}
\usepackage{amsmath}
\usepackage{mathtools}
\usepackage{bm}
\usepackage{xcolor}
\usepackage{graphicx}
\usepackage{amssymb}
\usepackage{braket}
\usepackage{enumitem}
\usepackage{dsfont}
\usepackage{hyperref}

%
%
%
\newtheorem{conjecture}{Conjecture}
\newtheorem{theorem}{Theorem}
\newtheorem{corollary}[theorem]{Corollary}
\newtheorem{lemma}{Lemma}[section]

\newtheorem{remark}[lemma]{Remark}

\numberwithin{equation}{section}


\newcommand{\dps}{\displaystyle}
\newcommand{\ii}{\infty}
\newcommand\R{{\ensuremath {\mathbb R} }}
\newcommand\C{{\ensuremath {\mathbb C} }}

\newcommand\1{{\ensuremath {\mathds 1} }}
\newcommand\bS{\mathbb S} 
\renewcommand\phi{\varphi}

\newcommand{\cE}{\mathcal{E}}

\newcommand{\cL}{\mathcal{L}}
\newcommand{\eps}{\epsilon}
\renewcommand{\epsilon}{\varepsilon}
\newcommand\pscal[1]{{\ensuremath{\left\langle #1 \right\rangle}}}
\newcommand{\norm}[1]{ \left| \! \left| #1 \right| \! \right| }

\renewcommand{\ge}{\geqslant}
\renewcommand{\le}{\leqslant}
\renewcommand{\geq}{\geqslant}
\renewcommand{\leq}{\leqslant}

\renewcommand{\tilde}{\widetilde}

\newcommand{\nn}{\nonumber}
\newcommand{\re}{{\rm Re}}
\newcommand{\im}{{\rm Im}}

\begin{document}
\title[The double-power nonlinear Schr\"odinger equation]{The double-power nonlinear Schr\"odinger equation and its generalizations: uniqueness, non-degeneracy and applications}

\author[M. Lewin]{Mathieu LEWIN}
\address{CNRS \& CEREMADE, Universit\'e Paris-Dauphine, PSL Research University, Place de Lattre de Tassigny, 75016 Paris, France.}
\email{mathieu.lewin@math.cnrs.fr}

\author[S. Rota Nodari]{Simona ROTA NODARI}
\address{Institut de Math\'ematiques de Bourgogne (IMB), CNRS, UMR 5584,
Universit\'e Bourgogne Franche-Comt\'e,
21000 Dijon, France.}
\email{simona.rota-nodari@u-bourgogne.fr}

\date{\today}

\begin{abstract}
In this paper we first prove a general result about the uniqueness and non-degeneracy of positive radial solutions to equations of the form $\Delta u+g(u)=0$. Our result applies in particular to the double power non-linearity where $g(u)=u^q-u^p-\mu u$ for $p>q>1$ and $\mu>0$, which we discuss with more details. In this case, the non-degeneracy of the unique solution $u_\mu$ allows us to derive its behavior in the two limits $\mu\to0$ and $\mu\to\mu_*$ where $\mu_*$ is the threshold of existence. This gives the uniqueness of energy minimizers at fixed mass in certain regimes. We also make a conjecture about the variations of the $L^2$ mass of $u_\mu$ in terms of $\mu$, which we illustrate with numerical simulations. If valid, this conjecture would imply the uniqueness of energy minimizers in all cases and also give some important information about the orbital stability of $u_\mu$.
\end{abstract}

\maketitle

\vspace{-1cm}
\tableofcontents

\section{Introduction}

In this paper we study the positive solutions (`ground states') to some semi-linear elliptic equations in $\R^d$  of the general form
\begin{equation}\label{nleq}
\boxed{\Delta u +g(u)=0\qquad \text{ in $\R^d$ with $d\geq2$},}
\end{equation}
where $\Delta u=\sum_{i=1}^d\partial^2_{x_i}u$ is the Laplacian. We assume that the nonlinearity $g$ is gauge-invariant under the action of the group $\bS^1$, that is
\begin{equation}\label{phaseinv}
g(|u|e^{i\theta})=g(|u|)e^{i\theta}
\end{equation} 
for any $u,\theta\in \R$. In other words, without loss of generality we may assume that $g:\R\to\R$ is a real odd function such that $g(0)=0$. Then~\eqref{nleq} is invariant under translations and multiplications by a phase factor. 

The study of the \emph{existence} and \emph{uniqueness} of positive solutions to equations of the type~\eqref{nleq} has a very long history. Of particular interest is the (focusing) \emph{nonlinear Schr\"odinger equation} (NLS) corresponding to 
\begin{equation}
 g(u)=u^q-u,\qquad 1<q<2^*-1
 \label{eq:g_NLS}
\end{equation}
where $2^*=2d/(d-2)$ is the critical Sobolev exponent in dimensions $d\geq3$ and $2^*=\ii$ in dimensions $d=1,2$. Here and everywhere else in the paper we use the convention that $u^q:=|u|^{q-1}u$ to ensure that~\eqref{phaseinv} is satisfied. 
In the particular case~\eqref{eq:g_NLS}, the uniqueness of positive solutions was proved first by Coffman~\cite{Coffman-72} for $q=3$ and $d=3$, and then by Kwong~\cite{Kwong-89} in the general case. These results have been extended to a larger class of non-linearities by many authors, including for instance~\cite{PelSer-83,LeoSer-87,KwoZha-91,CheLin-91,McLeod-93,PucSer-98,SerTan-00,Jang-10}. 

Another important property for applications is the \emph{non-degeneracy} of these solutions, which means that the kernel of the linearized operators is trivial, modulo phase and space translations:
$$\ker\left(\Delta+\frac{g(u)}{u}\right)={\rm span}\{u\},\qquad \ker\left(\Delta+g'(u)\right)={\rm span}\{\partial_{x_1}u,...,\partial_{x_d}u\}.$$
This property plays a central role for the stability or instability of these stationary solutions~\cite{Weinstein-85,ShaStr-85,GriShaStr-87,GriShaStr-90} in the context of the time-dependent Schr\"odinger equation
$$i\partial_tu=\Delta u+g(u).$$
In the NLS case~\eqref{eq:g_NLS} non-degeneracy was shown first in~\cite{Coffman-72,Kwong-89,Weinstein-85}, but for general nonlinearities it does not necessarily follow from the method used to show uniqueness. 

In this paper, we are particularly interested in the \emph{double-power nonlinearity}
\begin{equation}
\boxed{g(u)=-u^p+u^q-\mu u,\qquad p>q>1,\quad\mu>0,\quad d\geq2}
\label{eq:g_double_power_NLS}
\end{equation}
with a defocusing large exponent $p$ and a focusing smaller exponent $q$. Uniqueness in this case was shown in~\cite{SerTan-00}, but the non-degeneracy of the solutions does not seem to follow from the proof. The nonlinearity~\eqref{eq:g_double_power_NLS} has very strong physical motivations. The cubic-quintic nonlinearity $q=3,p=5$ appears in many applications and usually models systems with attractive two-body interactions and repulsive three-body interactions~\cite{Anderson-71,Kartavenko-84,MerIsi-88}.  In this case, non-degeneracy was shown in~\cite{KilOhPocVis-17} for $d=3$ and in~\cite{CarSpa-20_ppt} for $d=2$. On the other hand, the case $p=7/3$ and $q=5/3$ in dimension $d=3$ was considered in~\cite{Ricaud-17} in the context of symmetry breaking for a model of Density Functional Theory for solids. A general result which covers~\eqref{eq:g_double_power_NLS} appeared later in~\cite{AdaShiWat-18}.

Some time before~\cite{KilOhPocVis-17,Ricaud-17,AdaShiWat-18}, we had considered in~\cite{LewRot-15} the case 
$$g(u)=a\sin^{3}u\cos u-b \sin u\cos u,\qquad a>2b,$$ 
which naturally arises in the non-relativistic limit of a Dirac equation in nuclear physics~\cite{Rota-PhD,EstRot-12,EstRot-13,TreRot-13}. As was already mentioned in~\cite{LewRot-15}, the proof we gave of the uniqueness and non-degeneracy of radial solutions in this particular case is general and can be applied to a variety of situations. It covers the double-power nonlinearity~\eqref{eq:g_double_power_NLS} in the whole possible range of the parameters. In order to clarify the situation, in Section~\ref{sec:uniqueness_general} we start by reformulating the result of~\cite{LewRot-15} in a general setting. We also provide a self-contained proof in Section~\ref{sec:proof_thm_uniqueness} for the convenience of the reader.

Next we discuss at length the properties of the unique and non-degenerate solution $u_\mu$ for the double-power non-linearity~\eqref{eq:g_double_power_NLS}. Of high interest is the $L^2$ mass 
$$M(\mu)=\int_{\R^d}u_\mu(x)^2\,dx$$
of this solution. In the NLS case~\eqref{eq:g_NLS} the mass is a simple explicit power of $\mu$, but for the double-power nonlinearity~\eqref{eq:g_double_power_NLS}, $M$ is an unknown function. In Section~\ref{sec:double_power} we determine its exact behavior in the two regimes $\mu\to0^+$ and $\mu\to\mu_*^-$, where $\mu_*$ is the threshold for existence of solutions. This allows us to make an important conjecture about the variations of $M$ over the whole interval $(0,\mu_*)$, partly inspired by~\cite{KilOhPocVis-17,Ricaud-17,CarSpa-20_ppt} and supported by numerical simulations. In short, our conjecture says that the branch of solutions has at most one unstable part, that is, $M'$ vanishes at most once over $(0,\mu_*)$. 

One important motivation for studying the variations of $M$ concerns the uniqueness of energy minimizers at fixed mass
\begin{multline}
I(\lambda)=\inf\bigg\{ \frac12\int_{\R^d}|\nabla u|^2\,dx+\frac{1}{p+1}\int_{\R^d} |u|^{p+1}\,dx- \frac{1}{q+1}\int_{\R^d} |u|^{q+1}\,dx\ :\\
u\in H^1(\R^d)\cap L^{p+1}(\R^d),\ \int_{\R^d}|u|^2\,dx=\lambda\bigg\},
\label{eq:I_lambda_intro}
\end{multline}
which naturally appears in physical applications. Any minimizer, when it exists, is positive and solves the double-power NLS equation for some Lagrange multiplier $\mu$, hence equals $u_\mu$ after an appropriate space translation. The difficulty here is that several $\mu$'s could in principle give the same mass $\lambda$ and the same energy $I(\lambda)$, so that the uniqueness of solutions to the equation at fixed $\mu$ does not at all imply the uniqueness of energy minimizers. Nevertheless we conjecture that minimizers of~\eqref{eq:I_lambda_intro} are always unique. This would actually follow from the previously mentioned conjecture on $M$, since the latter implies that $M$ is one-to-one on the unique branch of stable solutions. In fact, our analysis of the function $M$ allows us to prove some partial results which, we think, are an interesting first step towards a better understanding of the general case. More precisely, we show that
\begin{itemize}
 \item minimizers of $I(\lambda)$ are always unique for $\lambda$ large enough and for $\lambda$ close enough to the critical mass $\lambda_c$ above which minimizers exist;
 \item the set of $\lambda$'s for which minimizers are not unique is at most finite;
 \item the number of minimizers at those exceptional values of $\lambda$ is also finite, modulo phases and space translations. 
\end{itemize}

\medskip

The paper is organized as follows. In the next section we state our main result on the uniqueness and non-degeneracy of solutions to~\eqref{nleq}. Then, in Section~\ref{sec:double_power} we describe our findings on the double-power NLS equation. The rest of the paper is devoted to the proof of our main results. 

\bigskip

\noindent{\textbf{Acknowledgement.}} We thank R\'emi Carles and Christof Sparber for useful discussions. This project has received funding from the European Research Council (ERC) under the European Union's Horizon 2020 research and innovation programme (grant agreement MDFT No 725528 of M.L.) and from the Agence Nationale de la Recherche (grant agreement DYRAQ, ANR-17-CE40-0016, of S.R.N.).

\section{Uniqueness and non-degeneracy: an abstract result }\label{sec:uniqueness_general}

Here we state our abstract result about the uniqueness and non-degeneracy of solutions of~\eqref{nleq}.

\begin{theorem}[Uniqueness and non-degeneracy]\label{thmuniqnondeg} 
Let $0< \alpha<\beta$ and $g$ be a continuously differentiable function on $[0,\beta]$. Assume that the following conditions hold:
\begin{enumerate}[label=\textnormal{(H\arabic*)}]
\item \label{hyp1} We have $g(0)=g(\alpha)=g(\beta)=0$, $g$ is negative on $(0,\alpha)$ and positive on $(\alpha,\beta)$ with $g'(0)<0$, $g'(\alpha)>0$ and $g'(\beta)\leq0$.

\smallskip

\item \label{hyp2} For every $\lambda>1$, the function 
\begin{equation}\label{defI}
I_{\lambda}(x):=xg'(x)-\lambda g(x)
\end{equation}
has exactly one root $x_*$ on the interval $(0,\beta)$, which belongs to $(\alpha,\beta)$. 
\end{enumerate}
Then equation \eqref{nleq} admits \emph{at most one} positive radial solution with $\|u\|_{\infty}<\beta$ and such that $u(x),u'(x)\to 0$ when $|x|\to \infty$. Moreover, when it exists, this solution is non-degenerate in the sense that 
\begin{equation}
\ker\big(\Delta+g'(u)\big)={\rm span}\left\{\partial_{x_1}u,\ldots,\partial_{x_d}u\right\},\qquad \ker\left(\Delta+\frac{g(u)}{u}\right)={\rm span}\{u\}.
\label{eq:non_degenerate}
\end{equation}
\end{theorem}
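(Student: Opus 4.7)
The plan is to reduce \eqref{nleq} to a scalar radial ODE on $(0,\infty)$, prove uniqueness via a shooting/Wronskian argument driven by \ref{hyp2}, and then derive non-degeneracy via a spherical-harmonic decomposition combined with Sturm--Liouville comparison on each angular sector. First I set up the shooting family: for each $\rho\in(0,\beta)$ the Cauchy problem
\begin{equation*}
u''+\tfrac{d-1}{r}u'+g(u)=0,\qquad u(0)=\rho,\ u'(0)=0,
\end{equation*}
admits a unique maximal solution $u_\rho$. The energy $E(r)=\tfrac12(u_\rho')^2+G(u_\rho)$ with $G'=g$ is nonincreasing along the flow and vanishes at infinity on any ground state; combined with the sign structure in \ref{hyp1}, this forces a ground state to satisfy $\rho\in(\alpha,\beta)$ with $G(\rho)>0$, to be strictly decreasing, and to decay exponentially (thanks to $g'(0)<0$).

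For \textbf{uniqueness}, I introduce the variation $w_\rho=\partial_\rho u_\rho$, which satisfies the linearized equation $w''+\tfrac{d-1}{r}w'+g'(u)w=0$ with $w(0)=1$, $w'(0)=0$. A direct computation yields the modified Wronskian identity
\begin{equation*}
\bigl(r^{d-1}(u_\rho'w_\rho-u_\rho w_\rho')\bigr)'=r^{d-1}\,I_1(u_\rho)\,w_\rho,\qquad I_1(x)=xg'(x)-g(x).
\end{equation*}
To bring the full one-parameter family \ref{hyp2} into play, I combine this with a Pohozaev-type integration obtained by testing the linearized equation against $r^{d-1}u^{\lambda-1}$; the resulting identity, up to boundary terms, involves the weighted integral of $u^{\lambda-1}I_\lambda(u_\rho)\,w_\rho$, since
\begin{equation*}
\Bigl(\partial_r^2+\tfrac{d-1}{r}\partial_r+g'(u)\Bigr)u^\lambda=\lambda(\lambda-1)u^{\lambda-2}(u')^2+u^{\lambda-1}I_\lambda(u).
\end{equation*}
The single-zero property of $I_\lambda$ on $(0,\beta)$ then converts into a strict monotonicity of a weighted integral along $(u_\rho)$, excluding the coexistence of two ground states via the Coffman--Kwong trichotomy (escape/oscillation/ground state). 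This is the abstraction of the argument in~\cite{LewRot-15}.

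For \textbf{non-degeneracy}, I decompose any $L^2$ kernel element of $L_\pm:=\Delta+V_\pm$ (with $V_+=g'(u)$ and $V_-=g(u)/u$) by spherical harmonics; on the $\ell$-th sector the operators reduce to
\begin{equation*}
L_\pm^{(\ell)}=\partial_r^2+\tfrac{d-1}{r}\partial_r-\tfrac{\ell(\ell+d-2)}{r^2}+V_\pm.
\end{equation*}
Since $L_-^{(0)}u=0$ with $u>0$, the function $u$ is the ground state of $-L_-^{(0)}$ with eigenvalue $0$, and so $\ker L_-^{(0)}=\R u$ by simplicity of Sturm--Liouville ground states; for $\ell\geq1$, the centrifugal term makes $\langle v,-L_-^{(\ell)}v\rangle\geq\ell(\ell+d-2)\int v^2/r^2>0$ for any regular nonzero $v$, killing the kernel. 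Likewise $L_+^{(1)}u'=0$ (by differentiating the radial ODE in $r$) and $u'<0$ strictly on $(0,\infty)$ make $-u'$ the ground state of $-L_+^{(1)}$, yielding $\ker L_+^{(1)}=\R u'$, i.e.\ the $\ell=1$ contribution $\mathrm{span}\{\partial_{x_i}u\}$; the strict inequality $-L_+^{(\ell)}>-L_+^{(1)}$ for $\ell\geq2$ handles the higher sectors by the same quadratic-form argument. Finally, the radial ($\ell=0$) kernel of $L_+$ is handled by Cauchy uniqueness of the linearized ODE at $r=0$: any such element is proportional to $w_\rho$, and if $w_\rho$ decayed at infinity the implicit function theorem applied to the shooting function would produce a one-parameter family of ground states near $u_\rho$, contradicting the uniqueness just established.

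The \textbf{main technical obstacle} is the uniqueness step, specifically the translation of the abstract single-zero-of-$I_\lambda$ hypothesis \ref{hyp2} into a strict monotonicity along the shooting family. The bare Wronskian identity involves only $I_1$, so one must introduce the parameter $\lambda$ via the Pohozaev-type integration above and then choose $\lambda$ so that the integrand $r^{d-1}u_\rho^{\lambda-1}I_\lambda(u_\rho)w_\rho$ acquires a definite sign over the relevant range of $u_\rho$. Once this sign information is secured, the Coffman--Kwong shooting argument for uniqueness and the Sturm--Liouville arguments for non-degeneracy proceed by standard methods.
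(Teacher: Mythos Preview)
Your overall architecture is right --- shooting family, Wronskian with the linearized solution $w_\rho$, then angular decomposition for non-degeneracy --- but the core of the uniqueness argument has a genuine gap. The comparison function $u^\lambda$ is not the one that works. Your own computation shows
\[
L(u^\lambda)=\lambda(\lambda-1)u^{\lambda-2}(u')^2+u^{\lambda-1}I_\lambda(u),
\]
and the first term on the right does not disappear. In the Wronskian identity $(r^{d-1}(w f'-fw'))'=r^{d-1}wL(f)$ with $f=u^\lambda$, this extra $\lambda(\lambda-1)u^{\lambda-2}(u')^2$ term has the sign of $w$, which changes, so you cannot extract a definite sign from the integrand as you claim. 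Worse, there is no value of $\lambda$ for which the Wronskian $r^{d-1}(w(u^\lambda)'-u^\lambda w')$ vanishes at the zero $r_*$ of $w$: at $r_*$ it equals $-r_*^{d-1}u(r_*)^\lambda w'(r_*)\neq0$, so you cannot synchronize the boundary terms.

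The correct comparison function is the linear one $f=u+cru'$, for which $L(u)=I_1(u)$ and $L(ru')=-2g(u)$ give the clean identity $L(f)=I_{1+2c}(u)$ with no spurious term. The parameter $c$ is then fixed by the requirement $f(r_*)=0$, i.e.\ $c=-u(r_*)/(r_*u'(r_*))>0$, which makes the Wronskian vanish at both $0$ and $r_*$. Its derivative $r^{d-1}wI_\lambda(u)$ must then vanish somewhere in $(0,r_*)$, and since (H2) says $I_\lambda$ has a single root, $I_\lambda(u)$ keeps a constant sign on $(r_*,\infty)$. From there one shows directly that $w$ and $w'$ diverge to $-\infty$ exponentially. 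This divergence is what drives the trichotomy: it forces $(y-\varepsilon,y)\subset S_+$ and $(y,y+\varepsilon)\subset S_-$ near any ground-state initial value $y$, so $S_0$ is a single point. The same divergence of $w$ gives $\ker L_+^{(0)}=\{0\}$ immediately, so you do not need the implicit-function-theorem detour; your treatment of the sectors $\ell\ge1$ is fine.
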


\begin{remark} The assumption~\ref{hyp2} can be replaced by the two stronger conditions
\begin{enumerate}
\item[\textnormal{(H2')}] there exists $x^*\in (0,\beta)$ such that $g''>0$ on $(0,x^*)$ and $g''<0$ on $(x^*,\beta)$;
\item[\textnormal{(H2'')}]  the function $x\mapsto \frac{xg'(x)}{g(x)}$ is strictly decreasing on $(\alpha,\beta)$.
\end{enumerate}
\end{remark}

\begin{remark}\label{rmk:Ilambda}
In the proof we use \textnormal{(H2)} only for one (unknown) particular $\lambda>1$. Should one be able to localize better this $\lambda$ for a concrete nonlinearity $g$, one would then only need to verify \textnormal{(H2)} in this region. 
\end{remark}

\begin{remark}\label{rmk:beta}
If $g$ is defined on the half line $\R_+$ and negative on $(\beta,\ii)$, then all the positive solutions satisfy $u<\beta$. This follows from the maximum principle since $-\Delta u=g(u)\leq0$ on $\{u\geq\beta\}$.
\end{remark}

\begin{figure}[t]
\centering
\includegraphics[width=8cm]{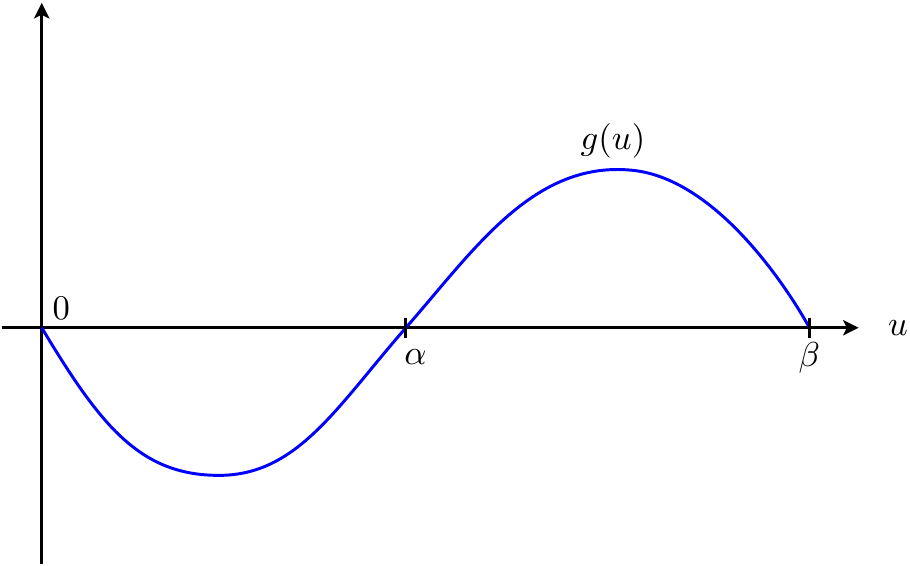}
\caption{Typical form of the admissible nonlinearity $g$ in~\eqref{nleq}.\label{fig:g}}
\end{figure}

As we have mentioned, Theorem~\ref{thmuniqnondeg} was indeed proved in~\cite{LewRot-15} although perhaps only implicitly since we were there mainly concerned with a special case for $g$ (see~\cite[Lemma~3]{LewRot-15} and the comments after the statement). The detailed proof of Theorem~\ref{thmuniqnondeg} is provided in Section~\ref{sec:proof_thm_uniqueness}, for the convenience of the reader. 

The operators appearing in~\eqref{eq:non_degenerate} are the two linearized operators (for the real and imaginary parts of $u$, respectively) at the solution $u$ and the non-degeneracy means that their kernel (at the solution $u$) is spanned by the generators of the two symmetries of the problem (space translations and multiplication by a phase). Non-degeneracy is important in many respects, as we will recall below.

Note that our assumptions (H1)--(H2) require the existence of three successive zeroes for $g$ as in Figure~\ref{fig:g}. In the traditional NLS case there are only two and this corresponds to taking $\beta=+\ii$ in our theorem, a situation where the same result is valid, as proved by McLeod in~\cite{McLeod-93} and reviewed in~\cite{Tao-06,Frank-13}. 

In the article~\cite{SerTan-00} by Serrin and Tang, uniqueness is proved under similar assumptions as in Theorem~\ref{thmuniqnondeg}. More precisely, the authors assume instead of (H2) that $xg'(x)/ g(x)$ is non-increasing on $(\alpha,\beta)$, in dimensions $d\geq3$. We assume less on $(\alpha,\beta)$ but put the additional assumption that $I_\lambda$ does not vanish on $(0,\alpha)$. The function $I_\lambda$ in~\eqref{defI} appears already in~\cite{LeoSer-87,McLeod-93}. The method of proof in~\cite{SerTan-00} does not seem to provide the non-degeneracy of solutions. In the present article we clarify this important point by providing a self-contained proof in the spirit of McLeod~\cite{McLeod-93}, later in Section~\ref{sec:proof_thm_uniqueness}. Similar arguments were independently used in~\cite{KilOhPocVis-17,AdaShiWat-18}.

We conclude this section with a short discussion of the existence of solutions to~\eqref{nleq}. Let $g$ satisfy the condition (H1) of Theorem~\ref{thmuniqnondeg} on $[0,\beta]$ and extend it as a continuously differentiable function over $\R$ such that $g$ is odd and $g<0$ on $(\beta,\ii)$. Then we know from \cite{BerLio-83,BerGalKav-83} that, in dimension $d\geq2$, there exists at least one radial decreasing positive solution $u$ to~\eqref{nleq} which is $C^2$ and decays exponentially at infinity, if and only if 
\begin{equation}
G(\eta):=\int_{0}^\eta g(s)\,ds>0\ \text{for some $\eta>0$}. 
\label{condex3}
\end{equation}
That this condition is necessary follows from the Pohozaev identity
\begin{equation}\label{pohozaev}
\frac{d-2}{2d}\int_{\R^d}|\nabla u|^2\,dx=\int_{\R^d}G(u(x))\,dx
\end{equation}
which implies that $G(u(x))$ has to take positive values, it cannot be always negative (as it is for $x\to\ii$ since $G(s)\sim g'(0)s^2/2<0$ for $s\to0$). 

Finally, we recall that if $g\in C^{1+\varepsilon}$ for some $\varepsilon>0$ and since $g(0)=0$ and $g'(0)<0$, it follows from the moving plane method~\cite{GidNiNir-81} that all the positive solutions to \eqref{nleq} are radial decreasing about some point in $\R^d$.

\section{The double-power nonlinearity}\label{sec:double_power}

In this section we consider the nonlinear elliptic equation
\begin{equation}\label{powernl}
\boxed{-\Delta u=-u^{p}+u^{q}-\mu u }
\end{equation}
with $p>q>1$ and $\mu>0$ and $u^p:=|u|^{p-1}u$. This equation appears in a variety of practical situations, including density functional theory in quantum chemistry and condensed matter physics~\cite{LeBris-95,Ricaud-17}, Bose-Einstein condensates with three-body repulsive interactions~\cite{MerIsi-88}, heavy-ion collisons processes~\cite{Kartavenko-84} and nonclassical nucleation near spinodal in mesoscopic models of phase transitions~\cite{MurVan-04,CahHil-59,SaaHoh-92,CrosHoh-93}. The nonlinearity 
\begin{equation}
g_\mu(u)=-u^{p}+u^{q}-\mu u
\label{eq:def_g_mu}
\end{equation}
satisfies the condition (H1) of Theorem~\ref{thmuniqnondeg} hence, due to the Pohozaev identity~\eqref{pohozaev}, there exists a $\mu_*>0$ such that ~\eqref{powernl} admits no nontrivial solutions for $\mu\ge\mu_*$, whereas it always has at least one positive solution for $\mu\in(0,\mu_*)$. The value of $\mu_*$ is given by 
\begin{equation}\label{eq:mu_star}
\mu_*=\frac{2(p+1)^{\frac{q-1}{p-q}}(q-1)^{\frac{q-1}{p-q}}(p-q)}{(q+1)^{\frac{p-1}{p-q}}(p-1)^{\frac{p-1}{p-q}}}.
\end{equation}
For $\mu=\mu_*$ what is happening is that the primitive
$$G_\mu(u):=-\frac{|u|^{p+1}}{p+1}+\frac{|u|^{q+1}}{q+1}-\frac{\mu}2 |u|^2$$
becomes non-positive over the whole half line $\R_+$, with a double zero at
\begin{equation}
\beta_*:=\left(\frac{(q-1)(p+1)}{(q+1)(p-1)}\right)^{\frac{1}{p-q}}<1. 
\label{eq:def_beta_*}
\end{equation}
For all $0<\mu\leq\mu_*$, the function $g_\mu$ is seen to have two zeroes $0<\alpha_\mu<\beta_\mu$ such that 
$$\begin{cases}
\dps \lim_{\mu\to0^+}\alpha_\mu=0,  \\[0.2cm]
\dps \lim_{\mu\to0^+}\beta_\mu=1,
  \end{cases}\qquad\begin{cases}
\dps \lim_{\mu\to\mu_*^-}\alpha_\mu=\alpha_*\in(0,\beta_*), \\[0.2cm]
\dps \lim_{\mu\to\mu_*^-}\beta_\mu=\beta_*.
  \end{cases}$$
In addition $\mu\mapsto \beta_\mu$ is \emph{decreasing} over $(0,\mu_*)$.

\subsection{Branch parametrized by the Lagrange multiplier $\mu$}

The following result is a corollary of Theorem~\ref{thmuniqnondeg}. 

\begin{theorem}[Uniqueness and non-degeneracy]\label{thmpowernl} Let $d\geq2$, $p>q>1$ and $g_\mu$ as in~\eqref{eq:def_g_mu}. For all $\mu\in (0,\mu_*)$, the nonlinear equation~\eqref{powernl} has a \emph{unique positive solution} $u_\mu$ tending to $0$ at infinity, modulo space translations. It can be chosen \emph{radial-decreasing}. It is \emph{non-degenerate}:
\begin{equation}
\begin{cases}
\ker\big(-\Delta+pu_\mu^{p-1}-qu_\mu^{q-1}+\mu\big)={\rm span}\left\{\partial_{x_1}u_\mu,\ldots,\partial_{x_d}u_\mu\right\},\\
\ker\big(-\Delta+u_\mu^{p}-u_\mu^{q}+\mu\big)={\rm span}\{u_\mu\}.
\end{cases}
\label{eq:non_degenerate2}
\end{equation}
This solution satisfies
$$0<u_\mu(x)<\beta_\mu<1,\qquad\forall x\in\R^d$$ 
and $u_\mu(0)\to\beta_*$ when $\mu\nearrow\mu_*$.
\end{theorem}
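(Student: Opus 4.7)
The plan is to apply Theorem~\ref{thmuniqnondeg} with $g=g_\mu$, $\alpha=\alpha_\mu$, $\beta=\beta_\mu$ for every $\mu\in(0,\mu_*)$, and then close the remaining gaps with short standard arguments. The verification of (H1) is immediate from the factorisation $g_\mu(u)=u\bigl(u^{q-1}-u^{p-1}-\mu\bigr)$: the map $u\mapsto u^{q-1}-u^{p-1}$ increases then decreases on $\R_+$ with a single maximum, so the equation $u^{q-1}-u^{p-1}=\mu$ has exactly two simple positive roots $\alpha_\mu<\beta_\mu$ for $\mu<\mu_*$, which at once yields the sign and derivative conditions at $0$, $\alpha_\mu$, $\beta_\mu$ required by (H1).

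For (H2) I would use the sufficient condition (H2') stated in the remark following Theorem~\ref{thmuniqnondeg}. A direct computation, $g_\mu''(u)=u^{q-2}\bigl[q(q-1)-p(p-1)u^{p-q}\bigr]$, exhibits a unique inflection point at $x^*:=\bigl(q(q-1)/p(p-1)\bigr)^{1/(p-q)}$ with $g_\mu''>0$ before and $g_\mu''<0$ after. Comparing with~\eqref{eq:def_beta_*} reduces the inequality $x^*<\beta_*$ to $q(q+1)<p(p+1)$, which holds since $p>q$; combined with the decrease of $\mu\mapsto\beta_\mu$ and the limit $\beta_\mu\to\beta_*$, this gives $x^*<\beta_*<\beta_\mu$. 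Theorem~\ref{thmuniqnondeg} then delivers both the uniqueness of radial positive solutions vanishing at infinity with $\|u\|_\infty<\beta_\mu$, and the non-degeneracy~\eqref{eq:non_degenerate2}.

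Uniqueness among \emph{all} positive solutions modulo translations follows by invoking the Gidas–Ni–Nirenberg symmetry result~\cite{GidNiNir-81} to reduce to the radial setting, as recalled at the end of Section~\ref{sec:uniqueness_general}, together with the standard exponential decay of positive solutions when $g_\mu'(0)=-\mu<0$. Existence for $\mu\in(0,\mu_*)$ is the content of the Berestycki–Lions criterion~\eqref{condex3}, satisfied precisely for such $\mu$. The bound $u_\mu<\beta_\mu$ follows from Remark~\ref{rmk:beta}, with strict inequality because the constant function $\beta_\mu$ is the unique smooth radial solution of the ODE with initial data $u(0)=\beta_\mu$, $u'(0)=0$ and $u_\mu$ decays at infinity; the inequality $\beta_\mu<1$ is trivial since $g_\mu(1)=-\mu<0$.

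The step requiring slightly more care is the limit $u_\mu(0)\to\beta_*$ as $\mu\nearrow\mu_*$. The natural tool is the radial energy $E(r):=\tfrac12 u_\mu'(r)^2+G_\mu(u_\mu(r))$, which is non-increasing (its derivative equals $-(d-1)u_\mu'(r)^2/r$) and vanishes at infinity, whence $G_\mu(u_\mu(0))\geq 0$. Together with $u_\mu(0)\leq\beta_\mu$, this traps $u_\mu(0)$ inside $[\gamma_\mu,\beta_\mu]$, where $\gamma_\mu\in(\alpha_\mu,\beta_\mu)$ denotes the unique zero of $G_\mu$ in that range. Since $G_{\mu_*}$ is non-positive on $\R_+$ with $\beta_*$ as its only (double) positive zero, both $\gamma_\mu$ and $\beta_\mu$ converge to $\beta_*$ as $\mu\to\mu_*^-$, and the sandwich concludes the argument.
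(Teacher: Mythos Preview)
Your argument has one real gap: you invoke (H2') as if it were by itself a sufficient replacement for (H2), but the remark after Theorem~\ref{thmuniqnondeg} states that (H2) can be replaced by the \emph{pair} of conditions (H2') \emph{and} (H2''), not by (H2') alone. The convexity condition (H2') only controls the interval $(0,\alpha_\mu)$: your computation of the unique inflection point $x^*$ is correct and, combined with the sign of $g_\mu'$, does yield $I_\lambda>0$ on $(0,\alpha_\mu)$. It does not, however, force $I_\lambda$ to have a \emph{single} root on $(\alpha_\mu,\beta_\mu)$. At a root $x_0\in(\alpha_\mu,\beta_\mu)$ one has $g_\mu'(x_0)>0$, and when $x_0<x^*$ (which does occur, e.g.\ for small $\mu$ where $\alpha_\mu<x^*$) the sign of $I_\lambda'(x_0)=(1-\lambda)g_\mu'(x_0)+x_0g_\mu''(x_0)$ is indeterminate, so multiple roots are not excluded by convexity alone. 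Uniqueness on $(\alpha_\mu,\beta_\mu)$ requires the strict decrease of $x\mapsto xg_\mu'(x)/g_\mu(x)$ there, which is precisely (H2'').

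The paper's own proof verifies (H2) directly by combining exactly these two ingredients: the convexity structure for $(0,\alpha_\mu)$, and the Serrin--Tang monotonicity result~\cite{SerTan-00} for $(\alpha_\mu,\beta_\mu)$. The fix is therefore easy---add the verification of (H2''), either by citing~\cite{SerTan-00} or by a direct computation---but as written your invocation of Theorem~\ref{thmuniqnondeg} is incomplete. The remaining steps (existence via Berestycki--Lions, radiality via Gidas--Ni--Nirenberg, the bound $u_\mu<\beta_\mu$, and the sandwich $\gamma_\mu\leq u_\mu(0)<\beta_\mu$ for the limit $u_\mu(0)\to\beta_*$) are all correct and match the paper's treatment.
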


Existence was proved in~\cite{BerLio-83, BerGalKav-83} and uniqueness in~\cite{SerTan-00, Jang-10}. The non-degeneracy of the solution follows from Theorem~\ref{thmuniqnondeg} as in~\cite{LewRot-15}. The cases $d\in\{2,3\},p=5,q=3$ and $d=3,p=7/3,q=5/3$ were handled in~\cite{KilOhPocVis-17,CarSpa-20_ppt} and~\cite{Ricaud-17,Ricaud-PhD}, respectively. Later in Theorem~\ref{thm:limit_mu_star} we will see that $u_\mu(x)\to\beta_*$ when $\mu\nearrow\mu_*$, for every $x\in\R^d$. The behavior of $u_\mu$ when $\mu\searrow0$ depends on the parameters $p$ and $q$, however, and will be given in Theorem~\ref{thm:limit_mu_0}.

\begin{proof} 
The existence part of the theorem is \cite[Example 2]{BerLio-83} for $d\ge 3$ and \cite{BerGalKav-83} for $d=2$. Moreover, $g_\mu(u)$ satisfies the hypotheses of \cite{GidNiNir-81} therefore all the positive solutions to \eqref{powernl} are radial decreasing about some point in $\R^d$. The function $g_\mu$ also satisfies hypothesis~\ref{hyp1} for some $0<\alpha_\mu<\beta_\mu$ and it is negative on $(\beta_\mu,\ii)$. Since $g$ is $C^\ii$ on $(0,\ii)$ and $u>0$, we deduce from regularity theory that $u$ is real-analytic on $\R^d$. We have $-\Delta u=g_\mu(u)<0$ on the open ball $\{u>\beta_\mu\}$, hence $u$ must be constant on this set, by the maximum principle. This definitely cannot happen for a real analytic function tending to 0 at infinity and therefore $u\leq\beta_\mu$ everywhere. The maximum of $u$ can also not be equal to $\beta_\mu$ since otherwise $u\equiv \beta_\mu$ which is the unique corresponding solution to~~\eqref{powernl}. We have therefore proved that all the positive solutions must satisfy $u<\beta_\mu$ and we are in position to apply Theorem~\ref{thmuniqnondeg}. It only remains to show that $g$ satisfies hypothesis~\ref{hyp2}. 

We first show that for all $x\in (0,\alpha_\mu)$ and for all $\lambda >1$, $I_\lambda(x)> 0$. 
To this end, we observe that for $x>0$ sufficiently small and $\lambda>1$,
\begin{equation*}
I_{\lambda}(x)=(1-\lambda)g_\mu'(0)x=(\lambda-1)\mu x>0
\end{equation*}
Next, by computing the second derivative of $g_\mu$, we remark that $g_\mu'$ is strictly increasing in $(0,x^*)$, attains his maximum at 
$$x^*=\left(\frac{q(q-1)}{p(p-1))}\right)^{\frac1{p-q}}$$ 
and is strictly decreasing in $(x^*,+\infty)$. Moreover, as a consequence of~\ref{hyp1}, $g'_\mu$ has to vanishes at least once. This implies that $g'_\mu$ vanishes exactly twice, namely at $x_1\in(0,\alpha_\mu)$ and at $x_2\in (\alpha_\mu,\beta_\mu)$. Hence $g_\mu'(x)<0$ for all $x\in (0,x_1)$ and $g_\mu'(x)>0$ for all $x\in (x_1,\alpha_\mu)\subset(x_1,x_2)$. As a consequence, since $g_\mu''(x)>0$ for all $x\in(0,x^*)$ and $x_1<x^*$, 
$$
I'_{\lambda}(x)=(1-\lambda)g_\mu'(x)+xg_\mu''(x)>0
$$
for all $x\in (0,x_1)$. This shows that $I_\lambda$ is strictly increasing in this interval and $I_\lambda(x)>0$ for all $x\in (0,x_1)$. Next, for all $x\in(x_1,\alpha_\mu)$, we have $g_\mu'(x)>0$ and $g_\mu(x)<0$, which implies $I_\lambda(x)>0$.

It remains to show that $I_\lambda$ vanishes exactly once in $(\alpha_\mu,\beta_\mu)$. It is clear that $I_\lambda$ has to vanish at least once since $I_\lambda(\alpha_\mu)=\alpha_\mu g_\mu'(\alpha_\mu)>0$ and $I_\lambda(\beta_\mu)=\beta_\mu g_\mu'(\beta_\mu)<0$. Moreover, it is proved in \cite{SerTan-00} that the function
$$
h(x)=\frac{xg_\mu'(x)}{g_\mu(x)}
$$ 
is decreasing on $(\alpha_\mu,\beta_\mu)$. This is enough to conclude that $I_\lambda$ has exactly one zero in $(\alpha_\mu,\beta_\mu)$. Hence $g$ satisfies hypothesis~\ref{hyp2} and this concludes the proof of the uniqueness and non-degeneracy in Theorem~\ref{thmpowernl}.

Since $\mu\mapsto \beta_\mu$ is decreasing and its limit at $\mu=0$ is $1$, we deduce that the family $(u_\mu)_\mu$ of solutions to \eqref{powernl} is uniformly bounded: $0<u_\mu<\beta_\mu<1$. If we denote by $\eta_\mu$ the first positive zero of $G_\mu$, then we also have $u_\mu(0)\geq \eta_\mu$, since $G_\mu(u_\mu(0))>0$ by~\eqref{pohozaev}. Since $\eta_\mu\to\beta_*$ when $\mu\nearrow\mu_*$, we obtain $u_\mu(0)\to\beta_*$ when $\mu\nearrow\mu_*$.
\end{proof}

\subsection{Behavior of the mass}
It is very important to understand how the mass of the solution $u_\mu$
\begin{equation}\label{defmass}
\boxed{ M(\mu):=\int_{\R^d} u_\mu(x)^2\;dx}
\end{equation}
varies with $\mu$. In the case of the usual focusing NLS equation with one power nonlinearity $q$ (which formally corresponds to $p=+\ii$ since $u<1$, at least when $q<2^*-1$), the mass is an explicit function of $\mu$ by scaling:
$$M_{\rm NLS}(\mu)=\mu^{\frac{4+d-dq}{2(q-1)}}\int_{\R^d}Q(x)^2\,dx$$
where $-\Delta Q-Q^q+Q=0$. There is no such simple relation for the double-power nonlinearity.

The importance of $M(\mu)$ is for instance seen in the Grillakis-Shatah-Strauss theory~\cite{Weinstein-85,ShaStr-85,GriShaStr-87,GriShaStr-90,BieGenRot-15} of stability for these solutions within the time-dependent Schr\"odinger equation. The latter says that the solution $u_\mu$ is \emph{orbitally stable} when $M'(\mu)>0$ and that it is \emph{unstable} when $M'(\mu)<0$. Therefore the intervals where $M$ is increasing furnish stable solutions whereas those where $M$ is decreasing correspond to unstable solutions. The Grillakis-Shatah-Strauss theory relies on another conserved quantity, the energy, which is discussed in the next section and for which the variations of $M$ also play a crucial role.

Note that the derivative can be expressed in terms of the linearized operator 
$$\boxed{\cL_\mu:=-\Delta-g'_\mu(u_\mu)=-\Delta+pu_\mu^{p-1}-qu_\mu^{q-1}+\mu}$$
by
\begin{equation}
M'(\mu)=2\Re\pscal{u_\mu,\frac{\partial}{\partial\mu} u_\mu}=-2\pscal{u_\mu,(\cL_\mu)^{-1}_{\rm rad}u_\mu}. 
 \label{eq:M'(mu)}
\end{equation}
Here $(\cL_\mu)^{-1}_{\rm rad}$ denotes the inverse of $\cL_\mu$ when restricted to the subspace of radial functions, which is well defined and bounded due to the non-degeneracy~\eqref{eq:non_degenerate2} of the solution.\footnote{The functions $\partial_{x_j}u_\mu$ spanning the kernel of $\cL_\mu$ are orthogonal to the radial sector, hence $0$ is not an eigenvalue of $(\cL_\mu)_{\rm rad}$. But then $0$ belongs to its resolvent set, since the essential spectrum starts at $\mu>0$.} This is why the non-degeneracy is crucial for understanding the variations of $M$. From the implicit function theorem, note that $M$ is a real-analytic function on $(0,\mu_*)$. 

\medskip

Our main goal is to understand the number of sign changes of $M'$, which tells us how many stable and unstable branches there are. Here is a soft version of a conjecture which we are going to refine later on. It states that there is at most one unstable branch. 

\begin{conjecture}[Number of unstable branches]
Let $d\geq2$ and $p>q>1$. The function $M'$ vanishes at most once on $(0,\mu_*)$.
\end{conjecture}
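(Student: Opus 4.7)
The plan is to study $M(\mu)$ via the formula $M'(\mu)=-2\pscal{u_\mu,(\cL_\mu)^{-1}_{\rm rad}u_\mu}$ together with a spectral description of $\cL_\mu|_{\rm rad}$. The non-degeneracy from Theorem~\ref{thmpowernl} makes this operator invertible on the radial sector, and the variational characterization of $u_\mu$ as a mountain-pass critical point should moreover imply that $\cL_\mu|_{\rm rad}$ has exactly one negative eigenvalue $-\sigma_\mu$ with a positive radial eigenfunction $\phi_\mu$. Decomposing $u_\mu=c_\mu\phi_\mu+u_\mu^\perp$ with $\pscal{u_\mu^\perp,\phi_\mu}=0$ then recasts the derivative as
\begin{equation*}
M'(\mu) = \frac{2c_\mu^2}{\sigma_\mu}-2\pscal{u_\mu^\perp,\bigl(\cL_\mu|_{\{\phi_\mu\}^\perp}\bigr)^{-1}u_\mu^\perp},
\end{equation*}
a difference of two strictly positive real-analytic functions of $\mu$, so that sign changes of $M'$ correspond exactly to their crossings.

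Next I would pin down the boundary behavior of $M'$ using the two asymptotic analyses announced elsewhere in Section~\ref{sec:double_power} for $\mu\to 0^+$ and $\mu\nearrow\mu_*^-$. As $\mu\nearrow\mu_*$, the profile $u_\mu$ tends pointwise to $\beta_*$ on expanding regions, so $M(\mu)\to+\infty$, and a refined expansion should yield $M'(\mu)>0$ in a left-neighborhood of $\mu_*$. As $\mu\searrow 0$, a suitable rescaling to a one-power NLS ground state (or to a degenerate profile, depending on the position of $q$ relative to the $L^2$-critical exponent $1+4/d$) should determine the sign of $M'(\mu)$ near $0$. Combined with the real-analyticity of $M$ on $(0,\mu_*)$, which forces zeros of $M'$ to be isolated, this localizes any sign change of $M'$ to a compact subinterval of $(0,\mu_*)$ and reduces the conjecture to a purely interior statement on that subinterval.

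The main obstacle is to exclude more than one such sign change in the interior. Real-analyticity together with the boundary signs still permits arbitrarily many isolated zeros, so one needs a global structural property. Two natural routes would be: (i) reparametrize the branch by $\beta:=u_\mu(0)\in(\alpha_*,\beta_*)$, express $M$ as a function of $\beta$ through the radial ODE satisfied by the profile, and use a shooting/phase-plane argument in the $(u,u')$-plane to preclude multiple turning points; (ii) seek a hidden convexity or monotonicity of the scalar $\mu\mapsto\pscal{u_\mu,(\cL_\mu)^{-1}u_\mu}$ by further differentiating $\cL_\mu(\partial_\mu u_\mu)=-u_\mu$ in $\mu$ and coupling the resulting identity with the Pohozaev relation~\eqref{pohozaev}. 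I expect this to be the genuine difficulty: for arbitrary $p>q>1$ no such global monotone or convex structure is apparent, which is precisely why the authors present this as a conjecture supported by numerical simulations and by one-sided control at the two endpoints, rather than as a theorem.
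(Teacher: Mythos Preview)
This statement is a \emph{conjecture}, not a theorem: the paper does not prove it and explicitly presents it as open, supported only by the endpoint behavior established in Theorems~\ref{thm:limit_mu_0} and~\ref{thm:limit_mu_star} and by numerical simulations. There is therefore no ``paper's own proof'' against which to compare your attempt.

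Your proposal is not a proof either, and you say as much in your final paragraph. The first two paragraphs set up standard machinery (the spectral decomposition of $M'(\mu)$, the endpoint asymptotics), all of which the paper itself develops in Sections~\ref{sec:proof_mu_0}--\ref{sec:proof_mu_star}. But this machinery only yields that $M'$ has isolated zeros and fixes its sign near $0$ and $\mu_*$; it does not bound the number of interior zeros. Your two suggested ``routes'' in the third paragraph are plausible heuristics, not arguments: neither the $\beta$-reparametrization with a phase-plane analysis nor a second differentiation coupled with Pohozaev is shown to produce a monotonicity or convexity statement strong enough to exclude multiple sign changes. Your own closing sentence correctly identifies this as the gap. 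In short, you have accurately diagnosed why the conjecture is hard, but you have not closed the gap, and neither has the paper.
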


If true, this conjecture would have a number of interesting consequences, with regard to the stability of $u_\mu$ and the uniqueness of energy minimizers. 

We show in Theorem~\ref{thm:limit_mu_star} below that the stable branch is always present since $M'>0$ close to $\mu_*$. In order to make a more precise conjecture concerning the number of roots of $M'$ in terms of the exponents $p$ and $q$ and the dimension $d\geq2$, it is indeed useful to analyze the two regimes $\mu\to0$ and $\mu\to\mu_*$, where one can expect some simplification. This is the purpose of the next two subsections.

\subsubsection{The limit $\mu\searrow0$}
The following long statement about the limit $\mu\searrow0$ is an extension of results from~\cite{MorMur-14}, where the limit of $u_\mu$ was studied, but not that of $M$ and $M'$.

\begin{theorem}[Behavior when $\mu\searrow0$]\label{thm:limit_mu_0}
Let $d\geq2$ and $p>q>1$. 

\medskip

\noindent$\bullet$ \emph{\bf (Sub-critical case)} If $d=2$, or if $d\geq3$ and 
$$q<1+\frac{4}{d-2},$$ 
then the rescaled function
\begin{equation}
 \frac{1}{\mu^{\frac1{q-1}}}u_\mu\left(\frac{x}{\sqrt\mu}\right)
 \label{eq:rescaled_subcritical}
\end{equation}
converges strongly in $H^1(\R^d)\cap L^\ii(\R^d)$ in the limit $\mu\to0$ to the function $Q$ which is the unique positive radial-decreasing solution to the nonlinear Schr\"odinger (NLS) equation
\begin{equation}\label{limiteq0}
\Delta Q +Q^q-Q=0.
\end{equation}
We have
\begin{multline}
 M(\mu)=\mu^{\frac{4+d-dq}{2(q-1)}}\int_{\R^d}Q^2
 +\frac{2(p-1)+4+d-dq}{(p+1)(q-1)}\mu^{\frac{2(p-q)+4+d-dq}{2(q-1)}}\int_{\R^d}Q^{p+1}\\+o\left(\mu^{\frac{2(p-q)+4+d-dq}{2(q-1)}}\right)_{\mu\searrow0}
 \label{eq:limit_M_mu_0}
\end{multline}
and
\begin{multline}
M'(\mu)=\frac{4+d-dq}{2(q-1)}\mu^{\frac{4+d-dq}{2(q-1)}-1}\int_{\R^d}Q^2\\
 +\frac{(2(p-1)+4+d-dq)(2(p-q)+4+d-dq)}{2(p+1)(q-1)^2}\mu^{\frac{2(p-q)+4+d-dq}{2(q-1)}-1}\int_{\R^d}Q^{p+1}\\+o\left(\mu^{\frac{2(p-q)+4+d-dq}{2(q-1)}-1}\right)_{\mu\searrow0}.
\label{eq:limit_M_derivative_mu_0}
\end{multline}
In particular, $M$ is increasing for $q\leq 1+4/d$ and decreasing for $q>1+4/d$, in a neighborhood of the origin.

\bigskip

\noindent $\bullet$ \emph{\bf (Critical case)} If $d\geq3$ and 
$$q=1+\frac{4}{d-2},$$ 
then the rescaled function
\begin{equation}
\frac{1}{\eps_\mu^{\frac{d-2}{2}}}u_\mu\left(\frac{x}{\eps_\mu}\right) 
 \label{eq:rescaled_critical}
\end{equation}
converges strongly in $\dot{H}^1(\R^d)\cap L^\ii(\R^d)$ in the limit $\mu\to0$ to the Sobolev optimizer 
$$S(x)=\left(1+\frac{|x|^2}{d(d-2)}\right)^{-\frac{d-2}{2}},$$ 
which is also the unique positive radial-decreasing solution (up to dilations) to the Emden-Fowler equation
$\Delta S +S^q=0,$
where
\begin{equation}
\eps_\mu\sim c\begin{cases}
\mu^{\frac{1}{p-3}}&\text{if $d=3$,}\\
\left(\mu\log\mu^{-1}\right)^{\frac{1}{p-1}}&\text{if $d=4$,}\\
\mu^{\frac{q-1}{2(p-1)}}&\text{if $d\geq5$.}\\
\end{cases}
\label{eq:def_eps_mu}
\end{equation}
Furthermore, we have 
\begin{equation}
\lim_{\mu\searrow0} M(\mu) =\lim_{\mu\searrow0}-M'(\mu)=\ii.
 \label{eq:limit_M_mu_0_critical}
\end{equation}
In particular, $M$ is decreasing in a neighborhood of the origin.

\bigskip

\noindent $\bullet$ \emph{\bf (Super-critical case)} If $d\geq3$ and 
$$q>1+\frac{4}{d-2},$$ 
then $u_\mu$ converges strongly in $\dot{H}^1(\R^d)\cap L^\ii(\R^d)$ in the limit $\mu\to0$ to the unique positive radial-decreasing solution $u_0\in \dot{H}^1(\R^d)\cap L^{p+1}(\R^d)$ of the `zero-mass' double-power equation
$$-\Delta u_0=-u_0^p+u_0^q$$
decaying like $u_0(x)=O(|x|^{2-d})$ at infinity. We have the limits
\begin{equation}
\lim_{\mu\searrow0} M(\mu)=\int_{\R^d}u_0(x)^2\,dx\begin{cases}
=\ii&\text{if $d\in\{3,4\}$,}\\
<\ii&\text{if $d\geq5$}
\end{cases}
 \label{eq:limit_M_mu_0_supercritical}
\end{equation}
and 
\begin{equation}
\lim_{\mu\searrow0} M'(\mu)=\begin{cases}
-\ii&\text{if $d\in\{3,4,5,6\}$,}\\
  M'(0)\in\R&\text{if $d\geq7$.}
\end{cases}
 \label{eq:limit_derivative_M_mu_0_supercritical}
\end{equation}
In particular, $M$ is decreasing in a neighborhood of the origin when $d\in\{3,...,6\}$. 
In dimensions $d\geq7$, we have $M'(0)<0$ under the additional condition
\begin{equation}
1+\frac{4}{d-2}<q<p<1+\frac{4}{d-2}+\frac{32}{d(d-2)\big((d-2)q-d-2\big)}.
\label{eq:ugly_condition_on_p}
\end{equation}
\end{theorem}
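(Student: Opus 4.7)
The overall plan is identical in each regime: rescale $u_\mu$ so that its $L^\infty$ norm is of order one, identify the limiting profile, and perform a perturbation expansion whose key ingredient is the non-degeneracy of the limit (from Theorem~\ref{thmuniqnondeg} or its analog for the limiting equation), giving invertibility of the linearized operator on the radial sector. The mass $M(\mu)$ is then read off from the expansion, and the scaling prefactor accounts for the dichotomy between the three regimes.

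\textbf{Sub-critical case.} I would introduce $v_\mu(y) := \mu^{-1/(q-1)} u_\mu(y/\sqrt\mu)$, which solves $-\Delta v_\mu + v_\mu = v_\mu^q - \epsilon\, v_\mu^p$ with $\epsilon := \mu^{(p-q)/(q-1)}\to 0$. Uniform $H^1\cap L^\infty$ bounds (via Pohozaev and radial Strauss-type estimates) yield a subsequential limit which by Kwong's uniqueness theorem must be the NLS ground state $Q$ of $-\Delta Q + Q = Q^q$. Writing $v_\mu = Q + \epsilon\phi_\mu + o(\epsilon)$, the correction satisfies $\cL_+ \phi_\mu = -Q^p + o(1)$ where $\cL_+ := -\Delta + 1 - qQ^{q-1}$ is invertible on radial functions by non-degeneracy of $Q$. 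The crucial integral $\int Q\phi_\mu$ is then computed using the scaling identity $\cL_+\bigl(\tfrac{1}{q-1} Q + \tfrac12 x\cdot\nabla Q\bigr) = -Q$, obtained by differentiating $Q_\lambda := \lambda^{1/(q-1)} Q(\sqrt\lambda\,\cdot)$ at $\lambda=1$, combined with an integration by parts on $\int x\cdot\nabla(Q^{p+1})$. Plugging into $M(\mu) = \mu^{(4+d-dq)/(2(q-1))}\int v_\mu^2$ yields \eqref{eq:limit_M_mu_0}, and \eqref{eq:limit_M_derivative_mu_0} follows by differentiation of the expansion, justified by the real-analyticity of $M$ on $(0,\mu_*)$.

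\textbf{Critical case.} Here $q=(d+2)/(d-2)$ and $u_\mu$ concentrates. I would set $u_\mu(y) = \epsilon_\mu^{(d-2)/2} v_\mu(\epsilon_\mu y)$, giving
\[-\Delta v_\mu = v_\mu^q - \mu\epsilon_\mu^{-2}\,v_\mu - \epsilon_\mu^{(p-1)(d-2)/2 - 2} v_\mu^p.\]
The scale $\epsilon_\mu$ is fixed by a Pohozaev-type matching of the two perturbations, producing three sub-cases: for $d\geq 5$, $\epsilon_\mu\sim c\mu^{(q-1)/(2(p-1))}$; for $d=4$, a logarithm enters from the borderline divergence of $\int S^2$; for $d=3$, the slow decay $S\sim|x|^{-1}$ forces $\epsilon_\mu\sim c\mu^{1/(p-3)}$. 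Once $v_\mu\to S$ in $\dot H^1\cap L^\infty$, the identity $M(\mu) = \epsilon_\mu^{-2}\int v_\mu^2$ shows $M(\mu)\to\ii$: either the prefactor $\epsilon_\mu^{-2}$ diverges ($d\geq 5$, with $\int S^2<\ii$) or $\int v_\mu^2$ itself diverges from the non-$L^2$ tail of $S$ ($d=3,4$). A further derivative at the level of the expansion gives $M'(\mu)\to-\ii$.

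\textbf{Super-critical case and main obstacle.} Here $u_\mu$ does not concentrate and converges in $\dot H^1\cap L^\infty$ to the unique positive radial solution $u_0\in\dot H^1\cap L^{p+1}$ of $-\Delta u_0 = -u_0^p + u_0^q$; existence, uniqueness, and non-degeneracy of $u_0$ follow by applying the strategy of Theorem~\ref{thmuniqnondeg} to $g_0(u) := u^q - u^p$ with $\beta = 1$. An ODE analysis at infinity gives the tail $u_0(x)\sim C|x|^{2-d}$, so $u_0\in L^2(\R^d)$ iff $d\geq 5$, proving \eqref{eq:limit_M_mu_0_supercritical}. For the derivative, differentiating the equation in $\mu$ gives $\cL_\mu \dot u_\mu = -u_\mu$, hence $M'(\mu) = -2\pscal{u_\mu,\cL_\mu^{-1}u_\mu}_{\rm rad}$, and the condition \eqref{eq:ugly_condition_on_p} is tailored precisely so that both $u_0$ and $\cL_0^{-1}u_0$ lie in $L^2$ with enough decay to pass to the limit, with $M'(0)<0$ following from positivity of $\cL_0^{-1}$ on the radial sector. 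The hardest step is the critical case: pinning down $\epsilon_\mu$ in each dimension and controlling $v_\mu - S$ in a topology strong enough to derive the $L^2$-level asymptotics of $M$ and $M'$; the borderline dimensions $d=3,4$ require weighted estimates that mirror the classical Brezis--Nirenberg analysis. In the super-critical case, verifying the sign of $M'(0)$ under the sharp condition \eqref{eq:ugly_condition_on_p} similarly demands weighted decay estimates on $\cL_0^{-1}u_0$.
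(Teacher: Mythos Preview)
Your sub-critical analysis matches the paper's approach (implicit function theorem around $Q$, scaling identity for $(\cL_Q)^{-1}_{\rm rad}Q$). However, your justification of the $M'$ expansion by ``differentiation of the expansion, justified by real-analyticity'' is not sound: real-analyticity of $M$ on the \emph{open} interval $(0,\mu_*)$ says nothing about term-by-term differentiability of an asymptotic expansion at the boundary point $\mu=0$. The paper instead computes $M'(\mu)=-2\pscal{u_\mu,(\cL_\mu)^{-1}_{\rm rad}u_\mu}$ directly after rescaling, using a second-order resolvent expansion of the rescaled linearized operator around $(\cL_Q)^{-1}$, and then verifies \emph{a posteriori} that the resulting coefficients agree with the formal derivative of the $M$ expansion.

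More seriously, your treatment of $M'$ in the critical and super-critical cases contains two genuine errors. First, $(\cL_0)^{-1}_{\rm rad}$ is \emph{not} positive: $\cL_0$ inherits exactly one negative eigenvalue from $\cL_\mu$ (the paper checks $\pscal{u_0',\cL_0 u_0'}<0$), so the sign of $M'(0)=-2\pscal{u_0,(\cL_0)^{-1}_{\rm rad}u_0}$ cannot be read off spectrally. Second, condition~\eqref{eq:ugly_condition_on_p} has nothing to do with $L^2$-integrability of $u_0$ or $(\cL_0)^{-1}_{\rm rad}u_0$; those are purely dimensional thresholds ($d\geq5$ and $d\geq7$ respectively, coming from the decay $u_0\sim|x|^{2-d}$ and a Birman--Schwinger comparison of $(\cL_0)^{-1}_{\rm rad}$ with $(-\Delta)^{-1}$). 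The paper's actual mechanism for both $M'\to-\ii$ in the critical case and $M'(0)<0$ under~\eqref{eq:ugly_condition_on_p} is a finite-dimensional determinant argument (Lemma~\ref{lem:M_derivative}): one computes the $3\times3$ Gram matrix of $\cL_\mu$ on ${\rm span}\{u_\mu,\partial_\mu u_\mu,(x\cdot\nabla+\tfrac d2)u_\mu\}$ via Pohozaev-type identities, and the fact that $\cL_\mu$ has a single negative eigenvalue forces this determinant to be negative. This yields an explicit algebraic upper bound on $M'(\mu)$ in terms of $M(\mu)$, $T(\mu)=\int|\nabla u_\mu|^2$ and $\beta(\mu)=T(\mu)^{-1}\int u_\mu^{p+1}$, and condition~\eqref{eq:ugly_condition_on_p} is precisely what makes this bound negative once $\beta(\mu)\to\beta(0)$. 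Without this lemma your outline has no way to access the sign of $M'$ in these regimes.
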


The convergence properties of $u_\mu$ are taken from~\cite{MorMur-14} in all cases. Only the behavior of $M$ and $M'$ is new. The corresponding proof is given below in Section~\ref{sec:proof_mu_0}.  

The condition~\eqref{eq:ugly_condition_on_p} in the super-critical case is not at all expected to be optimal and it is only provided as an illustration. This condition requires that 
$$1+\frac{4}{d-2}<q<1+\frac{4}{d-2}+\frac{4\sqrt2}{\sqrt{d}(d-2)}$$
and that $p$ satisfies the right inequality in~\eqref{eq:ugly_condition_on_p}. In particular, $p$ can be arbitrarily large when $q$ approaches the critical exponent. Although  we are able to prove that $M'$ admits a finite limit when $\mu\to0$ in dimensions $d\geq7$, we cannot determine its sign in the whole range of parameter. Numerical simulations provided below in Section~\ref{sec:conjecture} seem to indicate that $M'(0)$ can be positive. The limit $\mu\to0$ for $M'(\mu)$ is quite delicate in the super-critical case, since the limiting linearized operator 
$$\cL_0=-\Delta+p(u_0)^{p-1}-q(u_0)^{q-1}$$ 
has no gap at the origin. Its essential spectrum starts at $0$. Nevertheless, we show in Appendix~\ref{app:u_0} that $u_0$ is still \emph{non-degenerate} in the sense that $\ker\left(\cL_0\right)={\rm span}\left\{\partial_{x_1}u_0,...,\partial_{x_d}u_0\right\}$.
This allows us to define $(\cL_0)_{\rm rad}^{-1}$ by the functional calculus and to prove that, as expected,
$$M'(0)=-2\pscal{u_0,(\cL_0)_{\rm rad}^{-1}u_0},$$
where the right side is interpreted in the sense of quadratic forms. In dimensions $d\geq5$ there are no resonances and $(\cL_0)_{\rm rad}^{-1}$ essentially behaves like $(-\Delta)^{-1}_{\rm rad}$ at low momenta~\cite{Jensen-80}. Since $\pscal{u_0,(-\Delta)^{-1}u_0}$ is finite only in dimensions $d\geq7$ due to the slow decay of $u_0$ at infinity, $M'(0)$ is only finite in those dimensions. In dimensions $d\in\{7,8\}$ it is the second derivative $M''(\mu)$ which diverges to $+\ii$ when $\mu\to0$ (see Remark~\ref{rmk:higher_derivatives}) but this does not tell us anything about the variations of $M$. 

\subsubsection{The limit $\mu\nearrow\mu_*$}
Next we study the behavior of the branch of solutions in the other limit $\mu\nearrow\mu_*$.

\begin{theorem}[Behavior when $\mu\nearrow\mu_*$]\label{thm:limit_mu_star}
Let $d\geq2$ and $p>q>1$. Let $\mu_*$ and $\beta_*$ be the two critical constants defined in~\eqref{eq:mu_star} and~\eqref{eq:def_beta_*}, respectively. Then we have
\begin{equation}
\boxed{\lim_{\mu\nearrow\mu_*}(\mu_*-\mu)^dM(\mu)=\lim_{\mu\nearrow\mu_*}\frac{(\mu_*-\mu)^{d+1}}{d}M'(\mu)=\Lambda}
\label{eq:limit_M_mu_infty}
\end{equation}
where
\begin{equation}
\Lambda:=2^{\frac{3d}2}\frac{|\bS^{d-1}|}{d}(\beta_*)^{2(1-d)}(d-1)^{d}\left(\int_{0}^{\beta_*}|G_{\mu_*}(s)|^{\frac12}\,ds\right)^d.
\label{eq:def_Lambda}
\end{equation}
Let $\gamma\in(0,\beta_*)$ be any constant and call $R_\mu$ the unique radius such that $u_\mu(R_\mu)=\gamma$. Then we have 
\begin{equation}
R_\mu=\frac{\rho}{\mu_*-\mu}+o\left(\frac1{\mu_*-\mu}\right),\qquad \rho=\frac{2\sqrt2(d-1)}{\beta_*^2}\int_0^{\beta_*} \sqrt{|G_{\mu_*}(s)|}\,ds, 
 \label{eq:formula_R_mu}
\end{equation}
and the uniform convergence
\begin{equation}
\lim_{\mu\to\mu_*}\norm{u_\mu-U_*(|x|-R_\mu)}_{L^\ii(\R^d)}=0,
\label{eq:limit_mu_mu_*}
\end{equation}
where $U_*$ is the unique solution to the one-dimensional limiting problem 
\begin{equation}
 \begin{cases}
U_*''+g_{\mu_*}(U_*)=0&\text{on $\R$}\\
U_*(-\ii)=\beta_*\\
U_*(+\ii)=0\\
U_*(0)=\gamma\in(0,\beta_*.)
  \end{cases}
 \label{eq:U}
\end{equation}
\end{theorem}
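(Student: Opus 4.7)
The guiding picture is that $u_\mu$ develops a \emph{flat-top bubble profile} as $\mu \nearrow \mu_*$: a nearly constant plateau of height $\beta_*$ on a large ball $\{|x|\le R_\mu\}$ with $R_\mu \to \infty$, a one-dimensional transition layer of bounded width around $r=R_\mu$ driven by $U_*$, and rapid decay outside. All three assertions of the theorem follow once this picture is justified quantitatively.

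I would begin by establishing the one-dimensional profile $U_*$. Since $\beta_*$ is a \emph{double} zero of $G_{\mu_*}$, the heteroclinic orbit of $U''+g_{\mu_*}(U)=0$ connecting $\beta_*$ at $-\infty$ to $0$ at $+\infty$ lives on the zero energy level, so $U_*'=-\sqrt{-2G_{\mu_*}(U_*)}$; this integrates uniquely and one fixes the translation by $U_*(0)=\gamma$. The linearization at $\beta_*$, where $g_{\mu_*}'(\beta_*)<0$, forces exponential convergence of $U_*$ to $\beta_*$ as $s\to-\infty$, and similarly at $+\infty$ to $0$. Next, the radial energy identity
\[
G_\mu(u_\mu(0))\;=\;(d-1)\int_0^\infty\frac{(u_\mu'(r))^2}{r}\,dr
\]
is the workhorse. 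Its left-hand side tends to $G_{\mu_*}(\beta_*)=0$ by Theorem~\ref{thmpowernl}, which forces $R_\mu\to\infty$ (otherwise the right-hand side would remain bounded below by a positive constant). Writing $v_\mu(s):=u_\mu(R_\mu+s)$, the friction coefficient $(d-1)/(R_\mu+s)$ vanishes and $g_\mu\to g_{\mu_*}$, so continuous dependence of ODE solutions on parameters combined with the pinning $v_\mu(0)=\gamma$ delivers $v_\mu\to U_*$ locally uniformly. Combined with a maximum-principle comparison to $\beta_*$, this upgrades to $u_\mu\to\beta_*$ uniformly on $[0,R_\mu-A]$ for every fixed $A>0$, proving~\eqref{eq:limit_mu_mu_*}. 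To pin down the rate, expand both sides: using $G_\mu(u)=G_{\mu_*}(u)+\tfrac{1}{2}(\mu_*-\mu)u^2$ together with $|u_\mu(0)-\beta_*|=O(e^{-cR_\mu})$ (which follows from the boundary condition $v_\mu'(-R_\mu)=0$ and the exponential convergence of $U_*$ to $\beta_*$ at $-\infty$), the LHS equals $\tfrac{1}{2}\beta_*^2(\mu_*-\mu)+o(\mu_*-\mu)$; the RHS, exponentially small outside the transition layer, reduces to
\[
\frac{d-1}{R_\mu}\int_\R(U_*'(s))^2\,ds+o(R_\mu^{-1}) = \frac{(d-1)\sqrt 2}{R_\mu}\int_0^{\beta_*}|G_{\mu_*}(u)|^{1/2}\,du+o(R_\mu^{-1}),
\]
via $(U_*')^2=-2G_{\mu_*}(U_*)$ and the change of variables $u=U_*(s)$. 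Equating the two yields~\eqref{eq:formula_R_mu}.

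The mass asymptotic is now immediate. Splitting $M(\mu)=|\bS^{d-1}|\int_0^{R_\mu}u_\mu^2 r^{d-1}\,dr+|\bS^{d-1}|\int_{R_\mu}^\infty u_\mu^2 r^{d-1}\,dr$, the first integral equals $|\bS^{d-1}|\beta_*^2 R_\mu^d/d\cdot(1+o(1))$ by the uniform interior convergence, while the second is $O(R_\mu^{d-1})$ by the $L^2$-integrability of $U_*$ on $\R_+$ (exponential decay at $+\infty$). Substituting $R_\mu^d\sim\rho^d(\mu_*-\mu)^{-d}$ gives $(\mu_*-\mu)^d M(\mu)\to|\bS^{d-1}|\beta_*^2\rho^d/d=\Lambda$.

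The derivative $M'(\mu)$ is the main obstacle, since one cannot legitimately differentiate the leading-order expansion of $M$ without controlling the derivative of the remainder. My preferred route is to start from $M'(\mu)=2\int u_\mu\,\partial_\mu u_\mu=-2\langle u_\mu,(\cL_\mu)^{-1}_{\rm rad}u_\mu\rangle$ and justify, to leading order, the approximation
\[
\partial_\mu u_\mu(x)\;\approx\;-\,U_*'(|x|-R_\mu)\,R_\mu'(\mu),
\]
whose intuition is that the $\mu$-dependence of the bubble is dominated by the $\mu$-dependence of its radius. A direct computation based on this ansatz, using the concentration of $\partial_\mu u_\mu$ in the transition layer and the identity $\int_\R U_*(s)\,(-U_*'(s))\,ds=\beta_*^2/2$, gives
\[
M'(\mu)\sim |\bS^{d-1}|\beta_*^2 R_\mu^{d-1}\, R_\mu'(\mu) \sim \frac{|\bS^{d-1}|\beta_*^2\rho^d}{(\mu_*-\mu)^{d+1}} = \frac{d\Lambda}{(\mu_*-\mu)^{d+1}},
\]
where the scaling $R_\mu'(\mu)\sim R_\mu^2/\rho$ follows from differentiating the asymptotic for $R_\mu$. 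The rigorous justification of the approximation for $\partial_\mu u_\mu$ is the main technical difficulty: it requires analyzing the equation $\cL_\mu(\partial_\mu u_\mu)=-u_\mu$ in the bubble regime, for instance through a Lyapunov--Schmidt decomposition along the approximate breathing mode $-U_*'(|x|-R_\mu)$ (which is an approximate near-kernel direction of $\cL_\mu$, associated with the ``soft'' mode of shifting $R_\mu$), together with a spectral analysis showing that on its $L^2$-orthogonal complement $\cL_\mu$ is bounded below by a positive constant uniformly in $\mu$ close to $\mu_*$.
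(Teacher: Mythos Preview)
Your proposal is essentially correct and follows the paper's approach: the energy identity $G_\mu(u_\mu(0))=(d-1)\int_0^\infty (u_\mu')^2/r\,dr$, exponential bounds via the maximum principle, local convergence of $v_\mu$ to $U_*$, and the mass splitting are all as in the paper. Your route to the $R_\mu$ asymptotic via expanding both sides of the energy identity is actually slightly more direct than the paper's (which integrates a Pohozaev-type relation instead), but both arrive at the same formula.

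For $M'(\mu)$ there is a gap worth naming. You correctly identify the framework (decomposition along the approximate kernel mode $-U_*'(|x|-R_\mu)$, uniform spectral gap on its complement), but your heuristic computation invokes $R_\mu'(\mu)\sim R_\mu^2/\rho$ obtained by differentiating the asymptotic for $R_\mu$---precisely the move you rightly flag as illegitimate when applied to $M$. The paper avoids this circularity by never touching $R_\mu'$: instead it computes the first eigenvalue of $\cL_\mu$ precisely,
\[
\lambda_1(\mu)=-\frac{d-1}{R_\mu^2}+o(R_\mu^{-2}),
\]
which is the quantitative input that replaces your $R_\mu'(\mu)$ in the final answer. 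The upper bound comes from testing against $u_\mu'$ (which satisfies $\cL_\mu u_\mu'=-(d-1)u_\mu'/r^2$), and the matching lower bound from the operator inequality $\cL_\mu+(d-1)/|x|^2\ge0$ applied to the true first eigenfunction. With $\lambda_1(\mu)$ in hand one writes $M'(\mu)=-2\langle u_\mu,\cL_\mu^{-1}u_\mu\rangle=-2|\langle u_\mu,\phi_\mu\rangle|^2/\lambda_1(\mu)+O(R_\mu^d)$ directly via the spectral decomposition, and the $R_\mu'$ heuristic is never needed. Your Lyapunov--Schmidt description would ultimately require the same eigenvalue asymptotic to solve for the coefficient along $\phi_\mu$, so this is the missing ingredient rather than a different method.
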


The proof will be provided later in Section~\ref{sec:proof_mu_star}. What the result says is that $u_\mu$ ressemble a \emph{radial translation} of the one-dimensional solution $U_*$, which links the two unstable stationary solutions $\beta_*$ and $0$ of the underlying Hamiltonian system. Since $U_*$ tends to $\beta_*$ at $-\ii$, we see that $u_\mu(r)$ tends to $\beta_*$ for every fixed $r$, as we claimed earlier, and this is why the mass diverges like 
$$M(\mu)\underset{\mu\to\mu_*}\sim(R_\mu)^d(\beta_*)^2\frac{|\bS^{d-1}|}{d}.$$ 
Plugging the asymptotics of $R_\mu$ from~\eqref{eq:formula_R_mu} then provides~\eqref{eq:limit_M_mu_infty}.
Stronger convergence properties of $u_\mu$ will be given in the proof. For instance we have for the derivatives
$$\lim_{\mu\to\mu_*}\norm{u'_\mu-U_*'(|x|-R_\mu)}_{L^\ii(\R^d)}=\lim_{\mu\to\mu_*}\int_0^\ii\left|u'_\mu(r)-U'_*(r-R_\mu)\right|^p\,dr=0$$
for all $1\leq p<\ii$. On the other hand we only have 
$$\|u'_\mu-U_*'(|x|-R_\mu)\|_{L^p(\R^d)}=o(R_\mu^{\frac{d-1}{2}})$$ 
due to the volume factor $r^{d-1}{\rm d}r$. 

Upper and lower bounds on $M(\mu)$ in terms of $(\mu_*-\mu)^{-d}$ were derived in~\cite{KilOhPocVis-17} in the case $d=3$, $p=5$ and $q=3$ but the exact limit~\eqref{eq:limit_M_mu_infty} is new, to our knowledge. Particular sequences $\mu_n\to\mu_*$ have been studied in~\cite{AkaKikYam-18}. That the solution $u_\mu$ tends to a constant in the limit of a large mass is a `saturation phenomenon' which plays an important role in Physics, for instance for infinite nuclear matter~\cite{MerIsi-88}. 

Theorem~\ref{thm:limit_mu_star} implies that $M$ is always increasing close to $\mu_*$, hence in this region we obtain an orbitally stable branch for the Schr\"odinger flow, for every $p>q>1$. 

\begin{remark}[A general result]
Our proof of Theorem~\ref{thm:limit_mu_star} is general and works the same for a function in the form $g_\mu(u)=g_0(u)-\mu u$ with
\begin{itemize}
\item $g_0\in C^1([0,\ii))\cap C^2(0,\ii)$ with $g_0(0)=g_0'(0)=0$ and $g_0(s)\to-\ii$ when $s\to+\ii$;
 \item $g_\mu$ has exactly two roots $0<\alpha_\mu<\beta_\mu$ on $(0,\ii)$ with $g_\mu'(\alpha_\mu)>0$ and $g_\mu'(\beta_\mu)<0$ for all $\mu\in(0,\mu_*]$ where $\mu_*$ is the first $\mu$ so that $G_\mu(r)=\int_0^rg_\mu(s)\,ds\leq0$ for all $r\geq0$;
 \item $\Delta u+g_\mu(u)=0$ has a unique non-degenerate radial positive solution for every $\mu\in(0,\mu_*)$ (for instance $g_\mu$ satisfies \textnormal{(H2)} in Theorem~\ref{thmuniqnondeg} for all $\mu\in(0,\mu_*)$). 
\end{itemize}
\end{remark}

\subsubsection{Main conjecture and numerical illustration}\label{sec:conjecture}

Theorems~\ref{thm:limit_mu_0} and~\ref{thm:limit_mu_star} and the fact that $M$ is a smooth function on $(0,\mu_*)$ imply some properties of solutions to the equation $M(\mu)=\lambda$, whenever $\lambda$ is either small or large. Those are summarized in the following

\begin{corollary}[Number of solutions to $M(\mu)=\lambda$]\label{cor:equation_M_lambda}
Let $d\geq2$ and $p>q>1$. The equation 
$$M(\mu)=\lambda$$

\medskip

\noindent$\bullet$ admits a \emph{unique solution} $\mu$ for $\lambda$ small enough when $1<q<1+\frac{4}{d}$, and it is stable, $M'(\mu)>0$;

\medskip

\noindent$\bullet$ admits a \emph{unique solution} $\mu$ for $\lambda$ large enough when 
$$\begin{cases}
\text{$1<q\leq1+\frac4d$,}\\
\text{$q>1+\frac{4}{d-2}$ and $d\geq5$,}   
  \end{cases}
$$
and it is stable, $M'(\mu)>0$;

\medskip

\noindent $\bullet$ admits \emph{exactly two} solutions $\mu_1<\mu_2$ for $\lambda$ large enough when 
$$\begin{cases}
\text{$q>1+\frac{4}{d}$ and $d\in\{2,3,4\}$,}\\
\text{$1+\frac4d<q\leq 1+\frac{4}{d-2}$ and $d\geq5$,}\\
\end{cases}
$$
which are respectively unstable and stable: $M'(\mu_1)<0$, $M'(\mu_2)>0$.
\end{corollary}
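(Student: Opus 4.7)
The plan is to read the corollary off the endpoint asymptotics of $M$ and $M'$ provided by Theorems~\ref{thm:limit_mu_0} and~\ref{thm:limit_mu_star}, together with the fact that $M$ is real-analytic, hence continuous, on $(0,\mu_*)$. For each bullet I would identify an interval near one of the endpoints on which $M$ is a strict monotone bijection onto an unbounded ray, show that $M$ is bounded (above or below, as appropriate) on the complementary compact subinterval, and then simply count preimages. The signs of $M'$ at those preimages are read off from the direction of monotonicity.

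First I would tabulate the boundary asymptotics. From Theorem~\ref{thm:limit_mu_star} one always has $M(\mu)\to+\infty$ and $M'(\mu)\to+\infty$ as $\mu\to\mu_*^-$, so there is $\delta>0$ for which $M$ is strictly increasing on $[\mu_*-\delta,\mu_*)$ with image $[M(\mu_*-\delta),+\infty)$. At $\mu\to0^+$, Theorem~\ref{thm:limit_mu_0} provides four regimes: (a) if $q<1+4/d$, $M\to 0$ and $M'\to+\infty$; (b) if $q=1+4/d$, $M\to\int_{\R^d}Q^2\in(0,\infty)$; (c) if $q>1+4/(d-2)$ and $d\ge5$, $M\to\int_{\R^d}u_0^2\in(0,\infty)$; (d) in all remaining cases ($1+4/d<q\le1+4/(d-2)$ in any admissible dimension, or $q>1+4/(d-2)$ with $d\in\{3,4\}$), $M\to+\infty$ and $M'\to-\infty$, so some $\epsilon>0$ exists with $M$ strictly decreasing on $(0,\epsilon]$ and image $[M(\epsilon),+\infty)$. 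Granted this table, the three bullets reduce to elementary counts. For bullet one (small $\lambda$, regime (a)), the increasing bijection $M:(0,\epsilon]\to(0,M(\epsilon)]$ and the strict positivity $m:=\inf_{[\epsilon,\mu_*)}M>0$ (from continuity on a compact piece plus divergence at $\mu_*$) give, for every $0<\lambda<\min(m,M(\epsilon))$, a unique preimage in $(0,\epsilon)$ at which $M'>0$. For bullet two (unique for large $\lambda$, regimes (a), (b), (c)), $M$ is bounded on $(0,\mu_*-\delta]$ by some constant $C$, and for $\lambda>\max(C,M(\mu_*-\delta))$ the increasing bijection near $\mu_*$ furnishes the unique preimage, with $M'>0$. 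For bullet three (regime (d)), $M$ is unbounded at both endpoints with opposite local monotonicities, and bounded by some $M_{\max}$ on the compact middle $[\epsilon,\mu_*-\delta]$; for $\lambda>\max(M_{\max},M(\epsilon),M(\mu_*-\delta))$ one obtains exactly one preimage $\mu_1\in(0,\epsilon)$ with $M'(\mu_1)<0$, exactly one preimage $\mu_2\in(\mu_*-\delta,\mu_*)$ with $M'(\mu_2)>0$, and none in between.

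The main (and truly the only) obstacle is a bookkeeping verification that the hypotheses in each bullet of the corollary match exactly one of the regimes (a)--(d) above, with no overlap and no gap. The key observations are that $1+4/(d-2)>1+4/d$ whenever $d\ge3$ (so the Sobolev-critical exponent lies strictly past the mass-critical one, and the boundary case $q=1+4/(d-2)$ falls into regime (d)); and that the Sobolev-super-critical range $q>1+4/(d-2)$ splits according to whether $u_0\in L^2(\R^d)$, i.e. according to $d\ge5$ (regime (c), matching the `unique for large $\lambda$' bullet) versus $d\in\{3,4\}$ (regime (d), matching the `exactly two' bullet). All the analytic content already sits inside Theorems~\ref{thm:limit_mu_0} and~\ref{thm:limit_mu_star}; the corollary itself is a pure continuity-and-monotonicity argument stacked on top of them.
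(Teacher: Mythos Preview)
Your argument is correct and is exactly the elaboration the paper has in mind: the corollary is stated there simply as a consequence of Theorems~\ref{thm:limit_mu_0} and~\ref{thm:limit_mu_star} together with the smoothness of $M$ on $(0,\mu_*)$, and your scheme (monotone bijection near the relevant endpoint, boundedness on the complementary compact piece, then count preimages and read off the sign of $M'$) is precisely the intended reading. One harmless slip: in regime~(a) you assert $M'\to+\infty$, but the leading term $\frac{4+d-dq}{2(q-1)}\mu^{\frac{4+d-dq}{2(q-1)}-1}\int Q^2$ from~\eqref{eq:limit_M_derivative_mu_0} actually tends to $0$ when $q<1+\tfrac{4}{d+2}$; since your argument only uses that $M'>0$ near the origin, the proof is unaffected.
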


Now that we have determined the exact behavior of $M$ at the two end points of its interval of definition, it seems natural to expect that the following holds true. 

\begin{conjecture}[Behavior of $M$]\label{conjecture:M}
Let $d\geq2$ and $p>q>1$. Then $M'$ is either positive on $(0,\mu_*)$, or vanishes at a unique $\mu_c\in(0,\mu_*)$ with
\begin{equation}
 M'\begin{cases}
<0&\text{on $(0,\mu_c)$,}\\
>0&\text{on $(\mu_c,\mu_*)$.}
\end{cases}
 \label{eq:conjecture_M'}
\end{equation}
More precisely:
\begin{itemize}
 \item[$\bullet$] If $q\leq 1+4/d$, then $M'>0$ on $(0,\mu_*)$.
 \item[$\bullet$] If $d\in\{2,...,6\}$ and $q>1+4/d$, or if $d\geq7$ and $1+4/d<q\leq 1+4/(d-2)$, then $M'$ vanishes exactly once. 
 \item[$\bullet$] If $d\geq7$ and $q>1+4/(d-2)$, there exists a $p_c(q)\geq q$ such that $M'$ vanishes once for $q<p< p_c(q)$ and does not vanish for $p> p_c(q)$.
\end{itemize}
\end{conjecture}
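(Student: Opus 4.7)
Since $M$ is real analytic on $(0,\mu_*)$ and Theorem~\ref{thm:limit_mu_star} already guarantees $M'(\mu)>0$ near $\mu_*$, the conjectured sign pattern~\eqref{eq:conjecture_M'} would follow if I could prove that every critical point $\mu_0$ of $M$ is automatically a strict local minimum, i.e.\ $M''(\mu_0)>0$: a second zero of $M'$ would force a local maximum with $M''\leq 0$ in between. Writing $g_0(u)=u^q-u^p$ so that $g_\mu(u)=g_0(u)-\mu u$, the non-degeneracy~\eqref{eq:non_degenerate2} allows one to invert $\cL_\mu$ on the radial sector; setting $v_\mu=\partial_\mu u_\mu$ and $w_\mu=\partial_\mu^{2}u_\mu$ one finds $\cL_\mu v_\mu=-u_\mu$ and $\cL_\mu w_\mu=-2v_\mu+g_0''(u_\mu)v_\mu^{2}$. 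A direct computation starting from $M(\mu)=\int u_\mu^2$ then yields
\begin{equation*}
M'(\mu)=2\int_{\R^d} u_\mu v_\mu\,dx,\qquad
M''(\mu)=6\int_{\R^d} v_\mu^{2}\,dx-2\int_{\R^d} g_0''(u_\mu)\,v_\mu^{3}\,dx.
\end{equation*}
The problem thus reduces to proving the quantitative bound $\int g_0''(u_{\mu_0}) v_{\mu_0}^{3}< 3\int v_{\mu_0}^{2}$ whenever $\pscal{u_{\mu_0},v_{\mu_0}}=0$.

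\medskip

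At such a critical $\mu_0$ three extra pieces of information become available, which I would exploit in sequence. First, testing $\cL_\mu v_\mu=-u_\mu$ against $v_\mu$ gives $\pscal{v_\mu,\cL_\mu v_\mu}=0$, i.e.\
\begin{equation*}
\int_{\R^d}|\nabla v_\mu|^2\,dx+\mu\int_{\R^d}v_\mu^{2}\,dx=\int_{\R^d}\bigl(qu_\mu^{q-1}-pu_\mu^{p-1}\bigr)v_\mu^{2}\,dx,
\end{equation*}
localizing $v_\mu$ in the region $\{g_0'(u_\mu)>0\}$. Second, under the non-degeneracy of Theorem~\ref{thmpowernl}, standard Sturm oscillation arguments for the radial operator $(\cL_\mu)_{\rm rad}$ should give Morse index exactly one; combined with $\pscal{u,v}=0$ and $\pscal{v,\cL v}=0$ this constrains $v_\mu$ to live in a rigid two-dimensional cone of the spectral decomposition of $\cL_\mu$. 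Third, the Pohozaev and Nehari identities for $u_\mu$, together with their $\mu$-derivatives, yield linear relations among the moments $\int u_\mu^{j}v_\mu^{k}$ that one can hope to combine with the preceding spectral information to control $\int g_0''(u_\mu) v_\mu^{3}$ by $\int v_\mu^{2}$. The three cases of the conjecture should then be treated separately by feeding in the regime-dependent asymptotics of $u_\mu$ and $v_\mu$ supplied by Theorems~\ref{thm:limit_mu_0}--\ref{thm:limit_mu_star}; in particular the super-critical-$p$ case in dimension $d\geq 7$, where the threshold $p_c(q)$ appears, will likely require a perturbative spectral analysis around the zero-mass profile $u_0$.

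\medskip

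The decisive obstacle is exactly the sign-indefinite cubic integral $\int g_0''(u_\mu)v_\mu^{3}$: the factor $g_0''(u)=q(q-1)u^{q-2}-p(p-1)u^{p-2}$ changes sign at a fixed level of $u$, and $v_\mu$ itself is \emph{not} expected to be of constant sign — near $\mu_*$ it behaves like a radial derivative of a localized saturated profile, and so is oscillatory. No elementary H\"older-type inequality or pointwise domination therefore suffices, and one needs precise information on the nodal structure of $v_\mu$. If the spectral route stalls, I would pursue a parallel shooting/phase-plane strategy: parametrize the branch by the initial value $a:=u_\mu(0)\in(\alpha_\mu,\beta_\mu)$, express $\mu=\mu(a)$ and $M=M(a)$ as explicit transcendental integrals along the radial Hamiltonian flow $\tfrac12(u')^{2}+G_\mu(u)=\mathrm{const}$, and attempt to show that $(dM/da)/(d\mu/da)$ changes sign at most once on $(\alpha_*,\beta_*)$. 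This would replace an infinite-dimensional spectral question by a one-dimensional monotonicity question, at the cost of a considerably less transparent integrand, but with the advantage of directly connecting with the known endpoint asymptotics $a\to\alpha_*$ and $a\to\beta_*$.
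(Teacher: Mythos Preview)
The statement you are trying to prove is \emph{Conjecture~\ref{conjecture:M}}: it is presented in the paper as an open problem, not as a theorem. The paper explicitly says ``Although we are not able to prove this conjecture, our previous analysis implies the following uniqueness result'' and supports it only with the endpoint asymptotics of Theorems~\ref{thm:limit_mu_0}--\ref{thm:limit_mu_star} and with numerical simulations (Figures~\ref{fig:numerics_2D}--\ref{fig:numerics_5D_7D}). There is therefore no ``paper's own proof'' to compare against.

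Your proposal is a reasonable strategic outline, and the identity you derive for $M''$ is correct and in fact appears verbatim in the paper as~\eqref{eq:derivee_seconde} in Remark~\ref{rmk:higher_derivatives} (with $\delta_\mu=-v_\mu$). But the text is not a proof: every step beyond the $M''$ formula is aspirational (``should give'', ``one can hope to combine'', ``will likely require''), and you yourself name the decisive obstacle --- controlling the sign-indefinite cubic $\int g_0''(u_\mu)\,v_\mu^{3}$ when $v_\mu$ changes sign --- without resolving it. The spectral constraints you list (Morse index one, $\langle u,v\rangle=0$, $\langle v,\cL v\rangle=0$) do place $v_\mu$ in a ``cone'', but this cone is not two-dimensional in any useful sense and does not by itself bound a cubic moment by a quadratic one; the Pohozaev/Nehari relations involve different weights and do not obviously close the system either. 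The alternative shooting reformulation in your last paragraph is likewise a restatement of the difficulty, not a resolution.

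In short: the conjecture is open in the paper, and your write-up is a (sound) research plan rather than a proof. If you want partial results in the direction of the conjecture, the paper's own route is instructive: rather than attacking $M''>0$ globally, it extracts what it can from the endpoint behaviour (Theorems~\ref{thm:limit_mu_0}, \ref{thm:limit_mu_star}, Lemma~\ref{lem:M_derivative}, Remark~\ref{rmk:higher_derivatives}) and draws only the consequences that follow without the conjecture (Corollary~\ref{cor:equation_M_lambda}, Theorem~\ref{thm:uniqueness_I_lambda}).
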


The property~\eqref{eq:conjecture_M'} is an immediate consequence of Theorems~\ref{thm:limit_mu_0} and~\ref{thm:limit_mu_star} whenever $M'$ vanishes only once. The conjecture was put forward in~\cite{KilOhPocVis-17,CarSpa-20_ppt} for the quintic-cubic NLS equation ($p=5,q=3$) in dimensions $d\in\{2,3\}$, and in~\cite{Ricaud-17} for $d=3$, $p=7/3,q=5/3$. These cases have been confirmed by numerical simulations~\cite{Anderson-71,MerIsi-88,KilOhPocVis-17,Ricaud-17}. 

In Figures~\ref{fig:numerics_2D}--\ref{fig:numerics_5D_7D} we provide a selection of numerical simulations of the function $M$ in dimensions $d\in\{2,3,5,7\}$ which seem to confirm the conjecture. Although we have run many more simulations and could never disprove the conjecture, we have however not investigated all the possible powers and dimensions in a systematical way.

\begin{figure}[h]
\begin{tabular}{ccc}
\includegraphics[width=4cm]{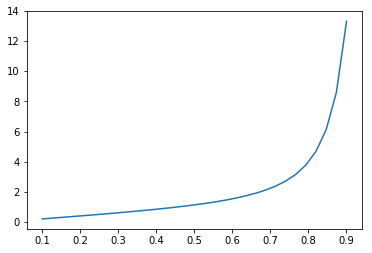}&\includegraphics[width=4cm]{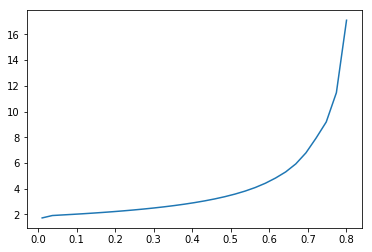}&\includegraphics[width=4cm]{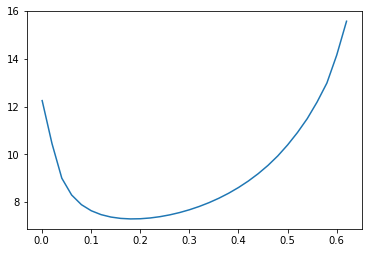}\\
$p=5$, $q=2$&$p=5$, $q=3$&$p=5$, $q=4$\\
\end{tabular}
\caption{Function $\mu/\mu_*\in[0,1)\mapsto |\bS^{d-1}|^{-1}M(\mu)$ in dimension $d=2$.\label{fig:numerics_2D}}

\bigskip\bigskip

\begin{tabular}{ccc}
\includegraphics[width=4cm]{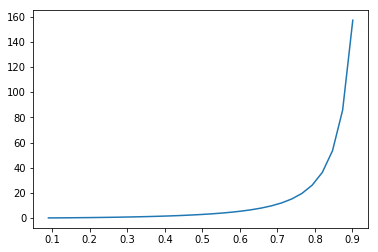}&\includegraphics[width=4cm]{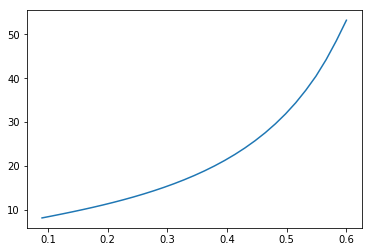}&\includegraphics[width=4cm]{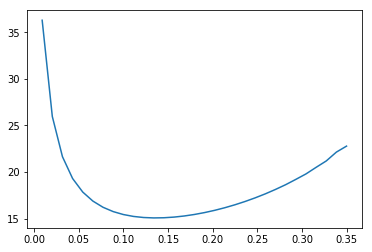}\\
$p=7/3$, $q=5/3$&$p=3$, $q=7/3$&$p=5$, $q=3$\\
\end{tabular}
\caption{Same function in dimension $d=3$.\label{fig:numerics_3D}}

\bigskip\bigskip

\begin{tabular}{ccc}
\includegraphics[width=4cm]{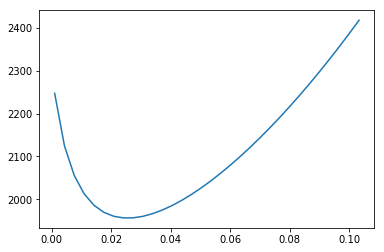}&\includegraphics[width=4cm]{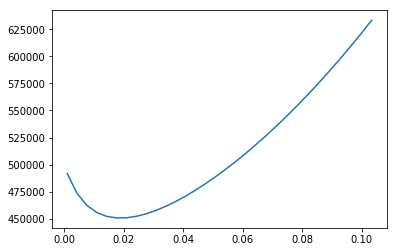}&\includegraphics[width=4cm]{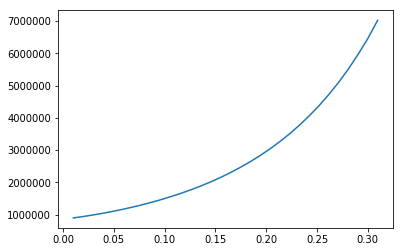}\\
$d=5$, $p=5$, $q=3$&$d=7$, $p=5/2$, $q=2$&$d=7$, $p=5$, $q=3$\\
\end{tabular}
\caption{Same function in dimensions $d=5$ and $d=7$. The second case $p=5/2,q=2$ is covered by~\eqref{eq:ugly_condition_on_p} whereas the third is not. \label{fig:numerics_5D_7D}}
\end{figure}

\subsection{The double-power energy functional}\label{sec:double_power_energy}
In this paper the larger power $p$ is defocusing and always controls the smaller focusing nonlinearity of exponent $q$. In this situation the double-power NLS equation~\eqref{powernl} has a natural variational interpretation in the whole possible range of powers, which we discuss in this section. We introduce the energy functional
$$\cE(u)=\frac12\int_{\R^d}|\nabla u(x)|^2\,dx+\frac1{p+1}\int_{\R^d}|u(x)|^{p+1}\,dx-\frac1{q+1}\int_{\R^d}|u(x)|^{q+1}\,dx$$
and the corresponding minimization problem
\begin{equation}
\boxed{I(\lambda):=\inf_{\substack{u\in H^1(\R^d)\cap L^{p+1}(\R^d)\\ \int_{\R^d}|u|^2=\lambda}}\cE(u)}
\label{eq:I_lambda}
\end{equation}
at fixed mass $\lambda\geq0$. This problem is well posed for all $p>q>1$ because we can write
$$\cE(u)= \frac12\int_{\R^d}|\nabla u(x)|^2\,dx-\int_{\R^d}G_{\mu_*}\big(u(x)\big)\,dx-\frac{\mu_*\lambda}{2}\geq -\frac{\mu_*\lambda}{2}.$$
Recall that $\mu_*$ in~\eqref{eq:mu_star} is precisely the lowest $\mu$ for which $G_\mu\leq0$ on $\R_+$. The minimization problem~\eqref{eq:I_lambda} appears naturally in applications, for instance in condensed matter physics for $d=3$, $p=7/3$ and $q=5/3$ where it can be obtained from the Thomas-Fermi-von Weis\"acker-Dirac functional of atoms, molecules and solids~\cite{ChaCohHan-01,Lieb-81b,BenBreLie-81,Lions-87,Lions-88,LeBris-93b}, in a certain limit of a large Dirac term~\cite{Ricaud-17,GonLewNaz-20_ppt}.

The existence of minimizers follows from rather standard methods of nonlinear analysis, as stated in the following

\begin{theorem}[Existence of minimizers for $I(\lambda)$]\label{thm:prop_I_lambda}
Let $d\geq2$ and $p>q>1$. 
The function $\lambda\mapsto I(\lambda)$ is concave non-increasing over $[0,\ii)$. It satisfies
\begin{itemize}
\item[$\bullet$] $I(\lambda)=0$ for all $0\leq \lambda\leq \lambda_c$,
\item[$\bullet$] $\lambda\mapsto I(\lambda)$ is negative and strictly decreasing on $(\lambda_c,\ii)$,
 \end{itemize}
 where
 $$\lambda_c\begin{cases}
 =0&\text{if $q<1+4/d$,}\\
=\int_{\R^d}Q^2&\text{if $q=1+4/d$,}\\
 \in(0,\ii)&\text{if $q>1+4/d$,}
 \end{cases}$$
with $Q$ the same NLS function as in Theorem~\ref{thm:limit_mu_0}.
The problem $I(\lambda)$ admits at least one positive radial-decreasing minimizer $u$ for every 
$$\lambda\begin{cases}
\geq \lambda_c&\text{if $q\neq1+4/d$,}\\
> \lambda_c&\text{if $q=1+4/d$.}
         \end{cases}$$
Any minimizer $u$ solves the Euler-Lagrange equation~\eqref{powernl} for some $\mu\in(0,\mu_*)$, hence must be equal to $u_\mu$. 
The infimum is not attained for $\lambda<\lambda_c$ or for $\lambda=\lambda_c$ and $q=1+4/d$. 
\end{theorem}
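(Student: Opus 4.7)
The proof combines Schwarz symmetrization, the Strauss radial compact embedding, and concentration-compactness with the uniqueness result (Theorem~\ref{thmpowernl}) to identify minimizers with $u_\mu$.

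\emph{Qualitative properties of $I$.} Monotonicity of $I$ follows from testing at $\lambda_2 > \lambda_1$ with a near-minimizer at mass $\lambda_1$ plus a translated, widely dispersed Gaussian of mass $\lambda_2 - \lambda_1$ whose kinetic and nonlinear energies both vanish by scaling. Concavity will follow from subadditivity $I(\lambda_1+\lambda_2) \leq I(\lambda_1) + I(\lambda_2)$ (via two far-apart near-minimizers) together with $I(0)=0$. Using $u_\mu$ as a trial state and invoking Theorem~\ref{thm:limit_mu_star} gives $\cE(u_\mu) \sim -\mu_* M(\mu)/2 \to -\ii$, hence $I(\lambda)\to -\ii$ as $\lambda\to\ii$. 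Together with concavity this forces $I\equiv 0$ on an initial interval $[0,\lambda_c]$ and strict decrease on $(\lambda_c,\ii)$. The value of $\lambda_c$ is determined by the mass-preserving scaling $u_R(x)=R^{-d/2}v(x/R)$: for $q<1+4/d$ the $L^{q+1}$ term dominates at large $R$, giving $\cE(u_R)<0$ for every $\lambda>0$ and hence $\lambda_c=0$; for $q=1+4/d$ the sharp Gagliardo-Nirenberg inequality (saturated by $Q$) gives $\lambda_c=\|Q\|_{L^2}^2$; for $q>1+4/d$ an interpolation between $L^2$ and $L^{p+1}$ bounds $\int|u|^{q+1}$ by a product of positive powers, producing a threshold $\lambda_c\in(0,\ii)$.

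\emph{Existence via concentration-compactness.} Take a minimizing sequence $(u_n)$; by Schwarz symmetrization it may be assumed radial-decreasing and is bounded in $H^1\cap L^{p+1}$. The Strauss embedding $H^1_{\rm rad}(\R^d)\hookrightarrow L^{q+1}(\R^d)$ (compact for $2<q+1<2^*$; otherwise use Brézis-Lieb) yields, along a subsequence, strong convergence in $L^{q+1}$ to some $u$, weak convergence in $H^1\cap L^{p+1}$, and a.e.\ convergence. Vanishing is excluded: $u\equiv 0$ would give $\int|u_n|^{q+1}\to 0$, hence $\cE(u_n)\to\tfrac12\liminf\|\nabla u_n\|^2\geq 0$, contradicting $I(\lambda)<0$. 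Writing $\lambda_1:=\|u\|_{L^2}^2\leq\lambda$ and applying Brézis-Lieb,
\begin{equation*}
\cE(u_n) = \cE(u) + \tfrac12\|\nabla(u_n-u)\|^2 + \tfrac{1}{p+1}\|u_n-u\|_{L^{p+1}}^{p+1} + o(1),
\end{equation*}
yields $I(\lambda)\geq \cE(u)\geq I(\lambda_1)$. Either $\lambda_1\leq \lambda_c$ (so $I(\lambda_1)=0>I(\lambda)$, impossible) or $\lambda_1\in(\lambda_c,\lambda]$, and then strict decrease of $I$ established above forces $\lambda_1=\lambda$. Hence $u$ has full mass, and strong $H^1\cap L^{p+1}$ convergence follows from the equality case of weak lower semicontinuity.

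\emph{Euler-Lagrange and non-attainment.} The minimizer $u$ satisfies $-\Delta u + u^p - u^q + \mu u = 0$ for some Lagrange multiplier $\mu\in\R$. Since $u\in L^2\cap L^\ii$ is positive with $u(x)\to 0$ at infinity and $-\Delta u + \mu u = u^q - u^p = o(u)$ asymptotically, Agmon-type/spectral arguments applied to $-\Delta+\mu$ force $\mu>0$ (otherwise $u\notin L^2$). The Pohozaev identity~\eqref{pohozaev} combined with $G_\mu\leq 0$ on $\R_+$ for $\mu\geq\mu_*$ forces $\mu<\mu_*$. Theorem~\ref{thmpowernl} then gives $u=u_\mu$. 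Non-attainment for $\lambda<\lambda_c$ follows from the strict positivity of $\cE$ on nontrivial functions with $\|u\|_{L^2}^2\leq\lambda_c$ (by the scaling/Gagliardo-Nirenberg analysis). At the critical threshold $\lambda=\lambda_c$ in the case $q=1+4/d$, attainment would saturate the sharp Gagliardo-Nirenberg inequality, forcing $u$ to be a scaled $Q$, but then the $L^{p+1}$ contribution makes $\cE(u)>0=I(\lambda_c)$, contradiction. The main obstacle is the concentration-compactness step: although the radial setting eliminates translation-based dichotomy, one must rule out radial dispersion of mass, which is handled through the Brézis-Lieb splitting combined with strict monotonicity of $I$ past $\lambda_c$; a secondary subtlety is justifying $\mu>0$ from the $L^2$ integrability and asymptotic linearization.
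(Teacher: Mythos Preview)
Your overall strategy is close to the paper's, but there are two genuine gaps.

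First, the concavity claim is incorrect as stated: subadditivity together with $I(0)=0$ does \emph{not} imply concavity (e.g.\ $f(x)=-\lfloor x\rfloor$ is subadditive, non-increasing, vanishes at $0$, and is not concave). You rely on concavity to obtain strict decrease of $I$ on $(\lambda_c,\infty)$, which is precisely what makes the Brezis--Lieb argument force $\lambda_1=\lambda$. The paper proves concavity by a different device: the substitution $v(x)=u(\lambda^{-1/d}x)$ yields $I(\lambda)=\lambda\,J(\lambda^{-2/d})$, where $J(\eps)$ is an infimum of functionals that are \emph{affine} in $\eps$; hence $J$ is automatically concave and non-decreasing, and a chain-rule computation shows $I''\leq 0$.

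Second, your existence argument needs $I(\lambda)<0$ to rule out vanishing, so it does not cover the endpoint $\lambda=\lambda_c$ when $q>1+4/d$, where $I(\lambda_c)=0$ but the theorem asserts a minimizer exists. The paper treats this case separately: take $\lambda_n\searrow\lambda_c$ with minimizers $u_{\mu_n}$; since $\cE(u_\mu)>0$ for $\mu$ near $0$ and $\cE(u_\mu)\to-\infty$ near $\mu_*$ (Corollary~\ref{cor:Energy}), the multipliers $\mu_n$ stay in a compact subset of $(0,\mu_*)$, and one passes to the limit. A smaller point: your Agmon-type argument for $\mu>0$ does not exclude $\mu=0$, since in dimensions $d\geq 5$ with $q$ Sobolev-supercritical the zero-mass equation $-\Delta u_0+u_0^p-u_0^q=0$ does have an $L^2$ positive radial solution. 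The paper instead reads $\mu$ off from Pohozaev as $\mu=\tfrac{2}{\lambda}\big(-I(\lambda)+\tfrac{1}{d}\int|\nabla u|^2\big)$, which is manifestly positive.
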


In the proof, provided later in Section~\ref{sec:prop_I_lambda}  we give a characterization of $\lambda_c$ in terms of optimizers of the Gagliardo-Nirenberg-type inequality
\begin{equation}
\norm{u}_{L^{q+1}(\R^d)}^{q+1}\leq C_{p,q,d}\norm{u}_{L^2(\R^d)}^{q-1-\theta(p-1)}\norm{\nabla u}_{L^2(\R^d)}^{2(1-\theta)}\norm{u}_{L^{p+1}(\R^d)}^{\theta(p+1)}
\label{eq:Gagliardo-Nirenberg-type}
\end{equation}
when $q\geq1+4/d$, with 
$$\theta=\frac{q-1-\frac{4}d}{p-1-\frac{4}d}\in[0,1).$$
A similar property was used in~\cite{KilOhPocVis-17,CarSpa-20_ppt}. At $q=1+4/d$ we have $\theta=0$ and obtain the usual Gagliardo-Nirenberg inequality, of which $Q$ is the unique optimizer. 

A very natural question is to ask whether minimizers of $I(\lambda)$ are \emph{unique}, up to space translations and multiplication by a phase factor. This does not follow from the uniqueness of $u_\mu$ at fixed $\mu$ because the minimizers could have different multipliers $\mu$'s. The concavity of $I$ implies that it is differentiable except for countably many values of $\lambda$. When the derivative exists and $\lambda>\lambda_c$, it can be seen that the minimizer is unique and given by $u_\mu$ with $\mu=-2I'(\lambda)$. Details will be provided later in Theorem~\ref{thm:uniqueness_I_lambda} where we actually show that the derivative can only have finitely many jumps in $(\lambda_c,\ii)$.  

Another natural question is to ask whether one solution $u_\mu$ could be a candidate for the minimization problem $I(\lambda)$ with $\lambda=M(\mu)$. From the non-degeneracy of $u_\mu$, the answer (see, e.g.~\cite[App.~E]{Weinstein-85}) is that when $M'(\mu)>0$ the corresponding solution $u_\mu$ is a \emph{strict local minimum} of $\cE$ at fixed mass $\lambda=M(\mu)$, whereas when $M'(\mu)<0$, the solution $u_\mu$ is a saddle point. In particular, there must always hold $M'(\mu)\geq0$ for a minimizer $u_\mu$ of $I(\lambda)$. 

From this discussion, we see that the following would immediately follow from Conjecture~\ref{conjecture:M}.

\begin{conjecture}[Uniqueness of minimizers]\label{conjecture:I}
Let $d\geq2$ and $p>q>1$. Then $I(\lambda)$ admits a \emph{unique minimizer} for all $\lambda\geq\lambda_c$ (resp. $\lambda>\lambda_c$ if $q=1+4/d$). 
\end{conjecture}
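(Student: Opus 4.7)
The conjecture is explicitly stated as a consequence of Conjecture~\ref{conjecture:M}, so my proof proposal assumes the latter. The plan rests on three ingredients already collected in the paper: (i) by Theorem~\ref{thm:prop_I_lambda}, every minimizer of $I(\lambda)$ is positive radial-decreasing (up to translations and phase) and therefore equals $u_\mu$ for some Lagrange multiplier $\mu\in(0,\mu_*)$; (ii) by Theorem~\ref{thmpowernl}, for each such $\mu$ the function $u_\mu$ is uniquely determined modulo translations and phase; (iii) by the non-degeneracy-based dichotomy recalled in the paragraph just before the conjecture (a Weinstein-type stability criterion relying on Theorem~\ref{thmpowernl}), any minimizer $u_\mu$ must satisfy $M'(\mu)\geq 0$, since $M'(\mu)<0$ would force $u_\mu$ to be a saddle point of $\cE$ at fixed mass.

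Granted Conjecture~\ref{conjecture:M}, I would then introduce the \emph{stable branch} $\cS:=\{\mu\in(0,\mu_*):M'(\mu)\geq 0\}$. By the conjecture, $\cS$ is either all of $(0,\mu_*)$ or the half-interval $[\mu_c,\mu_*)$. In either case $M$ is real-analytic and its derivative vanishes at most at the endpoint $\mu_c$ while being strictly positive on the interior of $\cS$; hence $M$ restricted to $\cS$ is strictly increasing and continuous, and using Theorem~\ref{thm:limit_mu_star} (which guarantees $M(\mu)\to\ii$ as $\mu\nearrow\mu_*$) one checks that $M|_{\cS}$ is a bijection onto an interval of the form $[M(\mu_c),\ii)$ (respectively $(\lim_{\mu\searrow 0}M(\mu),\ii)$ in the first scenario). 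The injectivity of $M|_{\cS}$ is the pivotal consequence of the monotonicity conjecture.

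To conclude, fix any $\lambda\geq\lambda_c$ (with strict inequality if $q=1+4/d$); Theorem~\ref{thm:prop_I_lambda} furnishes a minimizer of the form $u_{\mu_0}$, and by (iii) we have $\mu_0\in\cS$. The injectivity of $M|_{\cS}$ then forces $\mu_0$ to be the unique solution of $M(\mu_0)=\lambda$ within $\cS$, and (ii) promotes this to uniqueness of the minimizer of $I(\lambda)$ modulo translations and phases. The one step deserving a careful verification is that $M$ is strictly (not merely non-strictly) increasing on $\cS$ at the endpoint $\mu_c$, but this is immediate from the real-analyticity of $M$ on $(0,\mu_*)$ together with $M'>0$ on the open interval $(\mu_c,\mu_*)$.

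The genuine difficulty is of course not this short deduction but Conjecture~\ref{conjecture:M} itself, to which the problem of uniqueness is thus entirely reduced. Attempting to bypass the monotonicity conjecture and attack Conjecture~\ref{conjecture:I} directly would require either a second-variation analysis of $\cE$ along the whole branch $\mu\mapsto u_\mu$ ruling out two would-be minimizers of equal mass and energy, or a quantitative symmetric-rearrangement argument; both approaches appear substantially harder than controlling the sign changes of $M'$, which is why I would place all the effort on the latter and then reap Conjecture~\ref{conjecture:I} as a corollary via the three-step argument above.
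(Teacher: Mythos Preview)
Your proposal is correct and follows exactly the route the paper intends: the paper states just before the conjecture that it ``would immediately follow from Conjecture~\ref{conjecture:M}'' via the observation that any minimizer $u_\mu$ must satisfy $M'(\mu)\geq0$, and your argument simply makes this deduction explicit by noting that Conjecture~\ref{conjecture:M} forces $M$ to be strictly increasing on the stable branch $\cS$, hence injective there. The paper adds no further ingredients beyond this sketch, so your expansion is faithful to its approach.
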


Although we are not able to prove this conjecture, our previous analysis implies the following uniqueness result. 

\begin{theorem}[Partial uniqueness of minimizers]\label{thm:uniqueness_I_lambda}
Let $d\geq2$ and $p>q>1$. Then $I(\lambda)$ admits a \emph{unique positive radial minimizer} when 
\begin{itemize}
 \item $\lambda$ is large enough;
 \item $q<1+4/d$ and $\lambda\in[0,\eps)$,
 \item $q\geq 1+4/d$ and $\lambda\in(\lambda_c,\lambda_c+\eps)$
\end{itemize}
for some $\eps>0$ small enough. 
In fact, $I(\lambda)$ has a unique positive radial minimizer for all $\lambda\in[\lambda_c,\ii)$ (resp. $\lambda\in(\lambda_c,\ii)$ when $q=1+4/d$), except possibly at finitely many points in $[\lambda_c,\ii)$. At those values, the number of positive radial minimizers is also finite. 
For any $\lambda\in(\lambda_c,\ii)$ we have 
$$I'(\lambda^-)=-\frac12\min\big\{\mu\ :\ \cE(u_\mu)=I(\lambda),\ M(\mu)=\lambda\big\},$$
and 
$$I'(\lambda^+)=-\frac12\max\big\{\mu\ :\ \cE(u_\mu)=I(\lambda),\ M(\mu)=\lambda\big\}.$$
\end{theorem}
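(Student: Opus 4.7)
My plan is to encode each positive radial minimizer of $I(\lambda)$ as $u_\mu$ for some multiplier $\mu \in (0,\mu_*)$ and to study the \emph{minimizing set}
\[
S(\lambda) := \bigl\{\mu \in (0,\mu_*) \ : \ M(\mu) = \lambda,\ \cE(u_\mu) = I(\lambda)\bigr\}.
\]
By Theorem~\ref{thmpowernl}, every positive radial minimizer equals some $u_\mu$ up to translation, so uniqueness at mass $\lambda$ is equivalent to $|S(\lambda)|=1$. The saddle-point obstruction recalled just before the theorem gives $S(\lambda)\subset\{\mu:M'(\mu)\geq 0\}$. Moreover, the non-degeneracy~\eqref{eq:non_degenerate2} together with the implicit function theorem make $\mu\mapsto u_\mu$, and hence $M$ and $\Phi(\mu):=\cE(u_\mu)$, real-analytic on $(0,\mu_*)$; differentiating $\cE(u_\mu)$ along the branch and using the Euler--Lagrange equation yields the key identity $\Phi'(\mu)=-\tfrac{\mu}{2}M'(\mu)$.

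I first show that $S(\lambda)$ is finite. Theorems~\ref{thm:limit_mu_star} and~\ref{thm:limit_mu_0} give $M(\mu)\to+\infty$ as $\mu\nearrow\mu_*$ and a definite limit in $[0,+\infty]$ as $\mu\searrow 0$, with power-law control on the approach, so $M^{-1}(\lambda)$ lies in a compact subset of $(0,\mu_*)$ for every $\lambda$ at which a minimizer exists; real-analyticity of the non-constant $M$ then yields $|M^{-1}(\lambda)|<\infty$. Uniqueness in the two asymptotic regimes then follows almost at once. For $\lambda$ large, Corollary~\ref{cor:equation_M_lambda} provides $|M^{-1}(\lambda)|\leq 2$, and in the two-element case the smaller element has $M'<0$ and is excluded by the saddle obstruction. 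For $\lambda\searrow 0$ with $q<1+4/d$ the same corollary gives $|M^{-1}(\lambda)|=1$; when $q\geq 1+4/d$ and $\lambda\searrow\lambda_c$, I use the expansions of Theorem~\ref{thm:limit_mu_0} combined with a continuity--compactness argument (ruling out that a minimizer could be $u_\mu$ with $\mu$ bounded away from the one cluster where the energies collapse to $I(\lambda_c^+)$) to show that the minimizer comes from a single branch on which $M$ is locally strictly monotone, hence uniqueness in a right-neighborhood of $\lambda_c$.

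The hardest step is the ``finitely many exceptions'' claim. Set $\mathcal{E}:=\{\lambda>\lambda_c:|S(\lambda)|\geq 2\}$; by the previous paragraph $\mathcal{E}\subset[\lambda_c+\varepsilon,L]$ for suitable constants, so $M^{-1}(\mathcal{E})$ lies in some compact $K\subset(0,\mu_*)$. Real-analyticity lets me split $K$ into finitely many closed subintervals $P_1,\ldots,P_N$ on each of which $M$ is strictly monotone, with real-analytic inverse $f_i:=M|_{P_i}^{-1}$. For any pair $i\neq j$, the map $\lambda\mapsto\Phi(f_i(\lambda))-\Phi(f_j(\lambda))$ is real-analytic on $M(P_i)\cap M(P_j)$; it is not identically zero, because differentiating and using $\Phi'(\mu)=-\tfrac{\mu}{2}M'(\mu)$ on each branch would otherwise force $f_i=f_j$ on an open interval, contradicting that $P_i,P_j$ are distinct monotone pieces. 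Hence its zero set is finite on any compact subinterval, and summing over the $\binom{N}{2}$ pairs gives $|\mathcal{E}|<\infty$. Finally, the formulas for $I'(\lambda^\pm)$ follow by concavity: the Lagrange-multiplier identity $I'(\lambda)=-\mu/2$ at differentiability points (dense by concavity) combined with concavity yields $S(\lambda)\subset[-2I'(\lambda^-),-2I'(\lambda^+)]$, and approaching $\lambda$ from the right through differentiability points along the analytic branch produces $-2I'(\lambda^+)\in S(\lambda)$, hence equal to $\max S(\lambda)$ by the inclusion, and symmetrically for the left.
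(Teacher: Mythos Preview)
Your overall architecture matches the paper's: encode minimizers via the branch $\mu\mapsto u_\mu$, use the relation $\Phi'(\mu)=-\tfrac{\mu}{2}M'(\mu)$ (the paper's~\eqref{eq:relation_E_M}), decompose $(0,\mu_*)$ into finitely many monotone pieces of $M$, and study the energies $I_k(\lambda)=\Phi\circ M|_{P_k}^{-1}(\lambda)$. The treatment of large $\lambda$, small $\lambda$, and the derivative formulas $I'(\lambda^\pm)$ is essentially the same as the paper's.

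The substantive difference is how you show that two branches $I_i,I_j$ can coincide only at finitely many $\lambda$. You argue that $I_i-I_j$ is real-analytic and not identically zero (since $(I_i-I_j)'=-(f_i-f_j)/2$), hence has isolated zeros. The paper instead restricts from the start to the maximal \emph{increasing} intervals $J_k$ (this is legitimate because minimizers satisfy $M'(\mu)\geq0$), orders them, and observes that $I_k'(\lambda)=-\mu_k(\lambda)/2$ with $\mu_k(\lambda)\in J_k$ forces $\max I_{k+1}'<\min I_k'$: the slopes of different branches lie in disjoint ordered intervals. Two concave functions whose slopes are so separated can cross at most \emph{once}, and this avoids all endpoint issues. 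Your route works in principle but is more delicate: the inverses $f_i$ fail to be real-analytic at endpoints where $M'=0$, so $I_i-I_j$ is only $C^1$ there, and you must separately rule out accumulation of zeros at such points (Rolle plus $(I_i-I_j)'=-(f_i-f_j)/2$ does this, but it should be said).

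There is one genuine gap. You claim that $\mathcal{E}\subset[\lambda_c+\varepsilon,L]$ implies $M^{-1}(\mathcal{E})$ lies in a compact $K\subset(0,\mu_*)$. This is false in general: when $d\geq5$ and $q>1+\tfrac{4}{d-2}$, Theorem~\ref{thm:limit_mu_0} gives $M(0^+)=\int u_0^2<\infty$, so $M^{-1}([\lambda_c+\varepsilon,L])$ need not be bounded away from $0$. What \emph{is} bounded away from $0$ is $\bigcup_{\lambda\in\mathcal{E}}S(\lambda)$, because $S(\lambda)\subset\{E(\mu)=I(\lambda)\leq I(\lambda_c+\varepsilon)<0\}$ and Corollary~\ref{cor:Energy} gives $E(\mu)\to E(0)>0$ as $\mu\to0$ in this regime. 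Replace $M^{-1}(\mathcal{E})$ by the set of actual minimizing multipliers (equivalently, intersect with $\{E\leq I(\lambda_c+\varepsilon)\}$) and your compactness step goes through; the rest of the argument is unaffected since you only ever need the pieces $P_i$ to cover the minimizing $\mu$'s, not all of $M^{-1}(\lambda)$.
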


In order to explain the proof of Theorem~\ref{thm:uniqueness_I_lambda}, it is useful to introduce the energy
$$E(\mu):=\cE(u_\mu),\qquad \mu\in(0,\mu_*)$$
of our branch of solutions $u_\mu$. Note that 
\begin{equation}
E'(\mu)=-\frac{\mu}2 M'(\mu),
\label{eq:relation_E_M}
\end{equation}
that is, the variations of $E$ are exactly opposite to those of $M$. The following is a simple consequence of Theorems~\ref{thm:limit_mu_0} and~\ref{thm:limit_mu_star} together with~\eqref{eq:relation_E_M}.

\begin{corollary}[$E(\mu)$ at $0$ and $\mu_*$]\label{cor:Energy}
Let $d\geq2$ and  $p>q>1$. 

\medskip

\noindent$\bullet$ When $\mu\searrow0$, we have 
$$\lim_{\mu\to0^+}E(\mu)=\begin{cases}
\cE(u_0)=\frac{1}{d}\int_{\R^d}|\nabla u_0|^2&\text{if $d\geq3$ and $q> 1+\frac4{d-2}$,}\\
\frac{1}{d}\int_{\R^d}|\nabla S|^2 &\text{if $d\geq3$ and $q= 1+\frac4{d-2}$,}\\
0&\text{otherwise.}\\
\end{cases}$$
Moreover
$$E(\mu)\begin{cases}
<0&\text{for $q\leq  1+4/d$,}\\
>0&\text{for $q> 1+4/d$,}
        \end{cases}$$
for $\mu$ in a neighborhood of the origin. 

\medskip

\noindent$\bullet$ When $\mu\nearrow\mu_*$, we have 
$$E(\mu)\underset{\mu\to\mu_*}\sim-\frac{\mu_*\Lambda}{2(\mu_*-\mu)^d}$$
where $\Lambda$ is the same constant as in Theorem~\ref{thm:limit_mu_star}.

\medskip

\noindent$\bullet$ $\mu\mapsto E(\mu)$ is real-analytic on $(0,\mu_*)$ and the equation $E(\mu)=e$ always has finitely many solutions for any $e\in(-\ii,\max E]$.
\end{corollary}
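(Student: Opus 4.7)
The cornerstone will be the identity
\begin{equation}
E(\mu) = \frac{1}{d}\int_{\R^d}|\nabla u_\mu|^2\,dx - \frac{\mu}{2}M(\mu),
\label{eq:EM-identity-plan}
\end{equation}
obtained by eliminating $\int u_\mu^{p+1}$ and $\int u_\mu^{q+1}$ between the Nehari identity (testing~\eqref{powernl} against $u_\mu$) and the Pohozaev identity~\eqref{pohozaev}. With~\eqref{eq:EM-identity-plan} in hand, both limit statements reduce to controlling the kinetic energy and the mass.

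To treat $\mu\searrow 0$ I would argue case by case according to Theorem~\ref{thm:limit_mu_0}. In the \emph{super-critical} range $q>1+4/(d-2)$, the strong $\dot H^1\cap L^{p+1}$ convergence $u_\mu\to u_0$, combined with interpolation between $L^{2^*}$ and $L^{p+1}$, yields the convergence of $\int|\nabla u_\mu|^2$, $\int u_\mu^{p+1}$ and $\int u_\mu^{q+1}$; inserting this into the Pohozaev identity for $u_\mu$ forces $\mu M(\mu)\to 0$, since after passage to the limit the relation becomes the Pohozaev identity for $u_0$; then~\eqref{eq:EM-identity-plan} gives $E(\mu)\to\frac{1}{d}\int|\nabla u_0|^2=\cE(u_0)$. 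The \emph{critical} case $q=1+4/(d-2)$ follows the same scheme after the critical rescaling: $\int|\nabla u_\mu|^2=\int|\nabla v_\mu|^2\to\int|\nabla S|^2$ by scale-invariance of the kinetic energy, and $\mu M(\mu)=\mu\epsilon_\mu^{-2}\int v_\mu^2\to 0$ thanks to the rates~\eqref{eq:def_eps_mu}. In the \emph{sub-critical} case, the rescaling $v_\mu(y):=\mu^{-1/(q-1)}u_\mu(y/\sqrt\mu)\to Q$ makes both terms of~\eqref{eq:EM-identity-plan} of order $\mu^{1+2/(q-1)-d/2}$, with exponent strictly positive precisely because $q<1+4/(d-2)$, so $E(\mu)\to 0$. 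For the sign of $E$ near $0$ I would combine $E'(\mu)=-\frac{\mu}{2}M'(\mu)$ with~\eqref{eq:limit_M_derivative_mu_0}--\eqref{eq:limit_derivative_M_mu_0_supercritical}: when $q\leq 1+4/d$ the leading coefficient of $M'$ is strictly positive (the $Q^2$-term if $q<1+4/d$, the $Q^{p+1}$-term if $q=1+4/d$), so $E$ decreases strictly toward its limit $0$ and is therefore negative; when $1+4/d<q<1+4/(d-2)$ the leading coefficient of $M'$ changes sign and $E$ increases strictly toward $0$, giving $E>0$; and when $q\geq 1+4/(d-2)$ the limit $\cE(u_0)$ or $\cE(S)$ is already strictly positive, so $E>0$ follows by continuity.

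For the limit $\mu\nearrow\mu_*$, Theorem~\ref{thm:limit_mu_star} provides the radial-profile expansion $u_\mu(x)\approx U_*(|x|-R_\mu)$ with $R_\mu\sim\rho/(\mu_*-\mu)$. Writing the kinetic energy in spherical coordinates and using the $L^p$-convergence of the radial derivative stated right after~\eqref{eq:U}, one obtains $\int|\nabla u_\mu|^2=O(R_\mu^{d-1})=O((\mu_*-\mu)^{-(d-1)})$, which is of strictly lower order than $M(\mu)\sim\Lambda(\mu_*-\mu)^{-d}$ from~\eqref{eq:limit_M_mu_infty}. Plugging into~\eqref{eq:EM-identity-plan} isolates $E(\mu)\sim-\mu_*\Lambda/[2(\mu_*-\mu)^d]$.

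Real-analyticity of $\mu\mapsto u_\mu$ on $(0,\mu_*)$ is a standard application of the implicit function theorem to $F(u,\mu):=-\Delta u+\mu u+u^p-u^q$, using the invertibility of $(\cL_\mu)_{\rad}$ granted by the non-degeneracy~\eqref{eq:non_degenerate2}; composing with the polynomial-type functional $\cE$ yields real-analyticity of $E$. For finiteness of $\{E=e\}$ when $e\in(-\ii,\max E]$: the set is closed in $(0,\mu_*)$, bounded away from $\mu_*$ because $E\to-\ii$, and bounded away from $0$ either by continuity when $e\neq L:=\lim_{\mu\searrow 0}E(\mu)$, or by the strict monotonicity of $E$ near $0$ established above when $e=L$; it is therefore a compact subset of $(0,\mu_*)$ on which the analytic function $E-e$ has only finitely many zeros (it cannot vanish identically, since $E\to-\ii$). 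The hardest step, I expect, will be the super-critical sub-case $d\in\{3,4\}$ as $\mu\searrow 0$: there $M(\mu)$ itself diverges, so the cancellation $\mu M(\mu)\to 0$ must be extracted from the Pohozaev identity for $u_\mu$ combined with the $\dot H^1\cap L^{p+1}$ convergence to $u_0$, rather than read off directly from~\eqref{eq:EM-identity-plan}.
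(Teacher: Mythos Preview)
Your approach is essentially the same as the paper's---both rest on the relation $E'(\mu)=-\tfrac{\mu}{2}M'(\mu)$ together with the behaviour of $M$ and $M'$ from Theorems~\ref{thm:limit_mu_0} and~\ref{thm:limit_mu_star}. Your explicit identity $E(\mu)=\tfrac{1}{d}\int|\nabla u_\mu|^2-\tfrac{\mu}{2}M(\mu)$ is a clean way to read off the limits, and your arguments for the sign of $E$ near $0$ and for the asymptotics at $\mu_*$ are correct (for the critical case with $d\in\{3,4\}$ you should extract $\mu M(\mu)\to 0$ from the Nehari identity for the rescaled function $w_\mu$, not just from the rates~\eqref{eq:def_eps_mu}, since $\|w_\mu\|_{L^2}\to\infty$ there).

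There is, however, one genuine gap in the finiteness argument. When $d\ge 7$ and $q>1+\tfrac{4}{d-2}$, your ``strict monotonicity of $E$ near $0$'' is \emph{not} established: in your sign paragraph you only used continuity to get $E>0$ (correctly), and Theorem~\ref{thm:limit_mu_0} merely gives $M'(\mu)\to M'(0)\in\R$ with no control on the sign or vanishing of $M'(0)$. If $M'(0)=0$ you cannot rule out that $E$ oscillates around $L:=\cE(u_0)$ infinitely often as $\mu\searrow 0$, so the level set $\{E=L\}$ might a priori accumulate at $0$. The paper closes this gap via Remark~\ref{rmk:higher_derivatives}: in any dimension some derivative $M^{(k)}(\mu)$ diverges as $\mu\to 0$, hence $M^{(k)}$ keeps a fixed sign near $0$, and by repeated integration $M'$ (and then $E'=-\tfrac{\mu}{2}M'$) can change sign only finitely often near $0$, forcing $\{E=e\}$ to stay away from $0$ for every $e$.
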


In the case $e=E(0)>0$ when $q>1+4/(d-2)$ and $d\geq7$, one has to use Remark~\ref{rmk:higher_derivatives} which says that one derivative $M^{(k)}$ always diverges in the limit $\mu\to0$, for $k$ large enough depending on the dimension $d$. This implies that $E$ cannot take the value $E(0)$ infinitely many times in a neighborhood of the origin. 

We see that Conjecture~\ref{conjecture:I} would follow if we could prove that 
\begin{itemize}
 \item $E$ is decreasing for $q\leq 1+4/d$;
 \item $E$ has a unique positive zero and is decreasing on the right side of this point, for $q>1+4/d$. 
\end{itemize}
Note that when $q>1+4/d$, Conjecture~\ref{conjecture:I} is really weaker than Conjecture~\ref{conjecture:M} on the mass $M(\mu)$, since the places where $E(\mu)>0$ do not matter for the minimization problem $I(\lambda)$. 

We conclude the section with the 

\begin{proof}[Proof of Theorem~\ref{thm:uniqueness_I_lambda}]
If $\lambda$ is large or small, the statement follows immediately from Corollary~\ref{cor:equation_M_lambda} (and the fact that $M'(\mu)\geq0$ at a minimizer $u_\mu$ for $I(\lambda)$ in case there are two solutions to the equation $M(\mu)=\lambda$). 

We now discuss the more complicated case $q=1+4/d$. We know from Theorems~\ref{thm:limit_mu_0} and~\ref{thm:prop_I_lambda} that $\lambda_c=\int_{\R^d}Q^2=M(0)$. We claim that minimizers for $\lambda$ close to $\lambda_c$ necessarily have $\mu$ small enough, so that the conclusion follows from the monotonicity of $M$ close to the origin, by Theorem~\ref{thm:limit_mu_0}. To see this, assume by contradiction that there exists a sequence $\mu_n\nrightarrow0$ such that $\cE(u_{\mu_n})=I(\lambda_n)\nearrow0$ and $\lambda_n=M(\mu_n)\searrow \lambda_c$. Since $E$ diverges to $-\ii$ at $\mu_*$, we can assume after extracting a subsequence that $\mu_n\to\mu\in(0,\mu_*)$ and then obtain $E(\mu)=0$ with $M(\mu)=\lambda_c$. But this cannot happen because
\begin{align*}
0=E(\mu)&=\frac12\int_{\R^d}|\nabla u_\mu|^2+\frac1{p+1}\int_{\R^d}u_\mu^{p+1}-\frac1{q+1}\int_{\R^d}u_\mu^{q+1} \\
&\geq \frac1{p+1}\int_{\R^d}u_\mu^{p+1}>0,
\end{align*}
where we used the Gagliardo-Nirenberg inequality~\eqref{eq:Gagliardo-Nirenberg-type} which, in the case $q=1+4/d$, takes the simple form
$$\frac1{q+1}\int_{\R^d}u^{q+1}\leq \frac{\norm{u}^{\frac4d}_{L^2(\R^d)}}{2(\lambda_c)^{\frac2d}}\int_{\R^d}|\nabla u|^2.$$
As a conclusion, all the minimizers for $I(\lambda)$ must have a small Lagrange multiplier $\mu$ when $\lambda$ is close to $\lambda_c$, and the result follows when $q=1+4/d$. 

For every $\lambda>\lambda_c$, the number of $\mu$'s such that $E(\mu)=I(\lambda)<0$ is finite by Corollary~\ref{cor:Energy}. The same holds at $\lambda_c$ when $q\geq1+4/d$. Hence $I(\lambda)$ always admits finitely many positive radial minimizers. 

It remains to prove that there can be at most finitely many $\lambda$'s for which uniqueness does not hold. Let us denote by $J_k$ all the disjoint closed intervals on which $M$ is increasing. By real-analyticity and the behavior of $M$ close to $0$ and $\mu_*$ from Theorems~\ref{thm:limit_mu_0}  and~\ref{thm:limit_mu_star}, there are only finitely many such intervals.\footnote{When $q>1+4/(d-2)$ and $d\geq7$ we need to use again Remark~\ref{rmk:higher_derivatives} to ensure that $M'$ cannot change sign infinitely many times close to the origin. In any case we have $E(0)>0$ by Corollary~\ref{cor:Energy} and this region does not play any role in the rest of the argument.} Note that in each interval $J_k$ the derivative $M'$ can still vanish, but if it does so this can only happen at finitely many points by real-analyticity. In the rest of the argument we label the intervals $J_k$ in increasing order, that is, such that $\mu<\mu'$ for all $\mu\in J_k$ and all $\mu'\in J_{k+1}$.  If $q\leq1+4/d$ then $J_1=[0,\mu_1]$ and $M'$ is positive close to the origin. On each $J_k$ we have a well defined continuous inverse $\mu_k:=M_{|J_k}^{-1}$ and the corresponding continuous energy $I_k(\lambda)=E(\mu_k(\lambda))$, $\lambda\in J'_k:=M(J_k)$. Since $M$ and $-E$ are increasing over $J_k$, then $\mu_k$ is increasing and $I_k$ is decreasing over $J'_k$. The intervals $J'_k$ cover the whole interval $[\min M,\ii)\supset [\lambda_c,\ii)$ and it is clear from the existence of minimizers in Theorem~\ref{thm:prop_I_lambda} that 
\begin{equation}
 I(\lambda)=\min\big\{0, I_k(\lambda)\big\}.
 \label{eq:I_min_E_k}
\end{equation}
The function $\lambda\mapsto I_k(\lambda)$ is real-analytic on the open subset $M(\{M'>0\}\cap J_k)$ of $J'_k$ and a calculation shows that
\begin{equation}
I_k'(\lambda)=-\frac{\mu_k(\lambda)}{2}=-\frac{M_{|J_k}^{-1}(\lambda)}{2}
\label{eq:derivative_E_k_lambda}
\end{equation}
on this set, that is, the derivative of the energy with respect to the mass constraint is proportional to the Lagrange multiplier. Using~\eqref{eq:derivative_E_k_lambda} or~\eqref{eq:relation_E_M} we see that $I_k$ is actually $C^1$ over the whole interval $J'_k$, with the relation~\eqref{eq:derivative_E_k_lambda} (but it need not be smoother in general). Note that since $\mu_k$ is increasing we deduce from~\eqref{eq:derivative_E_k_lambda} that $I_k$ is concave over $J'_k$. A last important remark is that due to~\eqref{eq:derivative_E_k_lambda} and the fact that the $J_k$ are disjoint ordered intervals, we see that 
$$\max I_{k+1}'<\min I'_k<0.$$
The slopes of $I_{k+1}$ are always more negative than the slopes of $I_k$ at any possible point. This property implies that two functions $I_k$ and $I_{k'}$ can cross at most once, with $I_{k'}$ strictly below $I_k$ on the right of the crossing point for $k'>k$, and conversely on the left. Thus there must be at most finitely many crossing points between the functions $I_k$ on $[\lambda_c,\ii)$. The function $I$ being the minimum of all these functions $I_k$ (and the constant function 0), we deduce that $I$ is always equal to exactly one of the $I_k$, except at finitely many points. This proves the statement that there is always only one minimizer, except at finitely many possible values of $\lambda$, where the $I_k$ cross and realize the minimum in~\eqref{eq:I_min_E_k}. At any such crossing point $\lambda_0$, we have $I(\lambda)=I_k(\lambda)$ for $\lambda\in(\lambda_0-\eps,\lambda_0)$ and $I(\lambda)=I_{k'}(\lambda)$ for $\lambda\in(\lambda_0,\lambda_0+\eps)$, where $k$ corresponds to the lowest possible multipliers, that is the interval $J_k$ which is the furthest to the left and $k'$ corresponds to the largest possible multipliers. 
This concludes the proof of Theorem~\ref{thm:uniqueness_I_lambda}.
\end{proof}

Note that $\lambda_c$ is exactly known when $q\leq 1+4/d$, but it is in principle not explicit for $q>1+4/d$. In case Conjecture~\ref{conjecture:M} holds true, then $\lambda_c=M(\mu_c')$ where $\mu_c'$ is the unique positive root of $E$, which is necessarily on the right of $\mu_c$, the unique point at which $M'(\mu_c)=0$. 

\medskip

The rest of the paper is devoted to the proof of our other main results. 

\section{Proof of Theorem~\ref{thm:limit_mu_0} on the limit $\mu\to0$}\label{sec:proof_mu_0}

The convergence of $u_\mu$ in all cases is proved in~\cite{MorMur-14}. We only discuss here the behavior of $M(\mu)$ and its derivative, which was not studied for all cases in~\cite{MorMur-14}. Recall that $M'(\mu)$ is given by~\eqref{eq:M'(mu)}.

\subsection{$M$ and $M'$ in the sub-critical case} When $q<1+4/(d-2)$ the rescaled function $\tilde u_\mu$ in~\eqref{eq:rescaled_subcritical} solves 
\begin{equation}\label{rescaledeq}
-\Delta \tilde u_\mu=-\mu^{\frac{p-q}{q-1}}\tilde u_\mu^p +\tilde u_\mu^q-\tilde u_\mu
\end{equation}
and it converges to the NLS solution $Q$. More precisely, the implicit theorem gives
\begin{equation}
\norm{\tilde u_\mu-Q+\mu^{\frac{p-q}{q-1}}(\cL_Q)_{\rm rad}^{-1}Q^p}_{H^1(\R^d)}=o\left(\mu^{\frac{p-q}{q-1}}\right)
\label{eq:expansion_tilde_u_mu}
\end{equation}
where $\cL_Q:=-\Delta-qQ^{q-1}+1$. Recall that the the limiting NLS optimizer $Q$ is non-degenerate~\cite{Kwong-89,Weinstein-85} and therefore $0$ is in the resolvent set of its restriction to the sector of radial functions. Using the non-degeneracy of $(\cL_Q)_{\rm rad}$ we can also add an exponential weight in the form
\begin{equation}
\norm{e^{c|x|}\left(\tilde u_\mu-Q+\mu^{\frac{p-q}{q-1}}(\cL_Q)_{\rm rad}^{-1}Q^p\right)}_{L^\ii(\R^d)}=o\left(\mu^{\frac{p-q}{q-1}}\right)
\label{eq:expansion_tilde_u_mu_exp}
\end{equation}
for all $c<\sqrt{\mu}$. Using~\eqref{eq:expansion_tilde_u_mu}  we find after scaling
\begin{align}
M(\mu)&=\mu^{\frac{4+d-dq}{2(q-1)}}\int_{\R^d}\widetilde u_\mu(x)^2\,dx \nn\\
&=\mu^{\frac{4+d-dq}{2(q-1)}}\int_{\R^d}Q(x)^2\,dx\nn\\
&\qquad -2\mu^{\frac{2(p-q)+4+d-dq}{2(q-1)}}\pscal{Q,(\cL_Q)^{-1}_{\rm rad}Q^p}+o\left(\mu^{\frac{2(p-q)+4+d-dq}{2(q-1)}}\right).\label{eq:expansion_M_sub-critical}
\end{align}
In the NLS case we can compute
$$\big(-\Delta-qQ^{q-1}+1\big)\left(\frac{rQ'}{2}+\frac{Q}{q-1}\right)=-Q$$
so that 
\begin{equation}
(\cL_Q)^{-1}_{\rm rad}Q=-\frac{rQ'}{2}-\frac{Q}{q-1}
\label{eq:derivative_mu_NLS}
\end{equation}
and
\begin{align}
\pscal{Q,(\cL_Q)^{-1}_{\rm rad}Q^p}&=\pscal{-\frac{rQ'}{2}-\frac{Q}{q-1},Q^p}\nn\\
&=-\frac{2(p+1)-d(q-1)}{2(p+1)(q-1)}\int_{\R^d}Q(x)^{p+1}\,dx. \label{eq:comput_Q_power}
\end{align}
Inserting in~\eqref{eq:expansion_M_sub-critical} we find~\eqref{eq:limit_M_mu_0}. 
The derivative equals
$$M'(\mu)=-2\mu^{\frac{4+d-dq}{2(q-1)}-1}\pscal{\tilde u_\mu,\tilde\cL_1(\mu)^{-1}_{\rm rad}\tilde u_\mu}$$
where
$$\tilde\cL_1(\mu)(\mu):=-\Delta+p\mu^{\frac{p-q}{q-1}}\tilde u_\mu^{p-1}-q\tilde u_\mu^{q-1}+1.$$
Due to the convergence of $\tilde u_\mu$ in $L^\ii(\R^d)$, the operator $\tilde\cL_1(\mu)$ converges to $\cL_Q:=-\Delta-qQ^{q-1}+1$ in the norm resolvent sense. Since $0$ is in the resolvent set, we obtain the convergence
$$\tilde\cL_1(\mu)^{-1}_{\rm rad}\to (\cL_Q)^{-1}_{\rm rad}=\big(-\Delta-qQ^{q-1}+1\big)^{-1}_{\rm rad}$$
in norm. More precisely, from the resolvent expansion we have
\begin{multline*}
\bigg\|\tilde\cL_1(\mu)^{-1}_{\rm rad}- (\cL_Q)^{-1}_{\rm rad}\\
+\mu^{\frac{p-q}{q-1}}(\cL_Q)^{-1}_{\rm rad}\Big(pQ^{p-1}+q(q-1)Q^{q-2}\delta\Big)(\cL_Q)^{-1}_{\rm rad}\bigg\|=O\left(\mu^{2\frac{p-q}{q-1}}\right) 
\end{multline*}
where $\delta=(\cL_Q)^{-1}_{\rm rad}Q^p$ and where the function in the parenthesis is understood as a multiplication operator. Recall from~\eqref{eq:expansion_tilde_u_mu_exp} that $\delta$ decreases exponentially at the same rate as $Q$, hence $Q^{q-2}\delta$ tends to 0 at infinity even when $q<2$. This shows that 
\begin{align}
\mu^{1-\frac{4+d-dq}{2(q-1)}}M'(\mu)&= -2\pscal{Q,(\cL_Q)^{-1}_{\rm rad}Q} +2\mu^{\frac{p-q}{q-1}}\bigg\{2\pscal{Q,(\cL_Q)_{\rm rad}^{-2}Q^p}\nn\\
&\qquad+p\pscal{(\cL_Q)_{\rm rad}^{-1}Q,Q^{p-1}(\cL_Q)_{\rm rad}^{-1}Q}\nn\\
&\qquad +q(q-1)\pscal{(\cL_Q)_{\rm rad}^{-1}Q,Q^{q-2}\delta(\cL_Q)_{\rm rad}^{-1}Q}\bigg\}+o\left(\mu^{\frac{p-q}{q-1}}\right).\label{eq:horrible_expansion_derivative_M}
\end{align}
By integration over $\mu$ and comparing with the expansion of $M(\mu)$, the two terms have to be given by the expression in~\eqref{eq:limit_M_derivative_mu_0}. For the first this is easy to check since by~\eqref{eq:derivative_mu_NLS}, we have
$$-2\pscal{Q,(\cL_Q)^{-1}_{\rm rad}Q}=2\pscal{Q,\frac{Q}{q-1}+\frac{rQ'}{2}}=\frac{4+d-dq}{2(q-1)}\int_{\R^d}Q(x)^2\,dx.$$
For the second term this is more cumbersome but the value can be verified as follows. Let us introduce the scaled function
$$Q_\mu(x)=\mu^{\frac1{q-1}}Q(\sqrt\mu x)$$
which solves the equation
$$-\Delta Q_\mu-Q_\mu^q+\mu Q_\mu=0.$$
Then we have $\partial_\mu Q_\mu=-(\cL_{Q_\mu})^{-1}Q_\mu$ where $\cL_{Q_\mu}=-\Delta-q Q_\mu^{q-1}+\mu$ and
$$\int_{\R^d}\frac{Q_\mu^{p+1}}{p+1}=\mu^{\frac{2(p+1)-d(q-1)}{2(q-1)}}\int_{\R^d}\frac{Q^{p+1}}{p+1},$$
$$\partial_\mu\int_{\R^d}\frac{Q_\mu^{p+1}}{p+1}=\pscal{\partial_\mu Q_\mu,Q_\mu^p}=-\pscal{(\cL_{Q_\mu})^{-1}Q_\mu,Q_\mu^p},$$
\begin{multline*}
\partial^2_{\mu}\int_{\R^d}\frac{Q_\mu^{p+1}}{p+1}=2\pscal{(\cL_{Q_\mu})^{-2}Q_\mu,Q_\mu^p}+p\pscal{(\cL_{Q_\mu})^{-1}Q_\mu,Q_\mu^{p-1}(\cL_{Q_\mu})^{-1}Q_\mu}\\
+q(q-1)\pscal{Q_\mu^{q-2}\big|(\cL_{Q_\mu})^{-1}Q_\mu\big|^2,(\cL_{Q_\mu})^{-1}Q_\mu^p}.
\end{multline*}
At $\mu=1$ we obtain
$$-\pscal{(\cL_{Q_\mu})^{-1}Q_\mu,Q_\mu^p}=\frac{2(p+1)-d(q-1)}{2(q-1)(p+1)}\int_{\R^d}Q^{p+1}$$
which is exactly~\eqref{eq:comput_Q_power} and 
\begin{multline*}
2\pscal{\cL_Q^{-2}Q,Q^p}+p\pscal{\cL_{Q}^{-1}Q,Q^{p-1}\cL_{Q}^{-1}Q}+q(q-1)\pscal{Q^{q-2}\big|\cL_{Q}^{-1}Q\big|^2,\delta}\\
=\frac{(2(p-1)+4+d-dq)(2(p-q)+4+d-dq)}{4(p+1)(q-1)^2}\int_{\R^d}Q^{p+1} 
 \end{multline*}
which gives exactly the equality between the second order terms in~\eqref{eq:horrible_expansion_derivative_M} and~\eqref{eq:limit_M_derivative_mu_0}.

\subsection{$M$ in the super-critical case} We have $u_\mu\to u_0$ strongly in the homogeneous Sobolev space $\dot{H}^1(\R^d)$ and in $C^2(\R^d)$ by~\cite{MorMur-14}. The limit $u_0$ is unique~\cite{KwoMcLPelTro-92} and it  is not in $L^2(\R^d)$ in dimensions $d=3,4$, therefore $M(\mu)$ cannot have a bounded subsequence and the limit~\eqref{eq:limit_M_mu_0_supercritical} follows in this case. Let us prove the strong convergence in $L^2(\R^d)$ when $d\geq5$. We use~\cite[Lem. A.III]{BerLio-83} which says that 
$$u(x)\leq C_d\frac{\norm{\nabla u}_{L^2(\R^d)}}{|x|^{\frac{d-2}{2}}},\qquad |x|\geq 1$$
for a universal constant $C_d$. Due to the strong convergence of $u_\mu$ in $\dot{H}^1(\R^d)$, the gradient term is bounded and this gives
$$\left(-\Delta-\frac{C}{|x|^{\frac{(d-2)(q-1)}2}}\right)u_\mu\leq (-\Delta+\mu+u_\mu^{p-1}-u_\mu^{q-1})u_\mu=0,\qquad|x|\geq1$$
for a constant $C$. We can also use that 
$$\left(-\Delta-\frac{C}{|x|^{\frac{(d-2)(q-1)}2}}\right)\frac1{|x|^{d-2-\eps}}=\left(\eps(d-2-\eps)-\frac{C}{|x|^{\frac{(d-2)(q-1)-4}2}}\right)\frac1{|x|^{d-\eps}}$$
which is positive for $|x|$ large enough since $q-1>4/(d-2)$. By the maximum principle on $\R^d\setminus B_R$, we deduce that 
\begin{equation}
u_\mu(x)\leq \frac{u_\mu(R)R^{d-2-\eps}}{|x|^{d-2-\eps}},\qquad \forall |x|\geq R.
\label{eq:estim_max_principle}
\end{equation}
When $\eps$ is small enough, the domination is in $L^2(\R^d)$ for $d\geq5$ and this shows, by the dominated convergence theorem, that $u_\mu\to u_0$ strongly in $L^2(\R )$. The behavior of $M'$ is discussed below in Section~\ref{sec:proof_super_critical}. 

\subsection{$M$ in the critical case}  This case is more complicated and was studied at length in~\cite{MorMur-14}. The function $w_\mu$ in~\eqref{eq:rescaled_critical} satisfies the equation
\begin{equation}
 -\Delta w_\mu +(\eps_\mu)^{\frac{(p-1)(d-2)-4}{2}}w_\mu^{p}-w_\mu^q+\frac\mu{\eps_\mu^2} w_\mu=0
 \label{eq:effective_critical}
\end{equation}
for $\sqrt\mu\ll \eps_\mu\ll1$ as in~\eqref{eq:def_eps_mu} and we have 
$$M(\mu)=\frac{1}{\eps_\mu^{2}}\int_{\R^d}w_\mu(x)^2\,dx.$$
In dimensions $d=3,4$, we have $\|w_\mu\|_{L^2}\to\ii$ because the limit $S$ is not in $L^2$. In dimensions $d\geq5$, it is proved in~\cite[Lem.~4.8, Cor.~1.14]{MorMur-14} that $\|w_\mu\|_{L^2}\sim1$ and this implies $M(\mu)\to\ii$ in all cases. The same argument as in~\eqref{eq:estim_max_principle} actually gives $w_\mu\to S$ in $L^2(\R^d)$ for $d\geq5$. 

\subsection{An upper bound on $M'$} Next we derive an upper bound on $M'(\mu)$ following ideas from~\cite{KilOhPocVis-17}. 

\begin{lemma}[An estimate on $M'(\mu)$]\label{lem:M_derivative}
Let $d\geq2$ and $p>q>1$. Then we have 
\begin{multline}
\frac{M'(\mu)}{2}\left(\frac{(p-1)(p-q)(d-2)}{p+1}\beta(\mu)+\frac{2}{d}\big(d+2-(d-2)q\big)\right)\\
<\frac{dM(\mu)^2}{2T(\mu)}\left(\frac{d(p-1)(p-q)}{2(p+1)}\beta(\mu)+1+\frac4d -q\right)
 \label{eq:estim_M_derivative}
\end{multline}
for 
$$\begin{cases}
\text{all $\mu>0$ if $d=2$ or if $d\geq3$ and $q\leq 1+\frac{4}{d-2}$,}\\
\text{small enough $\mu>0$ if $d\geq3$ and $q>1+\frac{4}{d-2}$,}\\
\end{cases}$$
where
$$T(\mu)=\int_{\R^d}|\nabla u_\mu(x)|^2\,dx,\qquad \beta(\mu)=T(\mu)^{-1}\int_{\R^d}u_\mu(x)^{p+1}\,dx.$$
\end{lemma}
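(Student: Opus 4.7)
The plan is to combine a trio of exact identities for the derivatives of $T(\mu)$, $M(\mu)$, $P(\mu)$, $Q(\mu)$ with a Cauchy--Schwarz step for the linearized operator $\cL_\mu=-\Delta+pu_\mu^{p-1}-qu_\mu^{q-1}+\mu$ whose inverse controls $M'(\mu)=-2\langle u_\mu,(\cL_\mu)^{-1}_{\rm rad}u_\mu\rangle$.

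First I would derive the basic algebraic identities. Multiplying the equation $-\Delta u_\mu+u_\mu^p-u_\mu^q+\mu u_\mu=0$ by $u_\mu$ and by $x\cdot\nabla u_\mu$ gives the Nehari identity $T+P-Q+\mu M=0$ and the Pohozaev identity $\frac{d-2}{2d}T+\frac{P}{p+1}-\frac{Q}{q+1}+\frac{\mu M}{2}=0$; eliminating $Q$ between them yields the fundamental relation
\[
(q-1)\,\mu\frac{M(\mu)}{T(\mu)}=\frac{2(p-q)}{p+1}\beta(\mu)+\frac{d+2-(d-2)q}{d}.
\]
Differentiating these identities in $\mu$ and pairing the linearized equation $\cL_\mu\partial_\mu u_\mu=-u_\mu$ against $u_\mu$, $x\cdot\nabla u_\mu$ and $\partial_\mu u_\mu$, while exploiting the self-adjointness of $\cL_\mu$ together with the auxiliary relations $\cL_\mu u_\mu=(p-1)u_\mu^p-(q-1)u_\mu^q$ and $\cL_\mu(x\cdot\nabla u_\mu)=2g_\mu(u_\mu)$, I would solve the resulting linear system to obtain the key identity
\[
T'(\mu)=\tfrac{d}{2}M(\mu),
\]
together with explicit linear formulas for $P'(\mu)$ and $Q'(\mu)$ in terms of $M$ and $M'$. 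In particular, differentiating the $L^2$-scaling Pohozaev identity $\tfrac{2T}{d}=\tfrac{(q-1)Q}{q+1}-\tfrac{(p-1)P}{p+1}$ provides the representation $M(\mu)=\int_{\R^d}\partial_\mu u_\mu\cdot[(q-1)u_\mu^q-(p-1)u_\mu^p]\,dx$, alongside the trivial pairing $M'(\mu)/2=\int u_\mu\,\partial_\mu u_\mu$.

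Next I would differentiate the displayed $\beta$-relation above in $\mu$: using $T'=dM/2$ this produces the exact expression
\[
T(\mu)\beta'(\mu)=\frac{(p+1)(q-1)}{2(p-q)}\left(M+\mu M'-\frac{d\mu M^2}{2T}\right),
\]
which shows that the combination $M+\mu M'-d\mu M^2/(2T)$ carries the same sign as $\beta'(\mu)$. This identity connects the quantity $dM^2/T$ appearing on the right-hand side of the claim with derivatives of $\beta$ and $M$.

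The final step, which is the principal obstacle, is to extract the strict inequality in the form claimed. The idea, following \cite{KilOhPocVis-17}, is to apply Cauchy--Schwarz to the two integral representations of $M$ and $M'/2$ as pairings of $\partial_\mu u_\mu$ against $(q-1)u_\mu^q-(p-1)u_\mu^p$ and $u_\mu$ respectively, choosing a weight adapted to the double-power structure. The inequality will be strict because these two test functions are not proportional on the support of $u_\mu$ (proportionality would force $u_\mu$ to be constant, which is ruled out by the decay at infinity). After substituting back the relation expressing $(d+2-(d-2)q)/d$ in terms of $\mu M/T$ and $\beta$, and tracking coefficients using the identity $T'=dM/2$, the output of the Cauchy--Schwarz step should rearrange into the precise form $\frac{M'}{2}(A\beta+B)<\frac{dM^2}{2T}(C\beta+D)$ asserted in the lemma. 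The restrictions on $d$, $q$ and the range of $\mu$ in the hypothesis are exactly those under which the coefficient $A\beta+B$ remains strictly positive, so that the inequality is meaningful as an upper bound on $M'(\mu)$.
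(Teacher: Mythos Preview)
Your preliminary identities are correct and indeed underlie the paper's computation: the Nehari/Pohozaev relations, the formula $\cL_\mu u_\mu=(p-1)u_\mu^p-(q-1)u_\mu^q$, the relation $\cL_\mu(x\cdot\nabla u_\mu)=2g_\mu(u_\mu)$, and in particular your identity $T'(\mu)=\tfrac{d}{2}M(\mu)$, which is exactly the orthogonality $\langle \partial_\mu u_\mu,\cL_\mu(x\cdot\nabla u_\mu+\tfrac{d}{2}u_\mu)\rangle=0$ that the paper uses.

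The genuine gap is the final step. You propose a Cauchy--Schwarz argument on the pairings $M=\langle\partial_\mu u_\mu,(q-1)u_\mu^q-(p-1)u_\mu^p\rangle$ and $M'/2=\langle\partial_\mu u_\mu,u_\mu\rangle$, ``choosing a weight adapted to the double-power structure'', and assert that this ``should rearrange'' into the claimed inequality. This is not carried out, and there is a real obstruction: $\cL_\mu$ is \emph{indefinite}, so no ordinary Cauchy--Schwarz with a positive weight will produce an inequality of the required sign. What is missing from your sketch is the crucial spectral input that $\cL_\mu$ has \emph{exactly one} negative eigenvalue; without it there is no mechanism forcing the inequality.

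The paper's argument uses precisely this. One restricts the quadratic form of $\cL_\mu$ to the three-dimensional space spanned by $e_1=\partial_\mu u_\mu$, $e_2=u_\mu$, $e_3=x\cdot\nabla u_\mu+\tfrac{d}{2}u_\mu$ and computes the $3\times3$ Gram matrix $L=(L_{ij})$ with $L_{ij}=\langle e_i,\cL_\mu e_j\rangle$. Your identities give $L_{11}=-M'/2$, $L_{12}=-M$, $L_{13}=0$, and explicit formulas for $L_{22},L_{23},L_{33}$ in terms of $T$ and $\beta$. Under the stated hypotheses on $d,q,\mu$ one checks that the $\{2,3\}$ minor has \emph{negative} determinant, equal to $-T^2$ times the coefficient you call $A\beta+B$; thus the form is already indefinite on $\mathrm{span}\{e_2,e_3\}$. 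Since $\cL_\mu$ has a single negative eigenvalue, $L$ can have at most one negative eigenvalue, hence exactly one, and therefore $\det L<0$. Expanding $\det L$ along the first row (using $L_{13}=0$) gives
\[
\det L=\frac{M'(\mu)}{2}\,(A\beta+B)\,T^2-\frac{d}{2}M(\mu)^2\,(C\beta+D)\,T<0,
\]
which is the inequality of the lemma. Strictness comes from non-degeneracy of $(\cL_\mu)_{\rm rad}$. Your observation that the hypotheses correspond to $A\beta+B>0$ is correct, but the role of this positivity is to make the $2\times2$ minor negative, not to justify a Cauchy--Schwarz step.
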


\begin{proof}[Proof of Lemma~\ref{lem:M_derivative}]
The Pohozaev identity~\eqref{pohozaev} gives
\begin{align}
 \frac{d-2}{2d}T(\mu)&=-\frac1{p+1}\int_{\R^d} u_\mu(x)^{p+1}\,dx+\frac1{q+1}\int_{\R^d} u_\mu(x)^{q+1}\,dx\nn\\
 &\qquad -\frac\mu2\int_{\R^d}u_\mu(x)^2\,dx\nn\\
 &=-\frac{T(\mu)\beta(\mu)}{p+1}+\frac1{q+1}\int_{\R^d} u_\mu(x)^{q+1}\,dx-\frac{\mu M(\mu)}2
 \label{eq:Pohozaev_2PNLS}
\end{align}
and taking the scalar product with $u$ in~\eqref{powernl} we find
\begin{align}
 T(\mu)&=-\int_{\R^d} u_\mu(x)^{p+1}\,dx+\int_{\R^d} u_\mu(x)^{q+1}\,dx -\mu\int_{\R^d}u_\mu(x)^2\,dx\nn\\
 &=-T(\mu)\beta(\mu)+\int_{\R^d} u_\mu(x)^{q+1}\,dx -\mu M(\mu).
\end{align}
This gives the relation
\begin{equation}
 \frac{p-q}{(p+1)(q+1)}\beta(\mu)=\frac{d-2}{2d}-\frac{1}{q+1}+\frac{(q-1)}{2(q+1)}\frac{\mu\,M(\mu)}{T(\mu)}.
 \label{eq:formule_beta}
\end{equation}
When $d\geq3$ and $q>1+4/(d-2)$ we have 
\begin{equation}
 \lim_{\mu\searrow0}\beta(\mu)=\frac{(p+1)(d-2)}{2d(p-q)}\left(q-1-\frac{4}{d-2}\right):=\beta(0).
 \label{eq:beta_0}
\end{equation}
In all the other cases we have $\beta(\mu)\to0$ when $\mu\searrow0$. More precisely, we have 
$$\beta(\mu)=O\left(\mu^{\frac{p-q}{q-1}}\right)\to0\qquad\text{for $d=2$, or $d\ge3$ and $q<1+\frac{4}{d-2}$}$$
and
$$\beta(\mu)=O\left(\eps_\mu^{\frac{d-2}{2}(p-q)}\right)\to0\qquad\text{for $d\ge3$ and $q=1+\frac4{d-2}$}.$$
Next we compute the symmetric matrix $L_{ij}$ of the restriction of the operator $\cL_\mu$ to the finite dimensional space spanned by $u_\mu$, $\partial_\mu u_\mu$ and 
$$\frac{x\cdot\nabla+\nabla\cdot x}{2}u_\mu=\left(x\cdot\nabla+\frac{d}2\right)u_\mu=ru_\mu'+\frac{d}{2}u_\mu.$$
Some tedious but simple computations using the above relations give
$$L_{11}:=\pscal{\partial_\mu u_\mu,\cL_\mu \partial_\mu u_\mu}=-\frac{M'(\mu)}{2},\qquad L_{12}:=\pscal{\partial_\mu u_\mu,\cL_\mu u_\mu}=-M(\mu),$$
$$L_{22}:=\pscal{u_\mu,\cL_\mu u_\mu}=\left(\frac{(p-1)(p-q)}{p+1}\beta(\mu)-\frac{2(q+1)}{d}\right)T(\mu),$$
$$L_{13}:=\pscal{ru_\mu'+\frac{d}{2}u_\mu,\cL_\mu \partial_\mu u_\mu}=0,$$
$$L_{23}:=\pscal{ru_\mu'+\frac{d}{2}u_\mu,\cL_\mu u_\mu}=\left(\frac{d(p-1)(p-q)}{2(p+1)}\beta(\mu)-(q-1)\right)T(\mu)$$
and 
\begin{align*}
L_{33}&:=\pscal{ru_\mu'+\frac{d}{2}u_\mu,\cL_\mu \left(ru_\mu'+\frac{d}{2}u_\mu\right)}\\
&=\frac{d}2\left(\frac{d(p-1)(p-q)}{2(p+1)}\beta(\mu)+1+\frac4d -q\right)T(\mu). 
\end{align*}
Note that $L_{22}<0$ for $\mu$ small enough when $d=2$ or when $d\ge 3$ and $q\leq 1+4/(d-2)$. We have
\begin{multline*}
\det\begin{pmatrix}
L_{22}&L_{23}\\
L_{32}&L_{33}\end{pmatrix}\\
=T(\mu)^2\left(-\frac{(p-1)(p-q)(d-2)}{p+1}\beta(\mu)+\frac{2}d \left(q(d-2)-d-2\right)\right).
\end{multline*}
This is always negative in the sub-critical and critical case. In the super-critical case, we obtain from~\eqref{eq:beta_0} that 
\begin{equation}
\det\begin{pmatrix}
L_{22}&L_{23}\\
L_{32}&L_{33}\end{pmatrix}\\
\underset{\mu\searrow0}\sim-T(\mu)^2\frac{\big(q(d-2)-d-2\big)\big(p(d-2)-d-2\big)}{2d}<0.
\label{eq:determinant_small}
\end{equation}
Since $\cL_\mu$ has a unique negative eigenvalue, we conclude that the full determinant is negative:
\begin{align*}
0&>\det(L)\\
&=\frac{M'(\mu)}{2}\left(\frac{(p-1)(p-q)(d-2)}{p+1}\beta(\mu)+\frac{2}{d}\big(d+2-(d-2)q\big)\right)T(\mu)^2\\
&\qquad -\frac{d}2M(\mu)^2\left(\frac{d(p-1)(p-q)}{2(p+1)}\beta(\mu)+1+\frac4d -q\right)T(\mu).
\end{align*}
This gives the estimate~\eqref{eq:estim_M_derivative}.
\end{proof}

In the critical case $q=1+4/(d-2)$,~\eqref{eq:estim_M_derivative} gives for $\mu$ small enough
$$M'(\mu)<-c\frac{M(\mu)^2}{T(\mu)\beta(\mu)}\underset{\mu\searrow0}{\sim}
-c\begin{cases}
\eps_\mu^{-\frac{(d-2)p-d-2}{2}}M(\mu)^2&\text{for $d=3,4$,}\\
\eps_\mu^{-\frac{(d-2)(p-1)+4}{2}}&\text{for $d\geq5$.}
\end{cases}
$$
We obtain $M'(\mu)\to-\ii$ as was claimed in the statement. 

In the super-critical case $q>1+4/(d-2)$, we similarly find $M'(\mu)\to-\ii$ in dimensions $d=3,4$. In dimensions $d\geq5$, using~\eqref{eq:beta_0} we find $M'(\mu)<0$ for $\mu$ small enough whenever
$$p<1+\frac{4}{d-2}+\frac{32}{d(d-2)\big(q(d-2)-(d+2)\big)}.$$

\subsection{$M'$ in the super-critical case}\label{sec:proof_super_critical}
This last section is devoted to the study of $M'$ in the super-critical case. We have seen that $u_\mu\to u_0$ in $\dot{H}^1(\R^d)\cap L^\ii(\R^d)$, the unique radial-decreasing solution to the equation
$$-\Delta u_0+u_0^p-u_0^q=0.$$
In addition, the convergence holds in $L^s(\R^d)$ for all $s>d/(d-2)$ since $u_\mu(r)\leq Cr^{2-d-\eps}$ at infinity. This includes $L^2(\R^d)$ only in dimensions $d\geq5$. 

In Lemma~\ref{lem:non-degenerate_mu_0} in Appendix~\ref{app:u_0}, we show that the limiting linearized operator
$$\cL_0:=-\Delta +p(u_0)^{p-1}-q(u_0)^{q-1}$$
has the trivial kernel 
$$\ker(\cL_0)=\left\{\partial_{x_1}u_0,...,\partial_{x_d}u_0\right\}.$$
This allows us to define $(\cL_0)_{\rm rad}^{-1}$ by the functional calculus. Note that $\cL_0$ admits exactly one negative eigenvalue, since
$$\pscal{u'_0,\cL_0u_0'}=-(d-1)\int_{\R^d}\frac{u_0'(x)^2}{|x|^2}\,dx<0$$
and since it is the norm-resolvent limit of $\cL_\mu$, which has exactly one negative eigenvalue. In particular, we see that $(\cL_0)_{\rm rad}^{-1}$ has one negative eigenvalue and is otherwise positive and unbounded from above. Our first step is to prove that its quadratic form domain is the same as for the free Laplacian, in sufficiently high dimensions. In the whole section we assume $d\geq5$ for simplicity, although some parts of our proof apply to $d\in\{3,4\}$. 

\begin{lemma}[Quadratic form domain of $(\cL_0)_{\rm rad}^{-1}$]\label{lem:quadratic_form_domain_inverse_L}
Let $d\geq5$ and $p>q>1+4/(d-2)$. 
There exists a constant $C>0$ such that 
$$\frac1C(-\Delta)_{\rm rad}^{-1}-C\leq (\cL_0)_{\rm rad}^{-1}\leq C(-\Delta)_{\rm rad}^{-1}$$
in the sense of quadratic forms.
\end{lemma}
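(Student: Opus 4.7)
The plan is to write $\cL_0 = -\Delta + V$ with the bounded radial potential $V = p u_0^{p-1} - q u_0^{q-1}$, and to exploit the decay $u_0(x) = O(|x|^{2-d})$ at infinity together with the super-critical hypothesis $q > 1 + 4/(d-2)$ to conclude that $|V(x)| \leq C(1+|x|)^{-(d-2)(q-1)}$ with $(d-2)(q-1) > 4$; in particular $V \in L^\ii(\R^d) \cap L^{d/2}(\R^d)$. The overall strategy will be to first establish that $(\cL_0)^{-1}_{\rm rad}$ extends to a bounded isomorphism from $\dot{H}^{-1}_{\rm rad}(\R^d)$ onto $\dot{H}^1_{\rm rad}(\R^d)$, and then to derive both quadratic-form inequalities from this continuity property together with a resolvent identity.

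For the upper bound, I would factor $(\cL_0)^{-1} = (I + T)^{-1}(-\Delta)^{-1}$ where $T := (-\Delta)^{-1} V$ acts on $\dot{H}^1_{\rm rad}$. H\"older and Sobolev will show that $T$ is bounded on this space, and approximating $V$ in $L^{d/2}$ by bounded radial functions with compact support, combined with Rellich--Kondrachov, will yield the compactness of $T$. The Fredholm alternative then reduces the invertibility of $I+T$ to the triviality of its kernel, which consists of radial $\dot{H}^1$-solutions of $\cL_0\phi=0$; Lemma~\ref{lem:non-degenerate_mu_0} excludes such solutions since the generators $\partial_{x_i}u_0$ of the full kernel are not radial. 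From the boundedness of $(\cL_0)^{-1}_{\rm rad}: \dot H^{-1}_{\rm rad} \to \dot H^1_{\rm rad}$, the Cauchy--Schwarz estimate $|\langle f, (\cL_0)^{-1}_{\rm rad} f\rangle| \leq \|f\|_{\dot H^{-1}}\,\|(\cL_0)^{-1}_{\rm rad} f\|_{\dot H^1} \leq C\|f\|_{\dot H^{-1}}^2$ will yield the upper bound.

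For the lower bound, setting $\phi = (-\Delta)^{-1} f$ and $\psi = (\cL_0)^{-1} f$, the identity $\psi = \phi - (-\Delta)^{-1}V\psi$ gives, after an integration by parts,
\[
\langle f, (\cL_0)^{-1}_{\rm rad} f\rangle = \|f\|_{\dot H^{-1}}^2 - \int_{\R^d} V\phi\psi,
\]
so the task reduces to bounding $|\int V\phi\psi|$ by $c \|f\|_{\dot H^{-1}}^2 + C\|f\|_{L^2}^2$ for some $c<1$. My plan is to split $V = V\1_{|x|>R} + V\1_{|x|\leq R}$. The tail would be controlled through H\"older plus Sobolev together with the already-proved bound $\|\nabla\psi\|\leq C\|f\|_{\dot H^{-1}}$: one gets $|\int V\1_{|x|>R}\phi\psi|\leq C\|V\1_{|x|>R}\|_{L^{d/2}}\|\nabla\phi\|\,\|\nabla\psi\| \leq \epsilon(R)\|f\|_{\dot H^{-1}}^2$ with $\epsilon(R)\to 0$ as $R\to\ii$. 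For the compactly supported part I would use the Hardy--Littlewood--Sobolev embedding $(-\Delta)^{-1}: L^2(\R^d) \to L^{2d/(d-4)}(\R^d)$, which holds thanks to $d\geq5$, to obtain $\|\phi\1_{B_R}\|_{L^2} \leq C_R\|f\|_{L^2}$, and Sobolev plus H\"older to get $\|\psi\1_{B_R}\|_{L^2} \leq C_R\|\nabla\psi\|_{L^2}\leq C_R\|f\|_{\dot H^{-1}}$. An AM--GM applied to the resulting product $C_R\|f\|_{L^2}\|f\|_{\dot H^{-1}}$ would split it into $\epsilon\|f\|_{\dot H^{-1}}^2$ and $C_{R,\epsilon}\|f\|_{L^2}^2$; choosing $R$ large and then $\epsilon$ small completes the argument.

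The main obstacle will be the lower bound: the naive estimate $|\int V\phi\psi|\leq C\|V\|_{L^{d/2}}\|\nabla\phi\|\|\nabla\psi\|$ does not suffice, since $\|V\|_{L^{d/2}}$ is a fixed constant with no reason to be smaller than $1/C$. Resolving this forces the cutoff decomposition: the tail part of $V$ \emph{is} small in $L^{d/2}$, while the compactly supported part must be handled via the stronger $L^{2d/(d-4)}$-integrability of $\phi$, which is available precisely when $d\geq 5$. This is the only place where the dimensional restriction enters in a structural rather than cosmetic manner.
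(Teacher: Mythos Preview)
Your approach is correct in outline but genuinely different from the paper's. The paper uses the symmetric Birman--Schwinger factorization $\cL_0=(-\Delta)^{1/2}(1+K)(-\Delta)^{1/2}$ with $K=(-\Delta)^{-1/2}V_0(-\Delta)^{-1/2}$ compact and self-adjoint on $L^2$, then analyzes the spectrum of $K$ directly: exactly one eigenvalue $\nu_1<-1$, whose eigenfunction $g_1$ satisfies $(-\Delta)^{-1}g_1\in L^2$ (this is where $d\geq5$ enters, via $V_0\in L^{d/4}$ and HLS). The operator inequality $\frac{1}{1+\nu_2}-c|g_1\rangle\langle g_1|\leq(1+K)^{-1}\leq\frac{1}{1+\nu_2}$ then sandwiches $(\cL_0)_{\rm rad}^{-1}$ between multiples of $(-\Delta)^{-1}$ minus a rank-one piece, which becomes the constant $C$ in the lower bound. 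Your route---asymmetric factorization $\cL_0=(-\Delta)(I+T)$ on $\dot H^1_{\rm rad}$, Fredholm for invertibility, then a resolvent identity plus a large-$R$/small-$R$ cutoff for the lower bound---is more hands-on but avoids the spectral bookkeeping. The paper's argument gives slightly more (it identifies the negative part of $(\cL_0)^{-1}_{\rm rad}$ as essentially rank-one), while yours is arguably more robust and closer in spirit to standard perturbative estimates.

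There is one gap you should close. When you invoke Lemma~\ref{lem:non-degenerate_mu_0} to kill the kernel of $I+T$ on $\dot H^1_{\rm rad}$, note that the lemma only asserts $\ker(\cL_0)_{\rm rad}=\{0\}$ in $L^2$, i.e.\ the regular radial ODE solution $v$ is not square-integrable. You need $v\notin\dot H^1$. This follows, but requires one more sentence: since the potential decays faster than $r^{-4}$, the radial solutions at infinity behave like $c_1+c_2r^{2-d}$; for $d\geq5$ the piece $r^{2-d}$ is in $L^2$, so $v\notin L^2$ forces $c_1\neq0$, hence $v$ tends to a nonzero constant and in particular $v\notin L^{2^*}\supset\dot H^1$. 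Without this bridge the Fredholm step is incomplete, because a priori there could be a radial $\dot H^1$-solution of $\cL_0\phi=0$ that is not in $L^2$.
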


\begin{proof}
In the proof we remove the index `rad' and use the convention that all the operators are restricted to the sector of radial functions. Note that the following arguments work the same on $L^2(\R^d)$ for a general potential $V$ such that $-\Delta+V$ has no zero eigenvalue. But this of course not the case of our linearized operator $\cL_0$ which always has $\partial_{x_j}u_0$ in its kernel. Introducing the notation
$V_0:=pu_0^{p-1}-qu_0^{q-1}$
for the external potential, we start with the relation
\begin{equation}
 \cL_0=(-\Delta)^{\frac12}\left(1+(-\Delta)^{-\frac12}V_0(-\Delta)^{-\frac12}\right)(-\Delta)^{\frac12}.
\label{eq:Birman-Schwinger-type}
\end{equation}
Recalling that $u_0(r)\sim C_0r^{2-d}$ at infinity and that $q>1+4/(d-2)$, we obtain 
\begin{equation}
V_0\in L^s(\R^d),\qquad \forall s>\frac{d}{(d-2)(q-1)}>\frac{d}{4}.
\label{eq:prop_V_super_critical}
\end{equation}
In particular we have  $V_0\in L^{d/2}(\R^d)$. From the Hardy-Littlewood-Sobolev (HLS) inequality~\cite{LieLos-01}, we then know that the operator 
\begin{equation}
K:=(-\Delta)^{-\frac12}V_0(-\Delta)^{-\frac12}
\label{eq:def_K}
\end{equation}
is self-adjoint and compact on $L^2(\R^d)$, with 
$\norm{K}\leq C\norm{V_0}_{L^{d/2}(\R^d)}.$
The fact that $\ker(\cL_0)_{\rm rad}=\{0\}$ is equivalent to the property that 
$-1\notin \sigma\left(K\right)$
where we recall that $K$ is here only considered within the sector of radial functions. Let $g$ be a radial eigenfunction of $K$, corresponding to a discrete eigenvalue $\lambda\neq0$,
$$Kg=(-\Delta)^{-\frac12}V_0(-\Delta)^{-\frac12}g=\lambda g.$$
Multiplying by $(-\Delta)^{-1}$ on the left we find
\begin{equation}
 (-\Delta)^{-1}g=\lambda^{-1} (-\Delta)^{-\frac32}V_0(-\Delta)^{-\frac12}g.
 \label{eq:estim_eigenfn_K}
\end{equation}
Using this time $d>4$ and again the HLS inequality, we see that the operator $(-\Delta)^{-3/2}V_0(-\Delta)^{-1/2}$ is compact with 
$$\norm{(-\Delta)^{-\frac32}V_0(-\Delta)^{-\frac12}}\leq C\norm{V_0}_{L^{d/4}(\R^d)}.$$
This proves that $(-\Delta)^{-1}g\in L^2(\R^d)$. Next we go back to~\eqref{eq:Birman-Schwinger-type}. Since $\cL_0$ has one negative eigenvalue, this implies that $K$ has at least one eigenvalue $<-1$, for otherwise $\cL_0$ would be positive by~\eqref{eq:Birman-Schwinger-type}. On the other hand, $K$ cannot have two eigenvalues $<-1$ otherwise after testing against this subspace and using that the corresponding eigenfunctions are in the domain of $(-\Delta)^{-1/2}$, we would find that $\cL_0$ is negative on a subspace of dimension 2. We conclude that $K$ has exactly one simple eigenvalue $\nu_1<-1$ within the radial sector and call the corresponding normalized eigenfunction $g_1$. Next we invert~\eqref{eq:Birman-Schwinger-type} and obtain 
\begin{equation}
  (\cL_0)_{\rm rad}^{-1}=(-\Delta)^{-\frac12}\left(1+K\right)^{-1}(-\Delta)^{-\frac12}.
 \label{eq:inverting_L_0}
\end{equation}
We have 
$$\frac{1}{1+\nu_2}-\left(\frac{1}{1+\nu_2}-\frac{1}{1+\nu_1}\right)|g_1\rangle\langle g_1|\leq \frac1{1+K}\leq \frac{1}{1+\nu_2}$$
with $\nu_2=\min\sigma(K)\setminus\{\nu_1\}>-1$. This gives 
\begin{equation}
\frac{(-\Delta)^{-1}}{1+\nu_2}-c\norm{(-\Delta)^{-\frac12}g_1}^{2}\leq  (\cL_0)_{\rm rad}^{-1}\leq \frac{(-\Delta)^{-1}}{1+\nu_2}
\label{eq:compare_L_Delta_proof}
\end{equation}
with
$$c=\frac{1}{1+\nu_2}-\frac{1}{1+\nu_1}>0.$$
The norm on the left of~\eqref{eq:compare_L_Delta_proof} is finite since we have even proved that $(-\Delta)^{-1}g_1\in L^2(\R^d)$ and this concludes the proof of the lemma. One can actually show that the domains of $(\cL_0)^{-1}_{\rm rad}$ and $(-\Delta)^{-1}_{\rm rad}$ coincide, not just the quadratic form domains, but this is not needed in our argument. 
\end{proof}

Next we turn to the upper  bound on $M'(\mu)$. 

\begin{lemma}[Upper bound on the limit of $M'(\mu)$]
Let $d\geq5$ and $p>q>1+{4}/{(d-2)}$. Then we have 
\begin{equation}
 \limsup_{\mu\searrow0}M'(\mu)\leq -2\pscal{u_0,(\cL_0)^{-1}_{\rm rad}u_0}\in[-\ii,\ii),
 \label{eq:limsup_derivative_super_critical}
\end{equation}
interpreted in the sense of quadratic forms. In dimensions $d=5,6$ the right side equals $-\ii$ whereas it is finite in dimensions $d\geq7$. 
\end{lemma}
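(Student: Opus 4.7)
The strategy is to prove the liminf lower bound
\[
\liminf_{\mu\searrow 0}\pscal{u_\mu,(\cL_\mu)^{-1}_{\rm rad}u_\mu}\geq \pscal{u_0,(\cL_0)^{-1}_{\rm rad}u_0},
\]
interpreted via the spectral theorem and possibly equal to $+\infty$. Combined with~\eqref{eq:M'(mu)} this gives the claimed upper bound on $\limsup_{\mu\searrow 0}M'(\mu)$. Since $\cL_\mu$ has a unique negative eigenvalue $-\nu_\mu$ in the radial sector with normalized eigenfunction $\phi_\mu$, I decompose $u_\mu=a_\mu\phi_\mu+w_\mu$ with $a_\mu=\pscal{\phi_\mu,u_\mu}$ and $w_\mu\perp\phi_\mu$, so that
\[
\pscal{u_\mu,(\cL_\mu)^{-1}_{\rm rad}u_\mu}=-\frac{a_\mu^2}{\nu_\mu}+\sup_{v\in H^1_{\rm rad}(\R^d),\,v\perp\phi_\mu}\Big\{2\pscal{u_\mu,v}-\pscal{v,\cL_\mu v}\Big\}
\]
by completing the square on the positive operator $\cL_\mu|_{\phi_\mu^\perp}$.

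The first ingredient is the norm-resolvent convergence $\cL_\mu\to\cL_0$: since $u_\mu\to u_0$ uniformly by~\cite{MorMur-14} and $\mu\to 0$, the potential $V_\mu:=pu_\mu^{p-1}-qu_\mu^{q-1}+\mu$ converges to $V_0=pu_0^{p-1}-qu_0^{q-1}$ in $L^\infty(\R^d)$, so $\cL_\mu-\cL_0$ is a bounded perturbation of vanishing operator norm. Because the eigenvalue $-\nu_0$ of $\cL_0$ is isolated below the essential spectrum $[0,\infty)$, this yields $\nu_\mu\to\nu_0$ and $\phi_\mu\to\phi_0$ in $L^2_{\rm rad}$. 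Together with $u_\mu\to u_0$ in $L^2$ (valid for $d\geq 5$), this gives $a_\mu\to a_0$ and thus $-a_\mu^2/\nu_\mu\to -a_0^2/\nu_0$.

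For the variational part, I fix a radial $v\in C^\infty_c(\R^d)$ with $\pscal{v,\phi_0}=0$ and set $v_\mu:=v-c_\mu\phi_\mu\in\phi_\mu^\perp$ where $c_\mu:=\pscal{v,\phi_\mu}\to 0$. A short calculation using $\cL_\mu\phi_\mu=-\nu_\mu\phi_\mu$ shows
\[
2\pscal{u_\mu,v_\mu}-\pscal{v_\mu,\cL_\mu v_\mu}= 2\pscal{u_\mu,v}-\pscal{v,\cL_\mu v}-c_\mu^2\nu_\mu-2c_\mu a_\mu,
\]
and every term on the right converges to $2\pscal{u_0,v}-\pscal{v,\cL_0 v}$ thanks to the $L^\infty$ convergence of $V_\mu$ on $\supp v$. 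Taking the liminf of the supremum, then the supremum over all such $v$, and using the density of $\{v\in C^\infty_c(\R^d)\cap\phi_0^\perp\}$ in $H^1_{\rm rad}(\R^d)\cap\phi_0^\perp$, one recovers the variational representation of $\pscal{w_0,(\cL_0|_{\phi_0^\perp})^{-1}w_0}\in[0,+\infty]$, where $w_0=u_0-a_0\phi_0$. Summing with the discrete part yields the desired liminf inequality.

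To pin down the dimensions where the limit equals $-\infty$, I apply Lemma~\ref{lem:quadratic_form_domain_inverse_L} to obtain
\[
\pscal{u_0,(\cL_0)^{-1}_{\rm rad}u_0}\geq\tfrac1C\pscal{u_0,(-\Delta)^{-1}_{\rm rad}u_0}-C\norm{u_0}_{L^2(\R^d)}^2.
\]
With $u_0(x)\sim C_0|x|^{2-d}$ at infinity (from Appendix~\ref{app:u_0}), the Riesz convolution $(-\Delta)^{-1}u_0$ decays like $|x|^{4-d}$, so the double integral scales as $\int^\infty r^{5-d}\,dr$: divergent for $d\in\{5,6\}$ and convergent for $d\geq 7$. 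Combined with the matching upper bound from Lemma~\ref{lem:quadratic_form_domain_inverse_L}, this gives $\pscal{u_0,(\cL_0)^{-1}_{\rm rad}u_0}=+\infty$ in dimensions $5,6$ and a finite value in dimensions $\geq 7$, proving the dichotomy. The delicate step in this plan is the combination of norm-resolvent convergence with the eigenfunction-perturbation statement, which requires the isolation of $-\nu_0$ from $[0,\infty)$ since the essential spectrum of $\cL_0$ already touches the origin.
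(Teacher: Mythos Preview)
Your argument is correct and follows the same overall strategy as the paper: split $(\cL_\mu)^{-1}_{\rm rad}$ into the contribution of the single negative eigenvalue and the positive remainder, use the norm-resolvent convergence $\cL_\mu\to\cL_0$ together with $u_\mu\to u_0$ in $L^2(\R^d)$ (for $d\geq5$) to pass to the limit in each piece, and then appeal to Lemma~\ref{lem:quadratic_form_domain_inverse_L} and the decay $u_0(r)\sim C_0 r^{2-d}$ for the $d\in\{5,6\}$ versus $d\geq7$ dichotomy. The only technical difference is in how the positive part is treated: the paper uses the monotone regularization $(\cL_\mu)_+^{-1}\geq(\cL_\mu+\eps)_+^{-1}$, passes to the limit at fixed $\eps>0$ (where the regularized inverse is bounded and converges in norm), and then lets $\eps\searrow0$; you instead invoke the variational identity $\pscal{w,(\cL_\mu)_+^{-1}w}=\sup_{v\perp\phi_\mu}\{2\pscal{w,v}-\pscal{v,\cL_\mu v}\}$ and pass to the limit on fixed compactly supported test functions before taking the supremum. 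Both devices are standard and interchangeable in this setting.
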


From the proof of Lemma~\ref{lem:quadratic_form_domain_inverse_L}, the proper interpretation of the quadratic form on the right side of~\eqref{eq:limsup_derivative_super_critical} is
$$\pscal{u_0,(\cL_0)^{-1}_{\rm rad}u_0}:=\pscal{(-\Delta)^{-\frac12}u_0,(1+K)^{-1}(-\Delta)^{-\frac12}u_0}$$
where $K$ is given by~\eqref{eq:def_K}.

\begin{proof}
We start with
\begin{align*}
 M'(\mu)= -2\pscal{u_\mu,(\cL_\mu)^{-1}_{\rm rad}u_\mu}&=-\frac{2|\pscal{f_{1,\mu},u_\mu}|^2}{\lambda_1(\mu)} -2\pscal{u_\mu,(\cL_\mu)_+^{-1}u_\mu}\\
&\leq-\frac{2|\pscal{f_{1,\mu},u_\mu}|^2}{\lambda_1(\mu)} -2\pscal{u_\mu,(\cL_\mu+\eps)_+^{-1}u_\mu}.
\end{align*}
Here $f_{1,\mu}$ is the unique eigenfunction corresponding to the negative eigenvalue $\lambda_1(\mu)$ of $\cL_\mu$ and $\eps$ is a small fixed number, chosen so that $\eps\leq |\lambda_1(\mu)|/2$. From the convergence of $\cL_\mu$ in the norm-resolvent sense and the strong convergence of $u_\mu$ in $L^2(\R^d)$, we obtain in the limit 
$$\limsup_{\mu\searrow0} M'(\mu)\leq -\frac{2|\pscal{f_{1},u_0}|^2}{\lambda_1} -2\pscal{u_0,(\cL_0+\eps)_+^{-1}u_0}$$
with of course $\cL_0f_1=\lambda_1 f_1$ and $\lambda_1<0$. Letting finally $\eps\to0^+$, this gives the limit~\eqref{eq:limsup_derivative_super_critical}.

From Lemma~\ref{lem:quadratic_form_domain_inverse_L} we know that the right side of~\eqref{eq:limsup_derivative_super_critical} is finite if and only if $\|(-\Delta)^{-1/2}u_0\|_{L^2}$ is finite. This turns out to be infinite in dimensions $d=5,6$ and finite in larger dimensions. The reason is the following. Since $r^{2-d}u_0(r)\to C_0>0$ at infinity, we can write $u_0=u_0\1_{B_1}+u_0\1_{\R^d\setminus B_1}:=u_1+u_2$ where $u_1\in (L^1\cap L^\ii)(\R^d)$ and 
$$\frac{c\1(|x|\geq1)}{|x|^{d-2}}\leq u_2(x)\leq \frac{C\1(|x|\geq1)}{|x|^{d-2}}.$$
But 
$$\norm{(-\Delta)^{-\frac12}u_0}^2_{L^2(\R^d)}=c \iint_{\R^d\times\R^d}\frac{u_0(x)u_0(y)}{|x-y|^{d-2}}\,dx\,dy$$
and the terms involving $u_1$ are finite by the HLS inequality, whereas the term involving $u_2$ twice is comparable to 
$$\iint_{\substack{|x|\geq1\\ |y|\geq1}}\frac{dx\,dy}{|x|^{d-2}|y|^{d-2}|x-y|^{d-2}}=\begin{cases}
+\ii&\text{when $d=5,6$,}\\
<\ii&\text{when $d\geq7$.}
\end{cases}
$$
This concludes the proof of the lemma. 
\end{proof}

We are left with showing the limit in dimensions $d\geq7$. 

\begin{lemma}[Limit of $M'(\mu)$ in dimensions $d\geq7$]
Let $d\geq7$ and $p>q>1+{4}/{(d-2)}$. Then we have 
\begin{equation}
 \lim_{\mu\searrow0}M'(\mu)= -2\pscal{u_0,(\cL_0)^{-1}_{\rm rad}u_0}.
 \label{eq:lim_derivative_super_critical}
\end{equation}
\end{lemma}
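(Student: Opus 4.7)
Since the preceding lemma has established the upper bound $\limsup_{\mu \to 0^+} M'(\mu) \leq -2 \langle u_0,(\cL_0)_{\rm rad}^{-1} u_0\rangle$ and the right-hand side is finite in $d\geq 7$, the plan is to prove the matching lower bound by establishing convergence of the quadratic forms $\langle u_\mu,(\cL_\mu)_{\rm rad}^{-1}u_\mu\rangle$. I would work in the Birman--Schwinger factorization from the proof of Lemma~\ref{lem:quadratic_form_domain_inverse_L}: with $W_\mu := p u_\mu^{p-1} - q u_\mu^{q-1}$ and $K_\mu := (-\Delta+\mu)^{-1/2} W_\mu (-\Delta+\mu)^{-1/2}$ on $L^2_{\rm rad}(\R^d)$, the identity $\cL_\mu = (-\Delta+\mu)^{1/2}(I+K_\mu)(-\Delta+\mu)^{1/2}$ yields
\[
M'(\mu) \;=\; -2\, \langle \phi_\mu,\, (I+K_\mu)^{-1}\phi_\mu\rangle, \qquad \phi_\mu := (-\Delta+\mu)^{-1/2} u_\mu,
\]
with target value $-2\langle \phi_0,(I+K_0)^{-1}\phi_0\rangle$ where $\phi_0 := (-\Delta)^{-1/2} u_0 \in L^2(\R^d)$ precisely because $d\geq 7$.

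The first step is strong $L^2$ convergence $\phi_\mu \to \phi_0$. Writing $\|\phi_\mu\|^2$ as a double integral of $u_\mu(x)u_\mu(y)$ against the Bessel Green's function $G_{-\Delta+\mu}$, which is nonnegative, dominated by $c_d|x-y|^{2-d}$ and converges pointwise to it, dominated convergence applied with the uniform tail bound $u_\mu(x)\leq C/(1+|x|^{d-2-\eps})$ from Section~\ref{sec:proof_mu_0} and the pointwise limit $u_\mu\to u_0$ yields $\|\phi_\mu\|\to\|\phi_0\|$. Weak convergence follows from testing against $\psi \in \cS(\R^d)$, noting that $(-\Delta+\mu)^{-1/2}\psi \to (-\Delta)^{-1/2}\psi$ in $L^2$ by dominated convergence on the Fourier side. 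Norm plus weak gives strong.

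The second step establishes (a) strong convergence $K_\mu \to K_0$ on $L^2(\R^d)$, and (b) the uniform bound $\sup_\mu \|(I+K_\mu)^{-1}\|<\infty$. For (a) I would use the decomposition
\begin{align*}
K_\mu - K_0 &= (-\Delta+\mu)^{-1/2}(W_\mu-W_0)(-\Delta+\mu)^{-1/2}\\
&\quad + [(-\Delta+\mu)^{-1/2}-(-\Delta)^{-1/2}]W_0(-\Delta+\mu)^{-1/2}\\
&\quad + (-\Delta)^{-1/2}W_0[(-\Delta+\mu)^{-1/2}-(-\Delta)^{-1/2}].
\end{align*}
The first piece vanishes in operator norm via the HLS bound $\|(-\Delta+\mu)^{-1/2}V(-\Delta+\mu)^{-1/2}\| \leq C\|V\|_{L^{d/2}}$ together with $\|W_\mu-W_0\|_{L^{d/2}}\to 0$ (by dominated convergence, using $(p-1)(d-2),(q-1)(d-2)>4$). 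For the two resolvent-difference pieces, once $W_0$ is applied one lands in $L^{2d/(d+2)}$ by H\"older (using the uniform Sobolev embedding $(-\Delta+\mu)^{-1/2}:L^2 \to L^{2d/(d-2)}$), and on such vectors $(-\Delta+\mu)^{-1/2}\to(-\Delta)^{-1/2}$ strongly in $L^2$ by dominated convergence on the Fourier side. For (b), argue by contradiction: a subsequence with $\|(I+K_\mu)^{-1}\|\to\infty$ produces unit vectors $\psi_\mu$ with $(I+K_\mu)\psi_\mu \to 0$; a weak limit $\psi$ satisfies $(I+K_0)\psi=0$, using compactness of $K_0$ (HLS, since $V_0 \in L^{d/2}$) together with the strong convergence of $K_\mu$ to pass to the limit in $K_\mu\psi_\mu$; the non-degeneracy $-1\notin\sigma(K_0|_{\rm rad})$ from Lemma~\ref{lem:quadratic_form_domain_inverse_L} forces $\psi=0$, whence $\psi_\mu=-K_\mu\psi_\mu+o(1)\to0$ strongly, contradicting $\|\psi_\mu\|=1$.

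Combining the above via
\begin{align*}
\langle\phi_\mu,(I{+}K_\mu)^{-1}\phi_\mu\rangle &- \langle\phi_0,(I{+}K_0)^{-1}\phi_0\rangle\\
&= \langle\phi_\mu-\phi_0,(I{+}K_\mu)^{-1}\phi_\mu\rangle + \langle\phi_0,(I{+}K_\mu)^{-1}(\phi_\mu-\phi_0)\rangle\\
&\quad- \langle\phi_0,(I{+}K_\mu)^{-1}(K_\mu{-}K_0)(I{+}K_0)^{-1}\phi_0\rangle,
\end{align*}
uniform boundedness and $\phi_\mu\to\phi_0$ dispatch the first two terms, while strong convergence of $(K_\mu{-}K_0)$ applied to the fixed vector $(I{+}K_0)^{-1}\phi_0$ dispatches the third, yielding $M'(\mu) \to -2\langle u_0,(\cL_0)_{\rm rad}^{-1}u_0\rangle$. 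I expect the main obstacle to be Step 2(a): $(-\Delta+\mu)^{-1/2}\to(-\Delta)^{-1/2}$ fails strongly on generic $L^2$ vectors due to the low-frequency singularity, so one must exploit the $L^{d/2}$-integrability and decay of $W_0$ to route the argument through the Sobolev-dual space $L^{2d/(d+2)}$ where the convergence does hold; the decay $u_0(x)\sim c|x|^{2-d}$ combined with $d\geq 7$ keeps everything just barely integrable.
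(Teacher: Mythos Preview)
Your approach is the paper's: the same Birman--Schwinger factorization $\cL_\mu=(-\Delta+\mu)^{1/2}(I+K_\mu)(-\Delta+\mu)^{1/2}$, then convergence of $\phi_\mu=(-\Delta+\mu)^{-1/2}u_\mu$ and of $K_\mu$. For $\phi_\mu\to\phi_0$ the paper uses the HLS bound $\|(-\Delta+\mu)^{-1/2}g\|_{L^2}\le C\|g\|_{L^{2d/(d+2)}}$ together with $u_\mu\to u_0$ in $L^{2d/(d+2)}$ (valid exactly when $d\ge7$), but your Green's-function route is fine.

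The substantive difference is in Step~2. The paper shows $K_\mu\to K_0$ in \emph{operator norm}, not merely strongly, via the factorization
\[
K_\mu=A_\mu\,\big[(-\Delta)^{-1/2}W_\mu(-\Delta)^{-1/2}\big]\,A_\mu,\qquad A_\mu:=\Big(\frac{-\Delta}{-\Delta+\mu}\Big)^{1/2}.
\]
The bracket converges in norm to the compact operator $K_0$ by HLS since $W_\mu\to W_0$ in $L^{d/2}$; $A_\mu$ is a contraction converging strongly to $I$; and a strongly convergent bounded family sandwiching a norm-convergent compact family converges in norm. This gives $(I+K_\mu)^{-1}\to(I+K_0)^{-1}$ in norm at once, so your (a) and (b) are both automatic and no contradiction argument is needed.

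Your contradiction argument for (b), as written, has a gap. From $\psi_\mu\rightharpoonup0$, compactness of $K_0$, and only strong convergence $K_\mu\to K_0$, you cannot conclude $K_\mu\psi_\mu\to0$ strongly: splitting $K_\mu\psi_\mu=K_0\psi_\mu+(K_\mu-K_0)\psi_\mu$, the first term vanishes but the second need not (take $K_0=0$, $K_\mu=|e_\mu\rangle\langle e_\mu|$ with $e_\mu\rightharpoonup0$, $\psi_\mu=e_\mu$). The fix is precisely the norm convergence above. In fact your own decomposition already yields it once you rewrite the resolvent-difference pieces as $(A_\mu-I)K_0A_\mu$ and $K_0(A_\mu-I)$, each of which tends to $0$ in norm because $K_0$ is compact and $A_\mu\to I$ strongly.
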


\begin{proof}
Similarly to~\eqref{eq:Birman-Schwinger-type} we can write
\begin{equation}
 \cL_\mu=(-\Delta+\mu)^{\frac12}\left(1+K_\mu\right)(-\Delta+\mu)^{\frac12}
 \label{eq:Birman-Schwinger-type_mu}
\end{equation}
where
\begin{align*}
K_\mu&:=(-\Delta+\mu)^{-\frac12}\left(pu_\mu^{p-1}-qu_\mu^{q-1}\right)(-\Delta+\mu)^{-\frac12}\\
&=\left(\frac{-\Delta}{-\Delta+\mu}\right)^{\frac12}(-\Delta)^{-\frac12}\left(pu_\mu^{p-1}-qu_\mu^{q-1}\right)(-\Delta)^{-\frac12}\left(\frac{-\Delta}{-\Delta+\mu}\right)^{\frac12}.
\end{align*}
Since $pu_\mu^{p-1}-qu_\mu^{q-1}\to pu_0^{p-1}-qu_0^{q-1}=V_0$ in $L^{d/2}(\R^d)$, we have 
$$(-\Delta)^{-\frac12}\left(pu_\mu^{p-1}-qu_\mu^{q-1}\right)(-\Delta)^{-\frac12}\to K=(-\Delta)^{-\frac12}V_0(-\Delta)^{-\frac12}$$
in operator norm, by the HLS inequality. On the other hand, the operator $(-\Delta)^{1/2}(-\Delta+\mu)^{-1/2}$ is bounded by 1 and converges to the identity strongly. Since $K$ is compact, this shows that $K_\mu\to K$ in operator norm. In particular, the spectrum of $K_\mu$ converges to that of $K$ and, since $1+K$ is invertible, we conclude that $(1+K_\mu)^{-1}$ is bounded and converges in norm towards $(1+K)^{-1}$. This allows us to invert~\eqref{eq:Birman-Schwinger-type_mu} and obtain
$$(\cL_\mu)_{\rm rad}^{-1}=(-\Delta+\mu)^{-\frac12}\left(1+K_\mu\right)^{-1}(-\Delta+\mu)^{-\frac12}$$
as well as
$$M'(\mu)=-2\pscal{(-\Delta+\mu)^{-\frac12}u_\mu,\big(1+K_\mu\big)^{-1}(-\Delta+\mu)^{-\frac12}u_\mu}.$$
From the HLS inequality we have 
\begin{align*}
&\norm{(-\Delta+\mu)^{-\frac12}u_\mu- (-\Delta)^{-\frac12}u_0}_{L^2(\R^d)}\\
&\leq  \norm{(-\Delta+\mu)^{-\frac12}(u_\mu-u_0)}_{L^2(\R^d)}+\norm{\big((-\Delta+\mu)^{-\frac12}- (-\Delta)^{-\frac12}\big)u_0}_{L^2(\R^d)}\\
&\leq  C\norm{u_\mu-u_0}_{L^{\frac{2d}{d+2}}(\R^d)}+\norm{\big((-\Delta+\mu)^{-\frac12}- (-\Delta)^{-\frac12}\big)u_0}_{L^2(\R^d)}
\end{align*}
which tends to zero since $u_\mu$ converges in $L^{\frac{2d}{d+2}}(\R^d)$ and $(-\Delta)^{-1/2}u_0\in L^2(\R^d)$ for $d\geq7$. Thus we can pass to the limit and obtain
\begin{equation*}
\lim_{\mu\searrow0}M'(\mu)=-2\pscal{(-\Delta)^{-\frac12}u_0,\big(1+K\big)^{-1}(-\Delta)^{-\frac12}u_0},
\end{equation*}
where the right side is the definition of the quadratic form of $(\cL_0)^{-1}_{\rm rad}$.
\end{proof}

\begin{remark}[Higher derivatives]\label{rmk:higher_derivatives}
A calculation shows that 
\begin{equation}
 M''(\mu)=6\int_{\R^d}\delta_\mu^2-2p(p-1)\int_{\R^d}\delta_\mu^3u_\mu^{p-2}+2q(q-1)\int_{\R^d}\delta_\mu^3u_\mu^{q-2}
 \label{eq:derivee_seconde}
\end{equation}
with $\delta_\mu:=(\cL_\mu)^{-1}_{\rm rad}u_\mu$. From the ODE we have $\delta_\mu(r)\leq Cr^{4-d+\eps}$ for $r\geq1$ and this can be used to show that the two terms involving $\delta_\mu^3$ converge in dimensions $d\geq7$. In dimensions $d\in\{7,8\}$ the first term has to diverge because $\delta_0\notin L^2(\R^d)$. Thus we have 
$$\lim_{\mu\searrow0}M''(\mu)=\begin{cases}
+\ii&\text{if $d\in\{7,8\}$,}\\
M''(0)\in\R&\text{if $d\geq9$.}
\end{cases}$$
By induction, it is possible to prove that $(-1)^kM^{(k)}(\mu)\to+\ii$ for a sufficiently large $k$ in any dimension $d\geq3$. 
\end{remark}

This concludes the proof of Theorem~\ref{thm:limit_mu_0}.\qed

\section{Proof of Theorem~\ref{thm:limit_mu_star} on the limit $\mu\to\mu_*$}\label{sec:proof_mu_star}

Throughout the proof we set $G_*:=G_{\mu_*}$ and $g_*:=g_{\mu_*}$. 

\subsection{Local convergence}
Our first step is to prove that $u_\mu$ almost satisfies the one-dimensional equation of $U_*$ and to prove the local convergence $u_\mu(r)\to\beta_*$ for any fixed $r$, which was claimed after Theorem~\ref{thmpowernl}.

\begin{lemma}[Local convergence]
We have 
\begin{equation}
\norm{(u_\mu')^2+2G_*(u_\mu)}_{L^\ii(\R_+)}\leq \mu_*-\mu,
 \label{eq:limit_eq}
\end{equation}
\begin{equation}
\norm{u_\mu'+\sqrt{-2G_*(u_\mu)}}_{L^\ii(\R_+)}\leq \sqrt{\mu_*-\mu},
 \label{eq:estim_square_root}
\end{equation}
and
\begin{equation}
\lim_{\mu\to\mu_*}u_\mu(r)=\beta_*,\qquad \lim_{\mu\to\mu_*}u'_\mu(r)=0. 
\end{equation}
uniformly on any compact interval $[0,R]$. 
\end{lemma}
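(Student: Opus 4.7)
The plan is to exploit the Hamiltonian structure of the radial ODE. Multiplying $u_\mu''+\tfrac{d-1}{r}u_\mu'+g_\mu(u_\mu)=0$ by $u_\mu'$ yields
$$\frac{d}{dr}\left[\frac{(u_\mu')^2}{2}+G_\mu(u_\mu)\right]=-\frac{d-1}{r}(u_\mu')^2\le 0,$$
so this ``energy'' is non-increasing on $(0,\ii)$. Combined with $u_\mu'(0)=0$ and $u_\mu(r),u_\mu'(r)\to 0$ at infinity, it is pinched between $0$ and $G_\mu(u_\mu(0))$. Writing $G_\mu=G_*+\tfrac{\mu_*-\mu}{2}|u|^2$ and using $G_*\le 0$ on $\R_+$ together with $u_\mu(0)<\beta_\mu<1$ (valid in a neighborhood of $\mu_*$, since $\beta_\mu\to\beta_*<1$) gives $G_\mu(u_\mu(0))\le(\mu_*-\mu)/2$, hence
$$0\le (u_\mu')^2+2G_\mu(u_\mu)\le \mu_*-\mu.$$

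From the decomposition $(u_\mu')^2+2G_*(u_\mu)=[(u_\mu')^2+2G_\mu(u_\mu)]-(\mu_*-\mu)u_\mu^2$ and the above two-sided bound (together with $u_\mu\le 1$), one immediately gets~\eqref{eq:limit_eq}. For~\eqref{eq:estim_square_root}, use radial monotonicity $u_\mu'\le 0$ so that $u_\mu'+\sqrt{-2G_*(u_\mu)}=\sqrt{-2G_*(u_\mu)}-|u_\mu'|$. For any $a,b\ge 0$ one has $(a-b)^2\le (a+b)|a-b|=|a^2-b^2|$; applied with $a=|u_\mu'|$, $b=\sqrt{-2G_*(u_\mu)}$ and combined with~\eqref{eq:limit_eq} this gives exactly $|u_\mu'+\sqrt{-2G_*(u_\mu)}|\le\sqrt{\mu_*-\mu}$.

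For the local convergence I would first verify, from the explicit formulas $\mu_*=\beta_*^{q-1}-\beta_*^{p-1}$ and $\beta_*^{p-q}=(q-1)(p+1)/((q+1)(p-1))$, that $g_*'(\beta_*)=(q-1)\beta_*^{q-1}(q-p)/(q+1)<0$. Hence $\beta_*$ is a non-degenerate double zero of $G_*$ and there exist $\eta,K>0$ with $-G_*(u)\le K(u-\beta_*)^2$ for $|u-\beta_*|\le\eta$. Combined with~\eqref{eq:estim_square_root},
$$|u_\mu'(s)|\le\sqrt{-2G_*(u_\mu(s))}+\sqrt{\mu_*-\mu}\le\sqrt{2K}\,|u_\mu(s)-\beta_*|+\sqrt{\mu_*-\mu}$$
as long as $|u_\mu(s)-\beta_*|\le\eta$. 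Integrating, using $u_\mu(0)\to\beta_*$ (established after Theorem~\ref{thmpowernl}) and a standard Gronwall argument gives $\sup_{r\in[0,R]}|u_\mu(r)-u_\mu(0)|\le Ce^{CR}(|u_\mu(0)-\beta_*|+\sqrt{\mu_*-\mu})$, which tends to $0$. A continuity/bootstrap argument ensures that $|u_\mu(s)-\beta_*|\le\eta$ indeed persists on $[0,R]$ once $\mu$ is close enough to $\mu_*$. This yields $u_\mu(r)\to\beta_*$ uniformly on $[0,R]$, and then $u_\mu'(r)\to 0$ uniformly follows directly from the pointwise bound above. The only mildly subtle point is the bootstrap, which is standard but deserves care; everything else is an elementary manipulation of the energy inequality.
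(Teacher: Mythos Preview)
Your proof is correct. The arguments for \eqref{eq:limit_eq} and \eqref{eq:estim_square_root} are essentially identical to the paper's: both exploit the monotone energy $H(r)=\tfrac12(u_\mu')^2+G_\mu(u_\mu)$ pinched between $0$ and $G_\mu(u_\mu(0))\le(\mu_*-\mu)/2$, then decompose $G_\mu=G_*+\tfrac{\mu_*-\mu}{2}u^2$ and use $(a-b)^2\le|a^2-b^2|$ for $a,b\ge0$.

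The local-convergence step is where you diverge. The paper retains the dissipation integral from the energy identity, obtaining
\[
(d-1)\int_0^\infty\frac{u_\mu'(s)^2}{s}\,ds=G_\mu(u_\mu(0))\le\frac{\mu_*-\mu}{2},
\]
and then applies Cauchy--Schwarz directly:
\[
|u_\mu(r)-u_\mu(0)|\le\int_0^r|u_\mu'|\le\frac{r}{\sqrt2}\left(\int_0^r\frac{(u_\mu')^2}{s}\,ds\right)^{1/2}\le\frac{r\sqrt{\mu_*-\mu}}{2\sqrt{d-1}}.
\]
This gives an explicit linear-in-$r$ bound with no bootstrap needed. Your route instead exploits the non-degenerate double zero of $G_*$ at $\beta_*$ (which you correctly verify via $g_*'(\beta_*)<0$) to feed \eqref{eq:estim_square_root} into a Gronwall inequality, at the cost of a continuity argument to keep $u_\mu$ in the neighborhood where the quadratic bound on $-G_*$ holds. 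Both work; the paper's is shorter and gives a cleaner quantitative rate, while yours avoids using the specific $r^{-1}$ weight from the friction term and would adapt more readily to variants of the equation.
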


We recall that $G_*$ is negative on $\R_+$ by definition of $\mu_*$. 

\begin{proof}
We start with the ODE
\begin{equation}
u_\mu''+(d-1)\frac{u_\mu'}{r}+g_\mu(u_\mu)=0.
 \label{eq:ODE}
\end{equation}
Multiplying by $u'_\mu$, we find
$$\frac{(u_\mu')^2}{2}+(d-1)\int_0^r\frac{u_\mu'(s)^2}{s}\,ds+G_\mu(u_\mu)=G_\mu(u_\mu(0)).$$
Evaluating at $r=+\ii$, this gives
\begin{equation}
 (d-1)\int_0^\ii\frac{u_\mu'(s)^2}{s}\,ds=G_\mu(u_\mu(0))=G_*(u_\mu(0))+\frac{\mu_*-\mu}{2}u_\mu(0)^2\leq\frac{\mu_*-\mu}{2}
 \label{eq:limit_d_term}
\end{equation}
since $G_*:=G_{\mu_*}\leq0$ and $0\leq u_\mu\leq1$. 
We can also rewrite the equation in the form
\begin{equation}
\frac{(u_\mu')^2}{2}+G_*(u_\mu)=(d-1)\int_r^\ii\frac{u_\mu'(s)^2}{s}\,ds-\frac12(\mu_*-\mu)u_\mu^2.
\label{eq:integrated_ODE}
\end{equation}
Due to~\eqref{eq:limit_d_term} we obtain~\eqref{eq:limit_eq}. Noticing that $|a^2-b^2|\leq \eps^2$ implies $|a-b|\leq \eps$ whenever $a,b\geq0$, we obtain~\eqref{eq:estim_square_root}. We also have 
\begin{equation}
|u_\mu(r)-u_\mu(0)|\leq \int_0^r|u'_\mu(s)|\,ds\leq \frac{r}{\sqrt2}\left(\int_0^r\frac{u'_\mu(s)^2}{s}\,ds\right)^{\frac12}\leq \frac{r\sqrt{\mu_*-\mu}}{2\sqrt{d-1}}
\label{eq:estim_uniform_local}
\end{equation}
and therefore we obtain the local convergence of $u_\mu$ to $\beta_*$, uniformly on any compact interval $[0,R]$. For the derivative we use~\eqref{eq:estim_square_root}.
\end{proof}

\subsection{Convergence of $u_\mu(\cdot+R_\mu)$}
Next we look at $u_\mu$ much further away. We fix $\gamma\in(0,\beta_*)$ and define $R_\mu$ like in the statement by the condition that 
$u_\mu(R_\mu)=\gamma$, for $\mu$ close enough to $\mu_*$. We then introduce 
$$v_\mu(r):=u_\mu(R_\mu+r),\qquad r\in[-R_\mu,\ii).$$
From~\eqref{eq:estim_uniform_local} we know that 
\begin{equation}
 R_\mu\geq \frac{2\sqrt{d-1}(u_\mu(0)-\gamma)}{\sqrt{\mu_*-\mu}}\to\ii.
 \label{eq:lower_bd_R_mu}
\end{equation}
The function $v_\mu$ satisfies a relation similar to~\eqref{eq:integrated_ODE}. It is uniformly bounded together with its derivative and we can pass to the uniform local limit $\mu\to\mu_*$, possibly after extraction of a subsequence. We obtain in the limit that $U_*=\lim_{\mu\to\mu_*}v_\mu$ solves~\eqref{eq:U}. That is, $U_*$ is the unique unstable solution of the $d=1$ Hamiltonian system, linking the two stationary points $\beta_*$ and $0$ and passing through $\gamma$ at $r=0$. More precisely, we have since $U_*'<0$
$$-\frac{U_*'}{\sqrt{2|G_*(U_*)|}}=1.$$
Therefore
$$U_*(r)=\Psi^{-1}(r),\qquad \Psi(v)=-\int_{\gamma}^{v}\frac{ds}{\sqrt{2|G_*(s)|}}.$$ 
Note that $\Psi$ diverges logarithmically at 0 and $\beta_*$, so that $U_*$ converges exponentially fast towards $\beta_*$ at $-\ii$ and $0$ at $+\ii$. To summarize the situation, we have for every $h>0$
\begin{multline}
\lim_{\mu\to\mu_*}\norm{u_\mu-U_*(\cdot-R_\mu)}_{L^\ii(R_\mu-h,R_\mu+h)}\\
=\lim_{\mu\to\mu_*}\norm{u'_\mu-U_*'(\cdot-R_\mu)}_{L^\ii(R_\mu-h,R_\mu+h)}=0
\label{eq:CV_Lii_local}
\end{multline}
where the convergence of the derivatives follows from~\eqref{eq:estim_square_root}. 

In the next lemma we derive pointwise bounds on $v_\mu=u_\mu(\cdot+R_\mu)$ and its derivatives which will later allow us to improve the limit~\eqref{eq:CV_Lii_local}. 

\begin{lemma}[Pointwise exponential bounds]\label{lem:exp_bounds}
We have the bounds
\begin{equation}
 v_\mu(r)\begin{cases}
\dps\leq Ce^{-c|r|}&\text{on $[0,\ii)$,}\\[0.3cm]
\dps\geq \beta_\mu-Ce^{-c|r|}&\text{on $[-R_\mu,0]$,}\\
  \end{cases}
 \label{eq:exp_decay}
\end{equation}
and
\begin{equation}
 |v'_\mu(r)|+|v''_\mu(r)|\leq Ce^{-c|r|}\qquad\text{on $[-R_\mu,\ii)$.}
 \label{eq:exp_decay_derivative}
\end{equation}
for some $c,C>0$ independent of $\mu\in[\mu_*/2,\mu_*)$. 
\end{lemma}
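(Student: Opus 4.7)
The plan is to establish the three bounds by analyzing the equation near each of the two equilibria $v_\mu = 0$ (for the right tail) and $v_\mu = \beta_\mu$ (for the left tail), plus the bounded transition layer in between. The structural input is that both linearizations are strictly hyperbolic uniformly in $\mu$ close to $\mu_*$: $g_\mu'(0)=-\mu$ stays uniformly negative, and $\kappa_\mu := -g_\mu'(\beta_\mu) \geq \kappa_*/2 > 0$ where $\kappa_* = -g_*'(\beta_*)$.

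For the bound on $[0,\infty)$, I would multiply the radial ODE for $v_\mu$ by $v_\mu'$ and integrate to $+\infty$, obtaining the energy inequality $(v_\mu')^2 \geq -2G_\mu(v_\mu)$. Since $-2G_\mu(s) \geq (\mu_*/2)s^2$ for $s$ in a uniform neighborhood of $0$ and $v_\mu'\leq 0$, this yields the differential inequality $v_\mu'/v_\mu \leq -\sqrt{\mu_*/2}$ on the set $\{v_\mu \leq \delta\}$, whence exponential decay by direct integration. The preceding range $\{v_\mu\in[\delta,\gamma]\}$ has uniformly bounded length because $-G_\mu$ is uniformly positive there (using $\gamma<\eta_\mu$ for $\mu$ close to $\mu_*$, so that $G_\mu<0$ on $[\delta,\gamma]$), making $|v_\mu'|$ uniformly bounded below on that interval.

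For the bound on $[-R_\mu,0]$, set $w_\mu := \beta_\mu - v_\mu \geq 0$. The radial PDE reads $\Delta w_\mu = g_\mu(\beta_\mu - w_\mu) = \kappa_\mu w_\mu + \tfrac12 g_\mu''(\beta_\mu)w_\mu^2 + O(w_\mu^3)$; a direct computation using $p>q$, $g_\mu(\beta_\mu)=0$ and $g_\mu'(\beta_\mu)<0$ gives $g_\mu''(\beta_\mu)<0$ uniformly, and hence $\Delta w_\mu \leq \kappa_\mu w_\mu$ as soon as $w_\mu$ is uniformly small. Pick a fixed $r_0 < 0$ such that the $L^\infty$-convergence $v_\mu\to U_*$ on $[r_0,0]$ together with the explicit exponential decay $\beta_*-U_*(r_0)\lesssim e^{-\sqrt{\kappa_*}|r_0|}$ guarantee $w_\mu\leq w_0$ (for a small uniform $w_0$) on $\{r\leq r_0\}$. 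Now compare $w_\mu$ on $B_{R_\mu+r_0}$ with the explicit positive solution
\begin{equation*}
\tilde I_\mu(\rho):=\rho^{(2-d)/2}I_{(d-2)/2}(\sqrt{\kappa_\mu}\,\rho)
\end{equation*}
of $(\Delta-\kappa_\mu)\tilde I_\mu=0$, which is smooth at the origin and grows like $\rho^{(1-d)/2}e^{\sqrt{\kappa_\mu}\rho}$ at infinity. The ratio $\phi := w_\mu/\tilde I_\mu$ satisfies an elliptic equation with drift but no zero-order term, namely $\Delta\phi + 2\nabla\log\tilde I_\mu\cdot\nabla\phi \leq 0$, so by the weak maximum principle $\phi$ attains its maximum on $\partial B_{R_\mu+r_0}$:
\begin{equation*}
w_\mu(\rho) \;\leq\; \frac{w_\mu(R_\mu+r_0)}{\tilde I_\mu(R_\mu+r_0)}\,\tilde I_\mu(\rho) \qquad \text{for } \rho\in[0,R_\mu+r_0].
\end{equation*}
The Bessel asymptotics translate this into $w_\mu(\rho)\leq Ce^{-c(R_\mu-\rho)}=Ce^{-c|r|}$ for any $c<\sqrt{\kappa_*}$, the polynomial $R_\mu$-prefactor being absorbed by choosing $c$ strictly less than $\sqrt{\kappa_*}$. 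On $r\in[r_0,0]$ the bound reduces to the $L^\infty$-convergence $v_\mu\to U_*$ and the exponential asymptotics of $U_*$ at $-\infty$. An instructive byproduct is $\beta_\mu-u_\mu(0) = O(R_\mu^{(d-1)/2}e^{-\sqrt{\kappa_\mu}R_\mu})$.

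The derivative bounds in (3) follow from a Riccati analysis of the logarithmic derivative $y = v_\mu'/v_\mu$ (respectively $w_\mu'/w_\mu$) in the tails: the equation $y' + y^2 + (d-1)(R_\mu+r)^{-1}y = -g_\mu(v_\mu)/v_\mu$ pushes $y$ toward the stable root of the corresponding characteristic equation ($-\sqrt\mu$ at $+\infty$ and $\sqrt{\kappa_\mu}$ at $-R_\mu$), giving $|v_\mu'|\lesssim v_\mu$ and $|w_\mu'|\lesssim w_\mu$ in the respective regimes; the bound on $|v_\mu''|$ is then immediate from the ODE $v_\mu''(r) = -(d-1)(R_\mu+r)^{-1}v_\mu'(r) - g_\mu(v_\mu(r))$, using that $|g_\mu(v_\mu)|\lesssim\min(v_\mu,w_\mu)$ is exponentially small. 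The hard part is the comparison in (2): one must certify that the nonlinear correction $g_\mu''(\beta_\mu)w_\mu^2/2$ retains the right sign throughout the inner ball (where $w_\mu$ is not yet negligible), and must absorb the polynomial prefactor in $R_\mu$ from the modified-Bessel asymptotics without sacrificing uniformity of the rate $c$.
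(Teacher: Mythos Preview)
Your proof is correct and arrives at the same bounds, but the route differs from the paper's in all three parts. For the right tail, the paper works in the original variable and uses the PDE maximum principle, comparing $u_\mu$ with the supersolution $e^{-c(|x|-R_\mu-h)}$ on $\R^d\setminus B_{R_\mu+h}$; your ODE energy argument $(v_\mu')^2\ge -2G_\mu(v_\mu)$ followed by Gronwall is an equally valid and perhaps more self-contained alternative. For the inner ball, the paper also linearizes (using only $g'_\mu(\beta_\mu)<0$, not the sign of $g''_\mu(\beta_\mu)$) but compares with the explicit barrier $e^{c|x|}/|x|^{(d-1)/2}$, which fails in $d=2$ and forces a separate one-dimensional argument on $w_\mu=\sqrt r\,(\beta_\mu-u_\mu)$; your modified-Bessel comparison $\tilde I_\mu(\rho)=\rho^{(2-d)/2}I_{(d-2)/2}(\sqrt{\kappa_\mu}\,\rho)$ is cleaner here since it is regular at the origin in every dimension and handles $d=2$ automatically. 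For the derivatives, the paper simply integrates $(r^{d-1}u'_\mu)'=-r^{d-1}g_\mu(u_\mu)$ against the already-established exponential bounds, which is more direct than your Riccati analysis and avoids having to track the logarithmic derivative near the singular point $r=-R_\mu$; I would recommend switching to that argument for the derivative step, as the Riccati control of $w_\mu'/w_\mu$ near the origin (where the damping term $(d-1)/\rho$ diverges) is the one place your sketch is thin.
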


\begin{proof}
Due to the local uniform convergence of $u_\mu$ around $R_\mu$ and the fact that $u_\mu$ is decreasing, we deduce that for all $\eps>0$ we can find an $h>0$ such that 
$$u_\mu\begin{cases}
\geq\beta_*-\eps &\text{on $[0,R_\mu-h]$}\\
\leq\eps &\text{on $[R_\mu+h,\ii)$}        
       \end{cases}.$$
We have $g_*'(0)=-\mu_*<0$ and $g_*'(\beta_*)<0$. Therefore, choosing $\eps$ small enough, we obtain 
$$g_\mu(u_\mu)\begin{cases}
\geq \frac{g_*'(\beta_*)}2 (u_\mu-\beta_\mu)&\text{on $[0,R_\mu-h]$,}\\
\leq-\frac{\mu_*}2 u_\mu&\text{on $[R_\mu+h,\ii)$.}
       \end{cases}$$
In other words, $u_\mu$ satisfies 
$$\begin{cases}
-\Delta (\beta_\mu-u_\mu)+c^2(\beta_\mu-u_\mu)\leq 0 &\text{on the ball $B_{R_\mu-h}$}\\
-\Delta u_\mu+c^2u_\mu\leq 0 &\text{on $\R^d\setminus B_{R_\mu+h}$.}
       \end{cases}$$
for $c^2=\min(\mu_*,|g_*'(\beta_*)|)/2$. Next we recall that 
$$(-\Delta+c^2)e^{-\alpha |x|}=\left(c^2-\alpha^2+\frac{d-1}{|x|}\alpha\right)e^{-\alpha|x|}$$
in the sense of distributions on $\R^d$. On $\R^d\setminus B_{R_\mu+h}$ we choose $\alpha=c$ and obtain that 
$$\left(-\Delta +c^2\right)\left(u_\mu-u_\mu(R_\mu+h)e^{-c(|x|-R_\mu-h)}\right)\leq 0 \qquad\text{on $\R^d\setminus B_{R_\mu+h}$.}$$
By the maximum principle we obtain
$$u_\mu\leq u_\mu(R_\mu+h)e^{-c(|x|-R_\mu-h)}\leq Ce^{-c(|x|-R_\mu)}$$
with $C=e^{ch}$. This is the first bound in~\eqref{eq:exp_decay} on $\R^d\setminus B_{R_\mu+h}$. 
To prove the estimate on $B_{R_\mu-h}$, we recall that 
$$(-\Delta+c^2)\frac{e^{\alpha |x|}}{|x|^{\frac{d-1}2}}=\left(c^2-\alpha^2+\frac{(d-1)(d-3)}{4|x|^2}\right)\frac{e^{\alpha |x|}}{|x|^{\frac{d-1}2}}.$$
In dimension $d\geq3$ the right side is always non-negative for $\alpha^2\leq c^2$. We can then simply take $\alpha=c$ and obtain similarly as before that 
$$\beta_\mu-u_\mu\leq \frac{e^{c(|x|-R_\mu+h)}}{|x|^{\frac{d-1}2}}.$$
Note that the bound is blowing up at the origin but it gives us~\eqref{eq:exp_decay} for $r\geq1$, with $C=e^{ch}$. For $r\leq1$ we can simply use that, since $u_\mu$ is decreasing,
$$\beta_\mu-u_\mu\leq \beta_\mu-u_\mu(1)\leq e^{c(1-R_\mu+h)}\leq e^{c(1+h)}e^{c(|x|-R_\mu)}.$$
Finally, upon increasing again the constant $C$ to cover the interval $[R_\mu-h,R_\mu+h]$ where $u_\mu$ is bounded by 1, we obtain~\eqref{eq:exp_decay} in dimensions $d\geq3$.

As usual, the two-dimensional case requires a bit more care. When $d=2$ we introduce $w_\mu:=\sqrt{r}(\beta_\mu-u_\mu)$ which satisfies
$$\left(-w_\mu''+\frac{c^2}4w_\mu\right)\leq\left(\frac{1}{r^2}-3c^2\right) \frac{w_\mu}{4}\leq 0,\qquad\text{on $\left[\frac{1}{\sqrt3 c},R_\mu-h\right]$.}$$
The function 
$$z_\mu:=e^{-\frac{c}2r}\left(w'_\mu+\frac{c}2 w_\mu\right)=e^{-cr}\left(e^{\frac{c}2r}w_\mu\right)'$$
satisfies $z'_\mu\geq0$ on $\big[1/(\sqrt3 c),R_\mu-h\big]$ and therefore we find
$$\left(e^{\frac{c}2r}w_\mu\right)'\leq  z_\mu(R_\mu-h)e^{cr}\qquad \text{for all}\quad \frac{1}{\sqrt3 c}\leq r\leq R_\mu-h.$$
Integrating over $\big[1/(\sqrt3 c),R_\mu-h\big]$ and using that $z_\mu$ and $w_\mu$ are increasing, we obtain
$$\beta_\mu-u_\mu(r)\leq  \left(e^{1/\sqrt{3}}+\frac{1}{c}\right)\frac{z_\mu(R_\mu-h)}{\sqrt{r}}e^{\frac{c}2r}\qquad \text{for all}\quad \frac{1}{\sqrt3 c}\leq r\leq R_\mu-h.$$
Using that $u_\mu'(R_\mu-h)$ and $u_\mu(R_\mu-h)$ are bounded, we have 
$$z_\mu(R_\mu-h)\leq C\sqrt{R_\mu-h}\;e^{-\frac{c}2(R_\mu-h)}$$
for some constant $C$, and hence we have shown the bound
$$\beta_\mu-u_\mu(r)\leq \left(e^{1/\sqrt{3}}+\frac{1}{c}\right)  C e^{\frac{c}2(r-R_\mu+h)}\qquad \text{for all}\quad \frac{1}{\sqrt3 c}\leq r\leq R_\mu-h.$$
Increasing the constant $C$ to get the bound on $[0,(\sqrt3 c)^{-1}]\cup[R_\mu-h,R_\mu+h]$, we obtain~\eqref{eq:exp_decay} in dimension $d=2$ as well.

Next we turn to the derivatives. The equation~\eqref{eq:ODE} can also be rewritten in the form
\begin{equation}
 \left(r^{d-1}u_\mu' \right)'=-r^{d-1}g_\mu(u_\mu).
 \label{eq:ODE2}
\end{equation}
After integrating over $[r,\ii)$ and on $[0,r]$, this gives the estimate on $|v_\mu'(r)|$ in~\eqref{eq:exp_decay_derivative} after using~\eqref{eq:exp_decay} together with
$$|g_\mu(v)|\leq \begin{cases}C|v| & \text{for $0\leq v\leq \gamma$,}\\
C(\beta_\mu-v) &\text{for $\gamma\leq v\leq \beta_\mu$.}
\end{cases}$$
Note that for $r\leq1$ we have the more precise bound
\begin{equation}
 |u_\mu'(r)|=\frac1{r^{d-1}}\int_0^rs^{d-1}|g_\mu(u_\mu(s))|\,ds\leq Cre^{-R_\mu}.
 \label{eq:exp_decay_derivative_origin}
\end{equation}
For the second derivative we can therefore use the equation~\eqref{eq:ODE} to obtain the corresponding bound in~\eqref{eq:exp_decay_derivative}.
\end{proof}

We are now able to prove the convergence~\eqref{eq:limit_mu_mu_*} of the statement, even though we still do not know the behavior of $R_\mu$ in terms of $\mu$.

\begin{lemma}[Global convergence]\label{lem:CV_u_derivative}
We have the uniform convergence
\begin{equation}
 \lim_{\mu\to\mu_*}\norm{u_\mu-U_*(\cdot-R_\mu)}_{L^\ii(\R_+)}=0
 \label{eq:CV_Lii}
\end{equation}
and the convergence of the derivatives
\begin{equation}
 \lim_{\mu\to\mu_*}\norm{u_\mu'-U_*'(\cdot-R_\mu)}_{L^p(\R_+)}=0,
 \label{eq:CV_derivative}
\end{equation}
in $L^p(\R_+)$ for all $1\leq p\leq\ii$.
\end{lemma}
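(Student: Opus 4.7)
The plan is to split $\R_+$ into three regions: an inner region $[0,R_\mu-h]$ where both $u_\mu$ and $U_*(\cdot-R_\mu)$ sit close to $\beta_*$, a middle transition region $[R_\mu-h,R_\mu+h]$ on which we already control things by the local convergence~\eqref{eq:CV_Lii_local}, and an outer region $[R_\mu+h,\infty)$ on which both functions are exponentially small. The constant $h$ will be taken large but fixed, then $\mu\to\mu_*$.

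First, recall that $U_*$ satisfies $U_*'=-\sqrt{-2G_*(U_*)}$, and since $G_*$ has a non-degenerate zero at $0$ (because $g_*'(0)=-\mu_*<0$) and a double zero at $\beta_*$ with $G_*''(\beta_*)=g_*'(\beta_*)<0$, the integral defining $\Psi^{-1}$ gives $U_*(r)\leq Ce^{-cr}$ for $r\geq 0$ and $\beta_*-U_*(r)\leq Ce^{-c|r|}$ for $r\leq 0$, together with matching exponential bounds on $U_*'$ and $U_*''$ obtained from the ODE. Combining this with the exponential bounds of Lemma~\ref{lem:exp_bounds} on $v_\mu=u_\mu(\cdot+R_\mu)$, we have for every $\eta>0$ a fixed $h=h(\eta)$ such that both $|U_*(r-R_\mu)-\beta_*|\leq\eta$, $|u_\mu(r)-\beta_\mu|\leq\eta$ on $[0,R_\mu-h]$ and $|U_*(r-R_\mu)|\leq\eta$, $|u_\mu(r)|\leq\eta$ on $[R_\mu+h,\infty)$, uniformly in $\mu$ close to $\mu_*$. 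Since $\beta_\mu\to\beta_*$, this bounds $\|u_\mu-U_*(\cdot-R_\mu)\|_{L^\infty}$ by $3\eta$ on the two outer regions. On the middle region $[R_\mu-h,R_\mu+h]$, which has fixed length $2h$, the local uniform convergence~\eqref{eq:CV_Lii_local} does the job. Letting $\mu\to\mu_*$ and then $\eta\to0$ gives~\eqref{eq:CV_Lii}.

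For the derivatives, the $L^\infty$ case is immediate: on the middle region we again use~\eqref{eq:CV_Lii_local}, and on the two outer regions both $u_\mu'$ and $U_*'(\cdot-R_\mu)$ are bounded by $Ce^{-ch}$ thanks to Lemma~\ref{lem:exp_bounds} and the exponential decay of $U_*'$. For $1\leq p<\infty$, the same exponential bounds give the uniform domination
\[
|u_\mu'(r)|+|U_*'(r-R_\mu)|\leq C\,e^{-c\,|r-R_\mu|}\quad\text{on }[0,\infty),
\]
whose $L^p(\R_+)$ norm is bounded uniformly in $\mu$ (the refined bound~\eqref{eq:exp_decay_derivative_origin} ensures there is no trouble near the origin despite $R_\mu\to\infty$). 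Since~\eqref{eq:CV_Lii_local} already provides $L^\infty$ convergence on any fixed window $[R_\mu-h,R_\mu+h]$, we split the $L^p$ norm as before: the contribution of the middle window is at most $(2h)^{1/p}$ times a quantity tending to $0$, and the contribution of the two tails is at most $Ce^{-ch/2}$ (for some $h$-independent constant $C$). Choosing $h$ large first and then letting $\mu\to\mu_*$ yields~\eqref{eq:CV_derivative}.

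The only non-routine point is ensuring that the tails of $U_*(\cdot-R_\mu)$ and of $u_\mu$ decay at comparable exponential rates near the two stable states $0$ and $\beta_*$; this is exactly what the two constants $\sqrt{\mu_*}$ and $\sqrt{|g_*'(\beta_*)|}$ provide in the construction of $U_*$ and in the maximum-principle argument of Lemma~\ref{lem:exp_bounds}. Once these matching exponential rates are in hand, everything else reduces to a standard three-zone splitting and dominated-convergence style argument.
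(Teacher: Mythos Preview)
Your proof is correct and rests on the same two ingredients as the paper: the uniform exponential bounds of Lemma~\ref{lem:exp_bounds} and the local convergence~\eqref{eq:CV_Lii_local}. The difference is only in the packaging. You use an explicit three-zone splitting $[0,R_\mu-h]\cup[R_\mu-h,R_\mu+h]\cup[R_\mu+h,\infty)$ and estimate each piece directly, first for $u_\mu$, then for $u_\mu'$ in $L^\infty$ and $L^p$. The paper instead first obtains the $L^p$ convergence of $v_\mu'-U_*'$ for $p<\infty$ by a one-line dominated convergence argument (the domination $|v_\mu'-U_*'|\leq Ce^{-c|r|}$ coming from Lemma~\ref{lem:exp_bounds}), then deduces the $L^\infty$ convergence of $v_\mu-U_*$ from the $L^1$ convergence of the derivatives together with the matching condition $v_\mu(0)=U_*(0)=\gamma$, and finally gets $L^\infty$ convergence of $v_\mu'-U_*'$ from the analogous $L^1$ convergence of $v_\mu''-U_*''$. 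Your route is slightly more hands-on; the paper's fundamental-theorem-of-calculus trick is a bit slicker and avoids having to separately invoke $\beta_\mu\to\beta_*$. Either way the argument is routine once Lemma~\ref{lem:exp_bounds} is in place. Incidentally, you do not actually need the exponential rates at $0$ and $\beta_*$ to match --- any uniform positive exponential rate suffices --- and the refined bound~\eqref{eq:exp_decay_derivative_origin} near the origin is not strictly needed either, since~\eqref{eq:exp_decay_derivative} already gives $|u_\mu'(r)|\leq Ce^{-cR_\mu}$ for $r\in[0,1]$.
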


\begin{proof}
The exponential bounds from Lemma~\ref{lem:exp_bounds} give $|v'_\mu(r)-U_*'(r)|\leq C e^{-c|r|}$ for $r\geq-R_\mu$. From the dominated convergence theorem, this shows that 
$$\lim_{\mu\to\mu_*}\int_{-R_\mu}^\ii|v'_\mu(r)-U_*'(r)|^p\,dr=\lim_{\mu\to\mu_*}\int_{0}^\ii|u'_\mu(r)-U_*'(r-R_\mu)|^p\,dr=0.$$
The convergence of the derivatives in $L^1$ together with the fact that $v_\mu(0)=U_*(0)$ imply the uniform convergence~\eqref{eq:CV_Lii}. Finally, the uniform convergence for $v_\mu'-U_*'$ follows from the similar $L^1$ convergence of $v_\mu''-U_*''$ on $[-R_\mu,\ii)$. 
\end{proof}

Next we expand the mass in terms of $R_\mu$. 

\begin{lemma}[Expansion of the mass]\label{lem:CV_L2}
We have 
\begin{equation}
\int_0^\ii r^{\alpha}u_\mu(r)^2\,dr=\frac{\beta_*^2}{\alpha+1}R_\mu^{\alpha+1}+O(R_\mu^{\alpha})+O\big(R_\mu^{\alpha+1}(\mu_*-\mu)\big),
\label{eq:claim_L2c}
\end{equation} 
for all $\alpha>0$. At $\alpha=d-1$ this implies
\begin{equation}
M(\mu)=\norm{u_\mu}^2_{L^2(\R^d)}=\frac{|\bS^{d-1}|\beta_*^2}{d}R_\mu^d+O(R_\mu^{d-1})+O\big(R_\mu^d(\mu_*-\mu)\big).
\label{eq:claim_L2}
\end{equation} 
\end{lemma}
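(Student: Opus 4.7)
The plan is to exploit the plateau structure of $u_\mu$ already established: it is close to $\beta_\mu\simeq\beta_*$ on $[0,R_\mu]$ and exponentially small on $[R_\mu,\ii)$, with an exponential transition near $r=R_\mu$. I would split
$$\int_0^\ii r^\alpha u_\mu(r)^2\,dr=\int_0^{R_\mu} r^\alpha u_\mu^2\,dr+\int_{R_\mu}^\ii r^\alpha u_\mu^2\,dr$$
and handle the two pieces separately, using the uniform exponential bounds from Lemma~\ref{lem:exp_bounds}.

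For the outer piece, the estimate $u_\mu(r)\le Ce^{-c(r-R_\mu)}$ on $[R_\mu,\ii)$ will give, after the change of variable $t=r-R_\mu$,
$$\int_{R_\mu}^\ii r^\alpha u_\mu^2\,dr\le C^2\int_0^\ii (R_\mu+t)^\alpha e^{-2ct}\,dt=O(R_\mu^\alpha),$$
where the last bound follows by splitting the integral at $t=R_\mu$: on $[0,R_\mu]$ the polynomial factor is at most $(2R_\mu)^\alpha$, while on $[R_\mu,\ii)$ the integrand is exponentially small. For the inner piece, the complementary bound $\beta_\mu-u_\mu(r)\le Ce^{-c(R_\mu-r)}$ on $[0,R_\mu]$ combined with $u_\mu+\beta_\mu\le 2$ gives $0\le \beta_\mu^2-u_\mu^2\le Ce^{-c(R_\mu-r)}$, and the same type of integral bound (now with $t=R_\mu-r$) yields
$$\int_0^{R_\mu} r^\alpha u_\mu^2\,dr=\frac{\beta_\mu^2}{\alpha+1}R_\mu^{\alpha+1}+O(R_\mu^\alpha).$$

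It then remains to pay for replacing $\beta_\mu^2$ by $\beta_*^2$. A direct computation shows $g_{\mu_*}'(\beta_*)<0$ (this is where $p>q>1$ enters, since $\beta_*$ is not a critical point of $\beta\mapsto\beta^{q-1}-\beta^{p-1}$), so the implicit function theorem applied to $g_\mu(\beta_\mu)=0$ at $\mu=\mu_*$ delivers $\beta_\mu=\beta_*+O(\mu_*-\mu)$, and hence
$$\frac{\beta_\mu^2}{\alpha+1}R_\mu^{\alpha+1}=\frac{\beta_*^2}{\alpha+1}R_\mu^{\alpha+1}+O\big(R_\mu^{\alpha+1}(\mu_*-\mu)\big).$$
Adding the contributions then produces~\eqref{eq:claim_L2c}, and~\eqref{eq:claim_L2} follows at $\alpha=d-1$ from $M(\mu)=|\bS^{d-1}|\int_0^\ii r^{d-1}u_\mu^2\,dr$.

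The only delicate point is already absorbed into Lemma~\ref{lem:exp_bounds}: the decay constants $c,C$ must be uniform in $\mu$ close to $\mu_*$, so they can be pulled outside the integrals without polluting the error. Once this uniformity is in hand, the rest is elementary bookkeeping; the main conceptual obstacle is recognizing that one should compare $u_\mu^2$ to $\beta_*^2$ via the intermediate value $\beta_\mu^2$, paying the transition error through the exponential decay and the reference-value error through the implicit function theorem.
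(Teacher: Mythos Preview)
Your proposal is correct and follows essentially the same approach as the paper: split at $R_\mu$, use the uniform exponential bounds of Lemma~\ref{lem:exp_bounds} on each piece, and then replace $\beta_\mu$ by $\beta_*$ via the implicit function theorem. The only cosmetic difference is that the paper obtains the inner estimate by separate upper and lower bounds (using $u_\mu\le u_\mu(0)$ above and $u_\mu\ge \beta_\mu-Ce^{-c(R_\mu-r)}$ below), whereas you write $u_\mu^2=\beta_\mu^2-(\beta_\mu^2-u_\mu^2)$ and bound the difference directly; both routes use the same ingredients and give the same error terms.
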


We will prove later that $R_\mu\sim C(\mu_*-\mu)^{-1}$ so that the two errors on the right side are actually of the same order. 

\begin{proof}
Recall that $\beta_\mu$ is the second root of $g_\mu=g_*+(\mu_*-\mu)u$. From the implicit function theorem, we obtain 
\begin{equation}
 \beta_\mu=\beta_*+\frac{\beta_*}{g'(\beta_*)}(\mu-\mu_*)+O\big((\mu-\mu_*)^2\big).
 \label{eq:approx_beta_mu}
\end{equation}
Using the upper bound~\eqref{eq:exp_decay} we then find
\begin{align*}
\int_0^\ii r^\alpha u_\mu(r)^2\,dr&\leq u_\mu(0)^2\int_0^{R_\mu} r^\alpha \,dr+C\int_{0}^\ii (r+R_\mu)^\alpha e^{-cr}\,dr \\
&\leq \frac{u_\mu(0)^2}{\alpha+1}R_\mu^{\alpha+1}+O(R_\mu^\alpha)\\
&=\frac{\beta_*^2}{\alpha+1}R_\mu^{\alpha+1}+O\left(R_\mu^{\alpha+1}(\mu_*-\mu)\right)+O(R_\mu^\alpha).
\end{align*}
The second estimate is because $u_\mu(0)\leq \beta_\mu\leq\beta_*+C(\mu_*-\mu)$.
Using the lower bound in~\eqref{eq:exp_decay}, we also obtain
\begin{align*}
\int_0^\ii r^\alpha u_\mu(r)^2\,dr&\geq \int_{0}^{R_\mu} r^\alpha \left(\beta_\mu-Ce^{-c(R_\mu-r)}\right)^2\,dr \\
&=\frac{\beta_*^2}{\alpha+1}R_\mu^{\alpha+1}+O(R_\mu^{\alpha})
\end{align*}
since $\beta_\mu\geq\beta_*$. This gives the stated expansion~\eqref{eq:claim_L2c}, hence~\eqref{eq:claim_L2} after passing to spherical coordinates.
\end{proof}

Finally, we obtain the exact behavior of $R_\mu$ in terms of $\mu_*-\mu$. 

\begin{lemma}[Behavior of $R_\mu$]\label{lem:R_mu}
We have
\begin{equation}
R_\mu\underset{\mu\to\mu_*}\sim \frac{2\sqrt2(d-1)}{\beta_*^2(\mu_*-\mu)}\int_0^{\beta_*} \sqrt{|G_*(s)|}\,ds.
\label{eq:R_mu}
\end{equation}
\end{lemma}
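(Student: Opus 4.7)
The proof rests on the Pohozaev-type identity~\eqref{eq:limit_d_term} which, after splitting $G_\mu(u) = G_*(u) + \frac{\mu_*-\mu}{2}|u|^2$, reads
$$(d-1)\int_0^\ii \frac{u_\mu'(s)^2}{s}\,ds = G_*(u_\mu(0)) + \frac{\mu_*-\mu}{2}\, u_\mu(0)^2.$$
The plan is to extract the leading behavior of each side as $\mu\to\mu_*$: an RHS of order $\mu_*-\mu$ and an LHS of order $1/R_\mu$. Equating the two asymptotics and solving for $R_\mu$ will yield~\eqref{eq:R_mu}.

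For the right-hand side, Lemma~\ref{lem:exp_bounds} evaluated at $r=-R_\mu$ together with~\eqref{eq:approx_beta_mu} gives
$$0\leq\beta_*-u_\mu(0) \leq (\beta_*-\beta_\mu) + (\beta_\mu-u_\mu(0)) = O(\mu_*-\mu) + O(e^{-cR_\mu}).$$
Since $G_*(\beta_*)=g_*(\beta_*)=0$, a second-order Taylor expansion at $\beta_*$ then gives $G_*(u_\mu(0)) = O((\mu_*-\mu)^2) + O(e^{-2cR_\mu})$. The bound~\eqref{eq:lower_bd_R_mu} ensures $R_\mu$ grows at least like $(\mu_*-\mu)^{-1/2}$, so $e^{-2cR_\mu}$ is negligible compared to $\mu_*-\mu$, and hence
$$G_\mu(u_\mu(0)) = \frac{\mu_*-\mu}{2}\,\beta_*^2 \,\bigl(1+o(1)\bigr).$$

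For the left-hand side I claim that
$$R_\mu \int_0^\ii \frac{u'_\mu(s)^2}{s}\,ds \xrightarrow[\mu\to\mu_*]{} \int_{-\ii}^\ii U_*'(r)^2\,dr.$$
To establish this, I split $(0,\ii)$ into a window $[R_\mu-A_\mu, R_\mu+A_\mu]$ with $A_\mu\to\ii$ and $A_\mu/R_\mu\to 0$ (e.g.\ $A_\mu=\sqrt{R_\mu}$), and its complement. On the complement, the exponential bounds~\eqref{eq:exp_decay_derivative} and~\eqref{eq:exp_decay_derivative_origin} of Lemma~\ref{lem:exp_bounds} show that $u_\mu'(s)^2$ is exponentially smaller than any power of $1/R_\mu$. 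On the window, $1/s = R_\mu^{-1}\bigl(1 + O(A_\mu/R_\mu)\bigr)$, and after the change of variable $r=s-R_\mu$ the $L^2$ convergence~\eqref{eq:CV_derivative} gives $\int_{-A_\mu}^{A_\mu} v'_\mu(r)^2\,dr \to \int_{-\ii}^\ii U_*'(r)^2\,dr$. Multiplying the limiting one-dimensional equation~\eqref{eq:U} by $U_*'$ and integrating, together with the decay of $U_*$ at $\pm\ii$, shows that $(U_*')^2 = -2G_*(U_*)$. The substitution $s=U_*(r)$ (with $U_*'<0$) then yields
$$\int_{-\ii}^\ii U_*'(r)^2\,dr = \sqrt 2 \int_0^{\beta_*}\sqrt{|G_*(s)|}\,ds.$$

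Plugging the two asymptotics into the identity gives
$$\frac{(d-1)\sqrt 2}{R_\mu}\int_0^{\beta_*}\!\!\sqrt{|G_*(s)|}\,ds \,\bigl(1+o(1)\bigr) = \frac{\beta_*^2(\mu_*-\mu)}{2}\bigl(1+o(1)\bigr),$$
which is equivalent to~\eqref{eq:R_mu}. The main technical obstacle is the intermediate-window splitting of $\int u'_\mu(s)^2/s\,ds$: one needs $A_\mu$ large enough that the $L^2$ convergence of $v'_\mu$ is already effective on $[-A_\mu,A_\mu]$, yet small compared to $R_\mu$ so that $1/s$ is well approximated by $1/R_\mu$ on the window. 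The exponential bounds of Lemma~\ref{lem:exp_bounds} are precisely what allows this two-scale argument to go through, with errors coming from the complement (exponentially small in $A_\mu$) and from the $O(A_\mu/R_\mu)$ expansion of $1/s$ on the window.
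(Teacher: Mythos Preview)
Your proof is correct and takes a genuinely different route from the paper's. You work directly with the identity~\eqref{eq:limit_d_term}, namely $(d-1)\int_0^\infty u_\mu'(s)^2/s\,ds=G_\mu(u_\mu(0))$, estimating the weighted integral on the left by a two-scale window argument around $s=R_\mu$ (using the exponential bounds of Lemma~\ref{lem:exp_bounds} to kill the complement and the $L^2$ convergence of $v_\mu'$ on the window). The paper instead integrates~\eqref{eq:integrated_ODE} over $(0,\infty)$ to obtain a \emph{different} Pohozaev-type relation,
\[
\Big(d-\tfrac32\Big)\int_0^\infty u_\mu'(r)^2\,dr=\int_0^\infty G_\mu(u_\mu(r))\,dr,
\]
whose left side converges directly by~\eqref{eq:CV_derivative} (no $1/s$ weight, hence no window splitting), while the divergent piece $R_\mu G_\mu(\beta_\mu)$ is extracted from the right side by subtracting $G_\mu(\beta_\mu)$ on $[0,R_\mu]$ and applying dominated convergence to the remainder. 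Your approach trades the $G_\mu$-side decomposition for the window estimate on the $u_\mu'$-side; the paper does the opposite. Both are of comparable length and rely on the same ingredients (Lemma~\ref{lem:exp_bounds} and Lemma~\ref{lem:CV_u_derivative}). One cosmetic point: your inequality $0\le\beta_*-u_\mu(0)$ need not hold since $u_\mu(0)\le\beta_\mu$ with $\beta_\mu>\beta_*$; what you actually need (and use) is $|u_\mu(0)-\beta_*|=O(\mu_*-\mu)+O(e^{-cR_\mu})$, which follows from~\eqref{eq:exp_decay} and~\eqref{eq:approx_beta_mu}.
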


\begin{proof}
We integrate~\eqref{eq:integrated_ODE} and obtain
\begin{equation*}
\frac12\int_0^\ii u_\mu'(r)^2\,dr-(d-1)\int_0^\ii\int_r^\ii\frac{u_\mu'(s)^2}{s}\,ds\,dr+\int_0^\ii G_\mu(u_\mu(r))\,dr=0.
\end{equation*}
After integrating by parts we find
$$-(d-1)\int_0^\ii\int_r^\ii\frac{u_\mu'(s)^2}{s}\,ds\,dr=-(d-1)\int_0^\ii u'_\mu(r)^2\,dr$$
and the Pohozaev-type relation
\begin{equation}
\left(d-\frac32\right)\int_0^\ii u_\mu'(r)^2\,dr=\int_0^\ii G_\mu(u_\mu(r))\,dr.
\label{eq:Pohozaev}
\end{equation}
We split the second integral in the form
\begin{multline*}
\int_0^\ii G_\mu(u_\mu(r))\,dr=\int_{-R_\mu}^\ii G_\mu(v_\mu(r))\,dr\\=\int_{-R_\mu}^0 \big(G_\mu(v_\mu(r))-G_\mu(\beta_\mu)\big)\,dr+\int_{0}^\ii G_\mu(v_\mu(r))\,dr+R_\mu G_\mu(\beta_\mu).
\end{multline*}
When $\mu$ is in a neighborhood of $\mu_*$, we have the bounds
$$|G_\mu(v)|\leq Cv^2\qquad\text{for all $v\in[0,\gamma]$}$$
and
$$|G_\mu(v)-G_\mu(\beta_\mu)|\leq C(v-\beta_\mu)^2\qquad\text{for all $v\in[\gamma,\beta_\mu]$}.$$
With the exponential bounds~\eqref{eq:exp_decay}, this allows us to use Lebesgue's dominated convergence theorem and deduce that 
$$\lim_{\mu\to\mu_*}\int_{0}^\ii G_\mu(v_\mu(r))\,dr=\int_0^{\ii}G_*(U_*(r))\,dr,$$
$$\lim_{\mu\to\mu_*}\int_{-R_\mu}^0 \big(G_\mu(v_\mu(r))-G_\mu(\beta_\mu)\big)\,dr=\int_{-\ii}^0G_*(U_*(r))\,dr.$$
Using the $L^2$ convergence~\eqref{eq:CV_derivative} of $u'_\mu$, we therefore obtain from~\eqref{eq:Pohozaev}
\begin{align}
\lim_{\mu\to\mu_*}R_\mu G_\mu(\beta_\mu)&=\left(d-\frac32\right)\int_{-\ii}^\ii U_*'(r)^2\,dr-\int_{-\ii}^\ii G_*(U_*(r))\,dr\nn\\
&=\sqrt{2}(d-1)\int_{0}^{\beta_*}|G_*(s)|^{\frac12}\,ds.\label{eq:limit_R_G}
\end{align}
Using the expansion of $\beta_\mu$ in~\eqref{eq:approx_beta_mu}, this implies
$$G_\mu(\beta_\mu)=G_*(\beta_\mu)+\frac{\mu_*-\mu}{2}\beta_\mu^2=\frac{\mu_*-\mu}{2}\beta_*^2+O\big((\mu-\mu_*)^2\big)$$
and after inserting in~\eqref{eq:limit_R_G} we obtain~\eqref{eq:R_mu}.
\end{proof}

Inserting~\eqref{eq:R_mu} into~\eqref{eq:claim_L2} we obtain immediately that 
\begin{equation}
M(\mu) =\frac{\Lambda}{(\mu_*-\mu)^d}+O\left((\mu_*-\mu)^{-d+1}\right)_{\mu\to\mu_*}
\label{eq:behavior_M}
\end{equation}
with the constant $\Lambda$ introduced in the statement. This is the first limit in Theorem~\ref{thm:limit_mu_star}. It only remains to prove the convergence for $M'(\mu)$, which requires a detailed analysis of the linearized operator. 

\subsection{The linearized operator}
The difficulty with the derivative $M'(\mu)=-2\langle u_\mu,(\cL_\mu)_{\rm rad}^{-1}u_\mu\rangle$ is that the first eigenvalue of $\cL_\mu$ tends to zero. Indeed, since $u_\mu-U_*(|x|-R_\mu)\to0$ in $L^\ii(\R^d)$, as proved before in Lemma~\ref{lem:CV_u_derivative}, the intuition is that the operator $\cL_\mu$ behaves like $-\Delta-g_*'(0)=-\Delta+\mu_*$ at infinity and like $-\Delta-g_*'(\beta_*)$ close to the origin. These are two positive operators. On the other hand, the restriction to the radial sector behaves like the operator
$$L_*:=-\frac{\rm d^2}{{\rm d}r^2}-g'_*(U_*)\qquad\text{on $L^2(\R)$,}$$
in the neighborhood of $|x|=R_\mu$. This is because in this region the $d$-dependent term $-(d-1)({\rm d}/{\rm d}r)/r$ becomes negligible.
Note that we have
$L_*U_*'=0$
which proves that $L_*\geq0$ with $\ker(L_*)={\rm span}(U_*')$ .
On the other hand, there is a spectral gap above $\lambda_1(L_*)=0$ since the essential spectrum starts at $\min\left(\mu_*,-g'_*(\beta_*)\right)>0$ and the first eigenvalue is always simple, by the Perron-Frobenius theorem. 
From this discussion, we conclude that the first eigenvalue $\lambda_1(\mu)$ of the operator $\cL_\mu$ should tend to 0, that the corresponding eigenfunction $\phi_\mu$ should behave like $U_*'(\cdot -R_\mu)$ and that $\cL_\mu$ should have a uniform spectral gap above its first eigenvalue, when restricted to the radial sector. The following result confirms this intuition. 

\begin{lemma}[The linearized operator in the limit $\mu\to\mu_*$]\label{lem:linearized}
The lowest eigenvalue of $\cL_\mu$ behaves as
\begin{equation}
\lambda_1(\mu)=-\frac{d-1}{(R_\mu)^2}+o\left(\frac1{(R_\mu)^2}\right)_{\mu\to\mu_*}
\label{eq:lambda_1}
\end{equation}
and the corresponding normalized positive eigenfunction $\phi_\mu$ satisfies
\begin{equation}
\lim_{\mu\to\mu_*}\norm{\phi_\mu+\frac{U_*'(\cdot-R_\mu)}{\kappa(R_\mu)^{\frac{d-1}{2}}}}_{L^2(\R^d)}=0
\label{eq:CV_L2_phi_mu}
\end{equation}
with
$$\kappa=2^{\frac14}|\bS^{d-1}|^{\frac12}\left(\int_{0}^{\beta_*}\sqrt{|G_*(v)|}\,dv\right)^{\frac12}.$$
In addition, we have the bound
\begin{equation}
P_\mu^\perp(\cL_\mu)_{\rm rad}P_\mu^\perp\geq cP_\mu^\perp
\label{eq:estim_cL_orthogonal}
\end{equation}
for some $c>0$ where $P_\mu^\perp=1-|\phi_\mu\rangle\langle\phi_\mu|$ is the projection on the orthogonal of $\phi_\mu$, within the sector of radial functions. 
\end{lemma}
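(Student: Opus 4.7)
The plan is built around the identity
\[\cL_\mu u_\mu' = -\frac{d-1}{r^2}\,u_\mu'\qquad\text{on }(0,\infty),\]
obtained by differentiating the radial ODE~\eqref{eq:ODE} with respect to $r$. Since $u_\mu$ is radial-decreasing, $-u_\mu'>0$, so the identity makes it a positive approximate eigenvector of $\cL_\mu$ with ``eigenvalue'' $-(d-1)/r^2$, which is concentrated near $-(d-1)/R_\mu^2$ as $\mu\nearrow\mu_*$. This drives both the asymptotics~\eqref{eq:lambda_1} and the form of the eigenfunction in~\eqref{eq:CV_L2_phi_mu}.

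First I would take $\Phi_\mu:=-u_\mu'/\|u_\mu'\|_{L^2(\R^d)}$ as trial function. Combining Lemma~\ref{lem:exp_bounds} with the $L^p$ convergence~\eqref{eq:CV_derivative}, the change of variables $s=r-R_\mu$ and dominated convergence yield
\[\int_0^\infty u_\mu'(r)^2\,r^{d-k}\,dr = R_\mu^{d-k}\left(\int_\R U_*'(s)^2\,ds\right)(1+o(1)),\qquad k\in\{-1,1,3\},\]
and $\int_\R U_*'^2 = \sqrt{2}\int_0^{\beta_*}\sqrt{|G_*|}\,ds$. Hence $\|u_\mu'\|_{L^2(\R^d)}^2 = \kappa^2 R_\mu^{d-1}(1+o(1))$ and the Rayleigh quotient yields
\[\lambda_1(\mu)\leq \langle\Phi_\mu,\cL_\mu\Phi_\mu\rangle = -(d-1)\frac{\int_0^\infty u_\mu'^2 r^{d-3}\,dr}{\int_0^\infty u_\mu'^2 r^{d-1}\,dr} = -\frac{d-1}{R_\mu^2}+o(R_\mu^{-2}).\]
Perron--Frobenius in the radial sector gives simplicity of $\lambda_1(\mu)$ together with a positive radial eigenvector $\phi_\mu$.

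The main technical step is the uniform gap, which I would prove in the form
\[\inf\bigl\{\langle\psi,\cL_\mu\psi\rangle : \psi\in L^2_{\mathrm{rad}}(\R^d),\ \|\psi\|=1,\ \psi\perp\Phi_\mu\bigr\}\geq c>0\]
for $\mu$ close enough to $\mu_*$, by contradiction and concentration-compactness. If $\mu_n\to\mu_*$ and normalized radial $\psi_n\perp\Phi_{\mu_n}$ satisfy $\langle\psi_n,\cL_{\mu_n}\psi_n\rangle\to a\leq 0$, introduce an IMS partition $\chi_1^2+\chi_2^2=1$ with $\chi_1$ supported in $\{|r-R_{\mu_n}|\leq T\}$ and $|\nabla\chi_i|=O(1/T)$. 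In the outer region, Lemma~\ref{lem:exp_bounds} together with $-g_*'(\beta_*)>0$ and $\mu_*>0$ forces $-g_{\mu_n}'(u_{\mu_n})\geq c_0>0$ on the support of $\chi_2$ for $T$ large and $\mu$ close to $\mu_*$, whence $\chi_2\cL_{\mu_n}\chi_2\geq c_0\chi_2^2$. In the inner region, the rescaling $\tilde f_n(s):=|\bS^{d-1}|^{1/2}R_{\mu_n}^{(d-1)/2}(\chi_1\psi_n)(s+R_{\mu_n})$ asymptotically preserves the $L^2$-norm, and by the uniform convergence $u_{\mu_n}(s+R_{\mu_n})\to U_*(s)$ from Lemma~\ref{lem:CV_u_derivative} the quadratic form transforms as $\langle\chi_1\psi_n,\cL_{\mu_n}\chi_1\psi_n\rangle = \langle\tilde f_n,L_*\tilde f_n\rangle+o(1)\|\tilde f_n\|^2$. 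Passing to a weak limit $\tilde f_\infty$ in $L^2(\R)$ and using weak lower semicontinuity with $L_*\geq 0$,
\[a\geq \langle\tilde f_\infty,L_*\tilde f_\infty\rangle + c_0(1-\|\tilde f_\infty\|^2) - O(T^{-2}).\]
Since $L_*\geq 0$ with $\ker L_* = \mathrm{span}(U_*')$ and essential spectrum above $\min(\mu_*,-g_*'(\beta_*))>0$, the condition $a\leq 0$ forces $\|\tilde f_\infty\|=1$ (no mass escape) and $\tilde f_\infty\propto U_*'$. But the orthogonality $\psi_n\perp\Phi_{\mu_n}$ passes, through the same rescaling, to $\tilde f_\infty\perp U_*'$ in $L^2(\R)$ (using that the rescaled $\Phi_{\mu_n}$ converges to the nonzero multiple $-|\bS^{d-1}|^{1/2}U_*'/\kappa$ of $U_*'$), a contradiction.

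Once the gap is established, the sharp asymptotics and the $L^2$ convergence follow quickly from spectral theory. Rewriting the key identity as $\cL_\mu\Phi_\mu + (d-1)R_\mu^{-2}\Phi_\mu = (d-1)(R_\mu^{-2}-r^{-2})\Phi_\mu$ and using $|R_\mu^{-2}-r^{-2}| = O(|r-R_\mu|/R_\mu^3)$ on the (essentially compact) support of $\Phi_\mu$ together with $\int s^2 U_*'^2\,ds<\infty$, one gets $\|\cL_\mu\Phi_\mu+(d-1)R_\mu^{-2}\Phi_\mu\|_{L^2(\R^d)}=O(R_\mu^{-3})$. Decomposing $\Phi_\mu = a_\mu\phi_\mu + \psi_\perp$ with $\psi_\perp\perp\phi_\mu$ and using the gap $\cL_\mu\geq c>0$ on $\{\phi_\mu\}^\perp$ from Step~2 (via min-max, $\lambda_2(\mu)\geq c$), one extracts $1-a_\mu^2 = O(R_\mu^{-6})$ and $|\lambda_1(\mu)+(d-1)/R_\mu^2| = O(R_\mu^{-3})$, which yields~\eqref{eq:lambda_1}. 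Then $\|\phi_\mu-\Phi_\mu\|_{L^2}^2 = 2(1-a_\mu)\to 0$ combined with Lemma~\ref{lem:CV_u_derivative} gives~\eqref{eq:CV_L2_phi_mu}, and~\eqref{eq:estim_cL_orthogonal} follows from Step~2 and $a_\mu\to 1$. The hardest part is the gap step: the $\mu$-dependent geometry with center $R_\mu\to\infty$, the non-trivial kernel of $L_*$, and the correct passage of the orthogonality through the shift-and-rescale all require a delicate concentration-compactness argument.
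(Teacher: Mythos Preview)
Your approach is correct and arrives at the same conclusion, but the route differs from the paper's in two notable ways.

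For the \emph{lower bound} on $\lambda_1(\mu)$, the paper does not use an approximate-eigenvector estimate. Instead it observes directly that since $-u_\mu'>0$ lies in the kernel of $\cL_\mu+(d-1)r^{-2}$, Perron--Frobenius gives the operator inequality $\cL_\mu\geq -(d-1)r^{-2}$. Plugging in the true eigenfunction $\psi_\mu$ (normalized by $\psi_\mu(R_\mu)=-U_*'(0)$) yields $\lambda_1(\mu)\geq -(d-1)\int r^{d-3}\psi_\mu^2/\int r^{d-1}\psi_\mu^2$, and the required asymptotics follow once one proves exponential bounds $0<\psi_\mu(r)\leq Ce^{-c|r-R_\mu|}$ by the same maximum-principle argument as in Lemma~\ref{lem:exp_bounds}. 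For the \emph{gap}, the paper's argument is much shorter than your IMS/concentration-compactness step: assuming $\lambda_2(\mu_n)\to 0$, the same maximum-principle bounds apply to the second eigenfunction, so it too concentrates and converges to a multiple of $U_*'$, contradicting orthogonality to $\psi_{\mu_n}$.

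What each route buys: the paper's argument is shorter and avoids the IMS machinery, at the cost of re-running the exponential-bound analysis for $\psi_\mu$. Your approach is more quantitative---the $O(R_\mu^{-3})$ error in $\cL_\mu\Phi_\mu+(d-1)R_\mu^{-2}\Phi_\mu$ gives the sharper $|\lambda_1+(d-1)R_\mu^{-2}|=O(R_\mu^{-3})$ rather than merely $o(R_\mu^{-2})$---and it derives the eigenfunction convergence as a corollary rather than proving it first. One technical caveat: in dimension $d=2$ your claimed bound $\|(R_\mu^{-2}-r^{-2})\Phi_\mu\|_{L^2(\R^2)}=O(R_\mu^{-3})$ is not literally true, since $r^{-2}u_\mu'(r)\sim C r^{-1}$ near the origin is not in $L^2(\R^2)$; you would need to insert a cutoff away from $r=0$ (harmless, the contribution being $O(e^{-cR_\mu})$). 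The paper runs into the analogous $d=2$ issue in its lower-bound step and handles it with an IMS localization supported away from the origin.
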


\begin{proof} We split the proof into several steps.

\medskip

\noindent\textbf{Step 1. Upper bound on $\lambda_1(\mu)$.}
We recall that $u_\mu'$ satisfies the equation
$$\cL_\mu u_\mu'+\frac{d-1}{|x|^2}u_\mu'=0.$$
By the variational principle, this proves immediately that 
$$\lambda_1(\mu)\leq \norm{u_\mu'}_{L^2(\R^d)}^{-2}\pscal{u_\mu',\cL_\mu u_\mu'}=-(d-1)\frac{\int_0^\ii r^{d-3}u_\mu'(r)^2\,dr}{\int_0^\ii r^{d-1}u_\mu'(r)^2\,dr}.$$
We obtain from Lemma~\ref{lem:CV_u_derivative} and the same analysis as in Lemma~\ref{lem:CV_L2} that 
$$\int_0^\ii r^{\alpha}u_\mu'(r)^2\,dr=(R_\mu)^{\alpha}\int_\R U_*'(r)^2\,dr+o(R_\mu^\alpha),\qquad \forall \alpha\geq0$$
and this gives in dimension $d\geq3$ the upper bound
\begin{equation}
 \lambda_1(\mu)\leq -\frac{d-1}{(R_\mu)^2}+o\left((\mu_*-\mu)^2\right).
 \label{eq:upper_bd_lambda_1}
\end{equation}
Dimension $d=2$ requires a little more attention. In this case we can for instance use~\eqref{eq:limit_d_term} which gives 
\begin{align*}
(d-1)\int_0^\ii \frac{u_\mu'(r)^2}{r}\,dr &=G_\mu(u_\mu(0))\\
&=G_\mu(\beta_\mu)+O(e^{-R_\mu})\\
&=\frac{d-1}{R_\mu}\int_{\R}U_*'(r)^2\,dr+o(R_\mu^{-1}).
\end{align*}
In the second line we have used~\eqref{eq:exp_decay} whereas in the third line we have used~\eqref{eq:limit_R_G}. We therefore obtain the same upper bound~\eqref{eq:upper_bd_lambda_1} in dimension $d=2$. 

\medskip

\noindent\textbf{Step 2. Convergence.}
Let $\psi_\mu$ be the first (radial positive) eigenfunction of $\cL_\mu$, normalized in the manner
$$\psi_\mu(R_\mu)=-U_*'(0)=\sqrt{-2G_*(\gamma)}.$$
The function $\psi_\mu$ solves the linear equation
$$\left(-\Delta-g_\mu'(u_\mu)-\lambda_1(\mu)\right)\psi_\mu=0.$$
Usual elliptic regularity gives that $\psi_\mu$ is bounded in $C^2(B_{R_\mu+h}\setminus B_{R_\mu-h})$ for every fixed $h>0$.
Since $\lambda_1(\mu)<0$ we also obtain 
$$\left(-\Delta-g_\mu'(u_\mu)\right)\psi_\mu\leq 0\qquad \text{on $\R^d$}.$$
By arguing exactly as in the proof of Lemma~\ref{lem:exp_bounds}, we can then obtain a uniform bound in the form
\begin{equation}
0< \psi_\mu(r)\leq Ce^{-c|R_\mu-r|} 
 \label{eq:exp_decay_psi_mu}
\end{equation}
for some constants $C,c>0$. Using the equation 
$$\left(r^{d-1}\psi_\mu'\right)'=-r^{d-1}\left(\lambda_1(\mu)+g_\mu(u_\mu)\right)\psi_\mu$$
and the fact that $|\lambda_1(\mu)|\leq \|g_\mu'\|_{L^\ii(]0,1[)}$ is bounded, we can deduce that 
$$|\psi'_\mu(r)|\leq \begin{cases}
Ce^{-c|R_\mu-r|}& \text{on $[0,\ii)$,}\\
Cre^{-cR_\mu} & \text{on $[0,1]$}.
                     \end{cases}$$
After extracting a subsequence, we can thus assume that $\psi_{\mu_n}(\cdot+R_{\mu_n})\to V$ strongly in $L^1\cap L^\ii(\R)$ and $\lambda_1(\mu_n)\to\lambda$, with $-V''-g'_*(U_*)V=\lambda V$. Since $V\geq 0$ and $V(0)=-U_*'(0)>0$ we must then have $V=-U_*'$ and $\lambda=0$. We have therefore proved that $\lambda_1(\mu)\to0$ and $\psi_\mu(\cdot+R_\mu)\longrightarrow -U_*'$ in $L^1\cap L^\ii(\R)$.

\medskip

\noindent\textbf{Step 3. Lower bound on $\lambda_1(\mu)$.} 
Next we derive the lower bound on $\lambda_1(\mu)$. Since $u_\mu'$ has a constant sign, this shows that it must be the first eigenvector of the operator $\cL_\mu +(d-1)|x|^{-2}$.  In other words, we have 
\begin{equation}
 \cL_\mu +\frac{d-1}{|x|^2}\geq0
 \label{eq:compare_cL_mu}
\end{equation}
in the sense of quadratic form. Hence we can use that 
$$\lambda_1(\mu)=\frac{\pscal{\psi_\mu,\cL_\mu\psi_\mu}}{\norm{\psi_\mu}^2_{L^2(\R^d)}}\geq-(d-1)\frac{\int_0^\ii r^{d-3}\psi_\mu(r)^2\,dr}{\int_0^\ii r^{d-1}\psi_\mu(r)^2\,dr}.$$
The pointwise bounds~\eqref{eq:exp_decay_psi_mu} allow us to pass to the limit exactly as for $u_\mu'$ and conclude that 
$$\int_0^\ii r^{\alpha}\psi_\mu(r)^2\,dr=(R_\mu)^{\alpha}\int_\R U_*'(r)^2\,dr+o(R_\mu^\alpha),\qquad \forall \alpha\geq0.$$
This gives the desired lower bound in dimensions $d\geq3$. 

The proof does not quite work in dimension $d=2$, since in this case $\int_{0}^\ii r^{-1}\psi_\mu(r)^2\,dx=+\ii$. Instead, we choose a radial localization function $\chi_\mu\in C^\ii_c(\R^2)$ so that $\chi_\mu\equiv1$ on the ball of radius $R_\mu-2\sqrt{R_\mu}$, $\chi_\mu\equiv0$ outside of the ball of radius $R_\mu-\sqrt{R_\mu}$ with $|\nabla\chi_\mu|\leq C/\sqrt{R_\mu}$  and we set $\eta_\mu:=\sqrt{1-\chi_\mu^2}$. The IMS localization formula tells us that 
\begin{align*}
\lambda_1(\mu)\norm{\psi_\mu}_{L^2(\R^2)}^2&= \int_{\R^2}|\nabla \psi_\mu|^2-\int_{\R^2}g_\mu'(u_\mu)\psi_\mu^2\\
&= \pscal{\chi_\mu\psi_\mu,\cL_\mu\chi_\mu\psi_\mu}+\pscal{\eta_\mu\psi_\mu,\cL_\mu\eta_\mu\psi_\mu}\\
&\qquad\qquad-\int_{\R^2}(|\nabla \chi_\mu|^2+|\nabla \eta_\mu|^2)\psi_\mu^2\\
&\geq-(d-1)|\bS^1|\int_{0}^\ii\frac{\eta_\mu^2\psi_\mu^2}{r}\,dr-Ce^{-c\sqrt{R_\mu}}\\
&\geq-\frac{(d-1)|\bS^1|}{R_\mu-2\sqrt{R_\mu}}\int_{0}^\ii\psi_\mu^2\,dr-Ce^{-c\sqrt{R_\mu}}.
\end{align*}
The convergence in $L^2$ allows us to conclude. In the first inequality we have used that $g_\mu'(u_\mu(r))<0$ for $r\leq R_\mu-\sqrt{R_\mu}$ since $u_\mu$ is (exponentially) close to $\beta_\mu$ in this range and $g_\mu'(\beta_\mu)<0$. This implies $\pscal{\chi_\mu\psi_\mu,\cL_\mu\chi_\mu\psi_\mu}\geq0$. We have also used~\eqref{eq:compare_cL_mu} for the second term and the exponential bound~\eqref{eq:exp_decay_psi_mu} for the localization error. 

The normalized eigenfunction in the statement is $\phi_\mu=\norm{\psi_\mu}_{L^2(\R^d)}^{-1}\psi_\mu$ with
\begin{align*}
\norm{\psi_\mu}^2_{L^2(\R^d)}&=R_\mu^{d-1}|\bS^{d-1}|\int_{\R}U_*'(r)^2\,dr+o(R_\mu^{d-1})\\
&=R_\mu^{d-1}|\bS^{d-1}|\sqrt2\int_{0}^{\beta_*}\sqrt{|G_*(v)|}\,dv+o(R_\mu^{d-1}) 
\end{align*}
and the result~\eqref{eq:CV_L2_phi_mu} follows.

\medskip

\noindent\textbf{Step 4. Lower bound on the orthogonal to $\phi_\mu$.} 
Let us now prove~\eqref{eq:estim_cL_orthogonal}. We can argue by contradiction and assume that there exists a subsequence $\mu_n\to\mu_*$ such that $\lambda_2(\mu_n)\to0$, where $\lambda_2(\mu_n)$ is then the second eigenvalue of $\cL_\mu$ within the sector of radial functions. But the exact same arguments as before then give that the corresponding eigenfunction $\tilde\psi_\mu$ satisfies $\tilde\psi_\mu(\cdot+R_\mu)\to cU_*'$, which cannot hold because the functions have to be orthogonal to each other. Hence we must have $\liminf_{\mu\to\mu_*}\lambda_2(\mu)>0$. This concludes the proof of Lemma~\ref{lem:linearized}.
\end{proof}

We finally use Lemma~\ref{lem:linearized} to derive the stated limit~\eqref{eq:limit_M_mu_infty} for $M'(\mu)$. We write 
\begin{align*}
M'(\mu)&=-2\pscal{u_\mu,(\cL_\mu)^{-1}u_\mu}\\
&=-2\left(\frac{|\pscal{u_\mu,\phi_\mu}|^2}{\lambda_1(\mu)}+\pscal{P_\mu^\perp u_\mu,(\cL_\mu)^{-1}P_\mu^\perp u_\mu}\right)\\
&=-2\frac{|\pscal{u_\mu,\psi_\mu}|^2}{\lambda_1(\mu)\norm{\psi_\mu}_{L^2(\R^d)}^2}+O(R_\mu^{d}),
\end{align*}
where we have used $P_\mu^{\perp}(\cL_\mu)^{-1}P_\mu^{\perp}\leq P_\mu^\perp /c$ to infer
$$\pscal{P_\mu^\perp u_\mu,(\cL_\mu)^{-1}P_\mu^\perp u_\mu}\leq \frac{2}{c}M(\mu)=O(R_\mu^{d})$$
by Lemma~\ref{lem:CV_L2}. From the previous convergence properties, we have
$$\pscal{u_\mu,\psi_\mu}=-|\bS^{d-1}|R_\mu^{d-1}\int_{\R} U_*U_*'+o(R_\mu^{d-1})=\frac{|\bS^{d-1}|\beta_*^2}{2}R_\mu^{d-1}(1+o(1))$$
and hence we obtain the limit for $M'(\mu)$ in the statement from~\eqref{eq:lambda_1}. This concludes the proof of Theorem~\ref{thm:limit_mu_star}.\qed

\section{Proof of Theorem~\ref{thm:prop_I_lambda} on the variational principle $I(\lambda)$}\label{sec:prop_I_lambda}
It is classical that $I(\lambda)\leq0$ and that $I(\lambda)$ is non-increasing. First we prove that $I$ is concave. Letting $v(x)=u(\lambda^{-1/d}x)$ which satisfies $\int_{\R^d}|v|^2=1$ whenever  $\int_{\R^d}|u|^2=\lambda$ we can rewrite
$$I(\lambda)=\lambda\, J\!\left(\lambda^{-\frac2d}\right)$$
where
\begin{multline}
J(\eps):=\inf_{\substack{v\in H^1(\R^d)\cap L^{p+1}(\R^d)\\ \int_{\R^d}|v|^2=1}}\bigg\{\frac{\eps}{2}\int_{\R^d}|\nabla v(x)|^2\,dx+\frac1{p+1}\int_{\R^d}|v(x)|^{p+1}\,dx\\
-\frac1{q+1}\int_{\R^d}|v(x)|^{q+1}\,dx\bigg\} 
\label{eq:def_J}
\end{multline}
The function $J$ is non-decreasing, concave and non-positive and this implies that $I$ itself is concave. This is because we have
\begin{equation}
I''(\lambda)=-\frac{2(d-2)}{d^2}\lambda^{-\frac2d-1} J'\!\left(\lambda^{-\frac2d}\right)+\frac4{d^2}\lambda^{-\frac4d-1} J''\!\left(\lambda^{-\frac2d}\right)\leq0 
\label{eq:I_concave}
\end{equation}
in the sense of distributions on $(0,\ii)$. From the concavity of $I$ we deduce that there exists a unique $\lambda_c$ such that $I\equiv0$ on $[0,\lambda_c]$ and $I$ is strictly decreasing (and hence negative) on $(\lambda_c,\ii)$. In dimension $d\geq3$ we even see from~\eqref{eq:I_concave} that $I$ is strictly concave on $(\lambda_c,\ii)$. 

The proof that there exists a minimizer for all $\lambda>\lambda_c$ is very classical. By rearrangement inequalities~\cite{LieLos-01}, we can restrict the infimum to radial-decreasing functions. If $\{u_n\}$ is a minimizing sequence consisting of such functions for $I(\lambda)$, then we can assume after passing to a subsequence that $u_n\to u$ weakly in $H^{1}(\R^d)\cap L^{p+1}(\R^d)$ and strongly in $L^r(\R^d)$ for all $2<r<\min(p+1,2^*)$ with $2^*=2d/(d-2)$ when $d\geq3$ and $2^*=\ii$ when $d=2$, by Strauss' compactness lemma for radial functions~\cite{Strauss-77,BerLio-83}. In particular, $u_n\to u$ strongly in $L^{q+1}(\R^d)$. By Fatou's lemma we then have 
$$I(\lambda')\leq \cE(u)\leq \liminf_{n\to\ii}\cE(u_n)=I(\lambda),$$
where $\lambda'=\int_{\R^d}u^2$. Since $I(\lambda)<I(\lambda')$ for $\lambda'<\lambda$ when $\lambda>\lambda_c$, we must then have $\lambda'=\lambda$ and the convergence is strong in $L^2(\R^d)$. In particular, $u$ is a minimizer. 

Any minimizer for $I(\lambda)$, when it exists, can be chosen positive radial-decreasing after rearrangement. It solves~\eqref{powernl} for some $\mu$. From the Pohozaev identity~\eqref{pohozaev} we obtain
$$\frac{d-2}{2d}\int_{\R^d}|\nabla u(x)|^2\,dx=\int_{\R^d}G_\mu(u(x))\,dx=-I(\lambda)-\frac{\mu\lambda}{2}+\frac12\int_{\R^d}|\nabla u(x)|^2\,dx$$
and hence
\begin{equation}
 \mu=\frac2\lambda\left(-I(\lambda)+\frac1d \int_{\R^d}|\nabla u(x)|^2\,dx\right)
 \label{eq:relation_mu_I_kinetic}
\end{equation}
which is strictly positive since $I(\lambda)\leq0$ (except of course in the trivial case $\lambda=0$ where $u\equiv0$ is the only solution). Since $u\geq0$ is radial-decreasing, it must therefore coincide with the unique corresponding $u_\mu$.

Next we look at $\lambda=\lambda_c$. For $q<1+4/d$ a simple scaling argument shows that $I(\lambda)<0$ for all $\lambda>0$, hence $\lambda_c=0$ in this case. The unique minimizer is then $u_0=0$.

For $q\geq 1+4/d$, it is useful to characterize $\lambda_c$ through the inequality~\eqref{eq:Gagliardo-Nirenberg-type}. We have by definition 
\begin{equation}
\frac1{q+1}\int_{\R^d}|u(x)|^{q+1}\,dx\leq \frac12\int_{\R^d}|\nabla u(x)|^2\,dx+\frac1{p+1}\int_{\R^d}|u(x)|^{p+1}\,dx
\end{equation}
for all $u\in H^1(\R^d)\cap L^{p+1}(\R^d)$ such that $\int_{\R^d}|u|^2\leq\lambda_c$. Replacing $u$ by $\ell^{d/2}u(\ell\cdot)$ and optimizing over $\ell$, we obtain
\begin{multline}
\int_{\R^d}|u(x)|^{q+1}\,dx\leq (q+1)\frac{dp-d-4}{2}\times\\
\times\left(\frac{1}{d(p-q)}\int_{\R^d}|\nabla u(x)|^2\,dx\right)^{1-\theta}\left(\frac{2}{(p+1)(dq-d-4)}\int_{\R^d}|u(x)|^{p+1}\,dx\right)^{\theta}
\end{multline}
for all $u\in H^1(\R^d)\cap L^{p+1}(\R^d)$ such that $\int_{\R^d}|u|^2\leq\lambda_c$, with
$$\theta:=\frac{q-1-\frac{4}d}{p-1-\frac{4}d}\in(0,1).$$
When $\theta=0$ the formulas have to be extended by continuity in an obvious manner but the corresponding optimal $\ell$ vanishes. 
This gives
\begin{multline}
\norm{u}_{L^{q+1}(\R^d)}^{q+1}\leq  (q+1)\frac{dp-d-4}{2}\left(\frac{1}{d(p-q)}\right)^{1-\theta}\left(\frac{2}{(p+1)(dq-d-4)}\right)^{\theta}\times\\
\times\left(\lambda_c^{-\frac12}\norm{u}_{L^2(\R^d)}\right)^{q-1-\theta(p-1)}\norm{\nabla u}_{L^2(\R^d)}^{2(1-\theta)}\norm{u}_{L^{p+1}(\R^d)}^{\theta(p+1)}
\end{multline}
for all $u\in H^1(\R^d)\cap L^{p+1}(\R^d)$. In addition, we have equality everywhere for $u$ a minimizer of $I(\lambda_c)$, when it exists, rescaled in the appropriate manner as above. This shows that the best constant in the Gagliardo-Nirenberg-type inequality
\begin{equation}
\norm{u}_{L^{q+1}(\R^d)}^{q+1}\leq C_{p,q,d}\norm{u}_{L^2(\R^d)}^{q-1-\theta(p-1)}\norm{\nabla u}_{L^2(\R^d)}^{2(1-\theta)}\norm{u}_{L^{p+1}(\R^d)}^{\theta(p+1)}
\label{eq:Gagliardo-Nirenberg-type_bis}
\end{equation}
is exactly given by
$$C_{p,q,d}=(q+1)\frac{dp-d-4}{2}\left(\frac{1}{d(p-q)}\right)^{1-\theta}\left(\frac{2}{(p+1)(dq-d-4)}\right)^{\theta}\lambda_c^{\frac{1+\theta(p-1)-q}2}.$$
From the usual Gagliardo-Nirenberg inequality, it is easily seen that $C_{p,q,d}<\ii$ for $q\geq1+4/d$ and therefore $\lambda_c>0$. In the simpler case $q=1+4/d$ we find 
$$C_{p,1+4/d,d}=\frac{d+2}{d}\lambda_c^{-\frac{2}d}$$
and~\eqref{eq:Gagliardo-Nirenberg-type_bis} is indeed the usual Gagliardo-Nirenberg inequality. The corresponding optimizer is the function $Q$ which solves the NLS equation $-\Delta Q-Q^q+Q=0$ and then
\begin{equation}
\lambda_c=\int_{\R^d}Q(x)^2\,dx, \qquad\text{for $q=1+\frac4d$}.
\label{eq:value_lambda_c}
\end{equation}
From Theorem~\ref{thm:limit_mu_0}, this exactly coincides with $M(0)$. The function $Q$ can however not be an optimizer for $I(\lambda_c)$ because the inequality does not involve the $L^{p+1}$ norm, hence $Q$ does not solve the appropriate equation. This is due to the fact that we have to scale it with $\ell\to0$ as above. As a conclusion we have proved that $\lambda_c>0$ for all $q\geq1+4/d$ and that there cannot be an optimizer for $I(\lambda_c)$ at $q=1+4/d$.

It remains to show the existence for $\lambda=\lambda_c$ and $q>1+4/d$. Let us consider a sequence $\lambda_n\searrow \lambda_c$ and call $u_n=u_{\mu_n}$ a sequence of corresponding radial-decreasing minimizers. The sequence of multipliers $\mu_n$ cannot tend to 0 because $\cE(u_{\mu_n})=I(\lambda_n)<0$ and we know that $\cE(u_\mu)$ is always positive for $\mu$ close to the origin by Corollary~\ref{cor:Energy}. On the other hand it can also not converge to $\mu_*$ because there $\cE(u_\mu)$ is unbounded. Hence after extracting a subsequence we have $\mu_n\to\mu\in(0,\mu_*)$ and $u_{\mu_n}\to u_\mu$ strongly in $(H^1\cap L^\ii)(\R^d)$. The function $u_\mu$ is the sought-after minimizer. 

If $0\leq\lambda<\lambda_c$, then there cannot be a minimizer $u$. If there was one $u$ (positive and radial-decreasing without loss of generality), then it would solve~\eqref{powernl} for some $\mu$. Using again~\eqref{eq:relation_mu_I_kinetic} and $I(\lambda)=0$ we find $\mu>0$. Since 
$$\cE\big((1+\eps)u_\mu\big)=-\mu\lambda\eps+o(\eps)$$
we find that $I(\lambda)$ becomes negative on the right of $\lambda$, which can only happen at $\lambda_c$ by definition. This concludes the proof of Theorem~\ref{thm:prop_I_lambda}.\qed

\begin{remark}[Compactness of minimizing sequences]
The strict monotonicity of $J$ in~\eqref{eq:def_J} implies that $I(t\lambda)>tI(\lambda)$ for every $\lambda>\lambda_c$ and every $t\in(0,1)$. This, in turn, implies that $I(\lambda)<I(t\lambda)+I((1-t)\lambda)$. By the concentration-compactness method~\cite{Lions-84,Lions-84b}, these `binding inequalities' imply that all the minimizing sequences for $I(\lambda)$ converge strongly in $H^1(\R^d)$ to a minimizer, up to space translations and a subsequence. 
\end{remark}

\section{Proof of Theorem~\ref{thmuniqnondeg}}\label{sec:proof_thm_uniqueness}

In this section we provide the full proof of Theorem~\ref{thmuniqnondeg}, although we sometimes refer to the literature for some classical parts or to~\cite{LewRot-15} for arguments which coincide with the ones in that paper. Since we are interested in proving the uniqueness and the non-degeneracy of positive radial solution to~\eqref{nleq}, we consider the associated ordinary differential equation
\begin{equation}\label{nlradeq}
\left\{
\begin{aligned}
&u''+\frac{d-1}{r}u'+g(u)=0 \text{ on }\R^*_+\\
&u'(0)=0
\end{aligned}
\right.
\end{equation}
and we focus on showing the uniqueness and non-degeneracy of positive solutions such that $(u(r),u'(r))\to 0$ when $r\to\infty$. This system has a local energy, given by 
\begin{equation}\label{locenergy}
H(r)=\frac{u'(r)^2}{2}+G(u(r)),\qquad  \text{ with } G(\eta)=\int_0^\eta g(s)\,ds,
\end{equation}
which decreases along the trajectories, since 
$$H'(r)=-\frac{d-1}{r}u'(r)^2\leq0.$$ 

We parametrize the solutions $u_y$ to~\eqref{nlradeq} by $u_y(0)=y$. Since we are interested in positive solution with $\|u_y\|_{\infty}<\beta$, then $y<\beta$. Hence, following \cite{McLeod-93,LewRot-15}, we introduce the three sets
\begin{align*}
&S_+=\{y\in (0,\beta): \min_\R u_y >0\},\\
&S_0=\{y\in (0,\beta): u_y>0 \text{ and } \lim_{r\to +\infty}u_y(r)=0\},\\
&S_-=\{y\in (0,\beta): u_y(r_y)=0 \text{ for some (first) } r_y>0\},
\end{align*}
which form a partition of $(0,\beta)$. In case $y\in S_0$, we set $r_y=+\infty$. One should think of plotting the solution in the plane $(u',u)$ as in Figure~\ref{fig:portrait}. Then, as we will show, $S_+$ exactly correspond to all the solutions that cross the vertical axis, while staying above the horizontal axis at all times. On the other hand, $S_-$ consists of those crossing the horizontal axis first (we will show they cannot cross the vertical axis before). We are particularly interested in the set $S_0$ containing the remaining solutions which are converging to the point $(0,0)$ at infinity while staying in the quadrant $(u'<0,u>0)$. Our goal is indeed to show that $S_0$ is reduced to one point. A transition between $S_-$ and $S_+$ is typically a point in $S_0$ and this is actually how one can prove the existence of solutions by the shooting method. Here we \emph{assume} the existence of one such solution, hence we have $S_0\neq\emptyset.$
Points in $S_0$ typically occur as transition points between $S_-$ and $S_+$. The main idea of the proof is to show that for any $y\in S_0$, we must have 
\begin{equation}
(y-\eta,y)\in S_+\qquad\text{and}\qquad (y,y+\eta)\in S_-
\label{eq:transition}
\end{equation}
for some sufficiently small $\eta>0$. In other words, there can only exist transitions from $S_-$ to $S_+$ and never the other way around, when $y$ is increased starting from $y=0$. This will imply uniqueness. The way to show~\eqref{eq:transition} is to prove that the variation with respect to the initial condition $y$,
\begin{equation}
v_y:=\frac{\partial}{\partial y}u_y,
\label{eq:def_v}
\end{equation}
tends to $-\ii$ at infinity, as well as its derivative $v_y'$. This implies that the curves move enough to cross either the horizontal or the vertical axis when $y$ is moved a bit, for a sufficiently large $r$. The function $v_y$ in~\eqref{eq:def_v} turns out to be the zero-energy solution of the linearized operator $\Delta+g'(u)$ with $v(0)=1$. The fact that $v_y$ diverges implies $v_y\notin L^2(\R_+,r^{d-1}\,dr)$, which means that the kernel of $\Delta+g'(u)$ cannot contain any non-trivial radial function. It is then classical~\cite{Weinstein-85} that this implies the non-degeneracy~\eqref{eq:non_degenerate}. 

\begin{figure}[t]
\centering
\includegraphics[width=7cm]{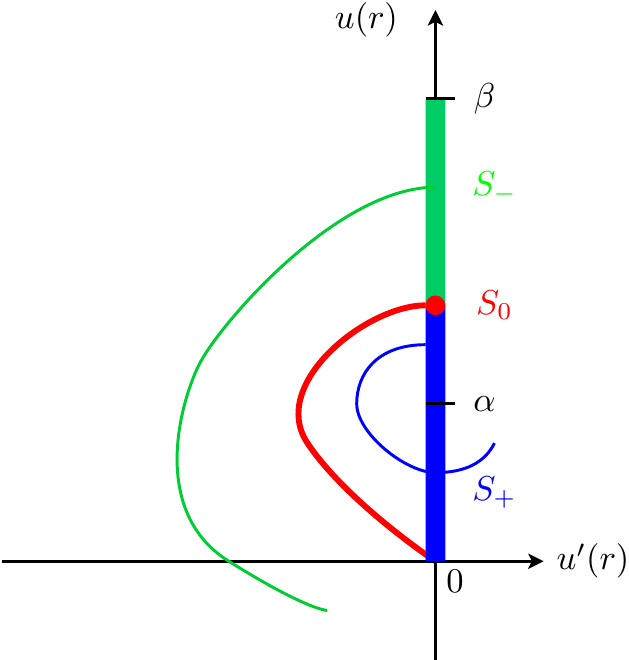}
\caption{Phase portrait of the solutions in the plane $(u',u)$. The interval $(0,\beta)$ is partitioned into the sets $S_+$, $S_-$ and $S_0$.  Solutions with an initial datum in $S_+$ cross first the vertical axis and stay positive for all times, whereas solutions in $S_-$ cross first the horizontal axis. The set $S_0$ contains the solutions that stay in the quadrant for all times and converge to the origin. The goal is to prove that $S_0$ is reduced to one point as in the picture.
\label{fig:portrait}}
\end{figure}

We turn to the proof  of the theorem, which we split into four steps corresponding to Lemmas~\ref{proppartition}--\ref{lem:nondegenerate}, respectively. In the first we show some rather classical facts about the sets $S_-$, $S_+$ and $S_0$.

\begin{lemma}[Properties of the sets $S_+,S_-,S_0$]\label{proppartition}\ 
\begin{enumerate}
\item We have $(0,\alpha]\subset S_+$. 
\item The set $S_-$ is open.
\item\label{proppartition2} If $y\in S_0\cup S_-$, then $u'_y<0$ on $(0,r_y)$. In particular, $u_y$ is strictly decreasing on $(0,r_y)$.
\item\label{proppartition3} If $y\in S_0$, then $\lim_{r\to+\infty} u'_y(r)=0$ and $\int_0^y g(s)\,ds>0$. Moreover, $u_y$ has the following behavior at infinity 
$$
u_y(r)\underset{r\to+\infty}{\sim} C\,\frac{e^{-\sqrt{-g'(0)}r}}{r^{\frac{d-1}{2}}}\quad u'_y(r)\underset{r\to+\infty}{\sim} -\sqrt{-g'(0)}\,C\,\frac{e^{-\sqrt{-g'(0)}r}}{r^{\frac{d-1}{2}}}
$$
for some $C>0$.
\item  If $y\in S_+$, then $u'_y$ vanishes at least once and, for the first positive root $r'_y$ of $u'_y$, we have $H(r'_y)<0$. 
\item The set $S_+$ is open.
\end{enumerate}
\end{lemma}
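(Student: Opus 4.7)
The plan is to exploit the fact that the local energy $H$ in~\eqref{locenergy} is non-increasing along trajectories ($H'=-(d-1)(u')^2/r\leq 0$), combined with continuous dependence on the initial datum $y$, in order to control where the trajectory $u_y$ can go. The key observation is that $G$ is strictly negative on $(0,\eta_*)$ for some $\eta_*\in(\alpha,\beta]$ (because $G(\alpha)=\int_0^\alpha g<0$), so the sign of $H$ at suitable reference points dictates the global behavior of $u_y$.

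For (1), since $H(0)=G(y)<0$ for $y\in(0,\alpha]$, a first zero $r_1$ of $u_y$ would give $H(r_1)=u_y'(r_1)^2/2>0$ (with $u_y'(r_1)<0$ by ODE uniqueness), contradicting monotonicity of $H$; decay to $0$ at infinity would similarly force $H\to 0>H(0)$. For (2), openness of $S_-$ follows from the transversal crossing $u_y'(r_y)<0$ and continuous dependence on the initial datum. For (3), at a hypothetical first critical point $r_0\in(0,r_y)$, the ODE gives $u_y''(r_0)=-g(u_y(r_0))$, and the local-min condition $u_y''(r_0)\geq 0$ (since $u_y$ was previously decreasing) forces $u_y(r_0)\in(0,\alpha]$ and $H(r_0)=G(u_y(r_0))<0$; this contradicts either $H(r_y)=u_y'(r_y)^2/2>0$ (case $y\in S_-$) or $H\to 0$ at infinity (case $y\in S_0$, established in (4)).

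For (4), if $u_y\to 0$ then $G(u_y)\to 0$, so $H$ has a finite nonnegative limit $H_\infty$; the relation $u_y'^2/2=H-G(u_y)$ shows $u_y'\to\pm\sqrt{2H_\infty}$, and for a positive decaying $u_y$ this forces $u_y'\to 0$ and $H_\infty=0$. Then $G(y)=H(0)\geq 0$, with strict inequality because $G(y)=0$ would give $H\equiv 0$, hence $u_y'\equiv 0$ via $H'\equiv 0$, and thus $u_y\equiv y>0$, contradicting $u_y\to 0$. The exponential profile at infinity follows from linearizing~\eqref{nlradeq} near $u=0$: since $g'(0)<0$, the equation reduces asymptotically to the modified Bessel equation $u''+(d-1)u'/r-|g'(0)|u\simeq 0$, and the decaying-at-infinity solution has the stated asymptotics for both $u_y$ and $u_y'$.

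For (5), I would argue that for $y\in S_+\setminus\{\alpha\}$ the function $u_y$ cannot be monotone: if it were, being bounded it would converge to a zero of $g$ in $(0,\beta)$, necessarily $\alpha$; but linearization around $\alpha$ (with $g'(\alpha)>0$ and vanishing damping $(d-1)/r$) produces an eventually underdamped oscillator, so infinitely many critical points appear. At the first critical point $r_y'>0$, the sign of $u_y''(r_y')=-g(u_y(r_y'))$ forces $u_y(r_y')$ to lie on the opposite side of $\alpha$ from $y$; in both subcases one checks $H(r_y')<0$ directly, either by $H(r_y')\leq H(0)=G(y)<0$ when $y<\alpha$, or by $H(r_y')=G(u_y(r_y'))\leq G(\alpha)<0$ when $y>\alpha$. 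Finally (6) is a short consequence of (5): $H_{y_0}(r_{y_0}')<0$ persists under small perturbations by continuous dependence, and a strictly negative $H$ keeps $u_y$ bounded away from $0$ forever (since $G(v)\to 0$ as $v\to 0^+$), so $y\in S_+$. The main obstacle I expect is the argument in (5) guaranteeing the existence of a positive critical point of $u_y'$ (the underdamped-oscillator step around $\alpha$), as well as the uniqueness/transversality assertions at zeros and critical points of $u_y$; once these are nailed down, the rest reduces to careful bookkeeping of the signs of $H$ and $G$.
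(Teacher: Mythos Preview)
Your proposal is correct and follows essentially the same route as the paper: energy monotonicity $H'\leq 0$, continuous dependence on $y$, and linearization near the equilibrium $\alpha$ for part~(5). There are only minor differences in the details. In~(3) the paper uses a ``comeback'' argument (a first local minimum at $r'$ would force a later point $r''$ with $u_y(r'')=u_y(r')$ and $H(r'')<H(r')$, making $u_y'(r'')^2<0$), whereas you argue directly that $H(r_0)=G(u_y(r_0))<0$ at the first critical point contradicts $H\geq 0$ later; both work, but note the forward reference to~(4), which is harmless since your proof of~(4) is self-contained and does not use~(3) (the paper's proof of~(4) does rely on~(3)). One small slip: in~(5), for $y>\alpha$ you write $G(u_y(r_y'))\leq G(\alpha)$, but since $G'=g<0$ on $(0,\alpha)$, $G$ is \emph{decreasing} there and the inequality goes the other way; nonetheless $u_y(r_y')\in(0,\alpha)$ still gives $G(u_y(r_y'))<0$, so your conclusion $H(r_y')<0$ is unaffected.
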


\begin{remark}
From (\ref{proppartition3}), we see that if $G(\eta)\le 0$ for all $\eta\in (0,\beta)$, then $S_0=\emptyset$. Hence, if $S_0\neq\emptyset$, as we assume in our case, we can define 
$$\gamma:=\inf\{\eta\in(0,\beta), G(\eta)>0\}.$$ 
\end{remark}

\begin{proof}
\textit{(1)} If $y\in (0,\alpha]$, then we have 
$$H(r)\leq H(0)=G(y)=\int_0^yg(t)\,dt<0$$ 
for all $r$ since the local energy $H$ is decreasing along a solution and $g<0$ on $(0,\alpha)$. Therefore $u(r)$ cannot vanish (at a zero of $u$ we would have $H(r)\geq0$). This proves that $(0,\alpha]\subset S_+$.

\smallskip

\noindent \textit{(2)} At any point $r>0$ so that $u_y(r)=0$ we must have $u_y'(r)\neq0$, otherwise $u_y\equiv0$. The implicit function theorem then proves that $r_y$ depends smoothly on the initial condition $y$ and hence $S_-$ is open.

\smallskip

\noindent \textit{(3)} This is~\cite[Lem.~4]{LewRot-15} and the argument goes as follows. For $y\in S_0\cup S_-\subset (\alpha,\beta)$ we have $u''_y(0)=-g(y)/d<0$ since $g$ is positive on $(\alpha,\beta)$. This shows that $u_y'(r)<0$ for small $r>0$. On the other hand, for $y\in S_-$ we have $u_y'(r_y)<0$ since $r_y$ is by definition the first zero of $u_y$. If $u'$ changes sign before $r_y$ then $u_y$ must have a local strict minimum at some point $0<r'<r_y$ and then there must be another point $r''$ at which $u_y(r'')=u_y(r')$. But then 
$$\frac{u_y'(r'')^2}{2}=H(r'')-H(r')=-(d-1)\int_{r'}^{r''}\frac{u_y'(s)^2}{s}\,ds<0,$$
a contradiction. Therefore $u_y$ must vanish before $u'_y$ and the solution crosses first the horizontal axis in the phase portrait. The argument is similar when $r_y=+\ii$. 

\smallskip

\noindent \textit{(4)} If $y\in S_0$, then $u'_y<0$ and for $r$ large enough $u''_y(r)=-\frac{d-1}{r}u'_y(r)-g(u_y(r))>0$. Hence, $u'_y$ has a limit at infinity, which can only be zero since $u$ tends to zero. Next, because of the monotonicity of the energy $H$, 
\begin{equation}
 \int_0^y g(s)\,ds=H(0)>\lim_{r\to+\infty}H(r)=0.
 \label{eq:estim_G}
\end{equation}
Finally, the explicit decay rate of $u_y$ and $u'_y$ is a classical fact whose proof can for instance be found in~\cite{BerLioPel-81}. 

\smallskip

\noindent \textit{(5)} This is \cite[Lem.~5]{LewRot-15}, which follows the presentation in \cite{Frank-13}. If $y=\alpha$, then $u_y\equiv \alpha$ and $H(r)<0$ for all $r\in \R^+$. Hence, let $y\neq \alpha$. First of all, we prove that $u'_y$ must vanish. Otherwise, for all $y\in S_+\cap (0,\alpha)$, $u_y$ is increasing and,  for all $y\in S_+\cap (\alpha,\beta)$, $u_y$ is decreasing. This implies that $\lim_{r\to+\infty }u_y(r)=\alpha$. Next, let $U(r)=r^{\frac{d-1}{2}}(u_y(r)-\alpha)$. The function $U$ solves the equation
$$
U''=\left(\frac{(d-1)(d-3)}{4r^2}- \frac{g(u_y)}{u_y-\alpha}\right)U
$$
and, in the limit $r\to+\infty$, $U''\sim -g'(\alpha) U$ with $g'(\alpha)>0$. This leads to a contradiction. Hence $u'_y$ vanishes and we call $r'_y$ its first root.

To prove $H(r'_y)<0$, we consider two cases. First, if $y\le \alpha$, $H(r'_y)<H(0)<0$. If $y>\alpha$, then $u_y$ has a local minimum at $r'_y$. Hence, from equation~\eqref{nlradeq}, we obtain $g(u_y(r'_y))<0$ which implies $0<u_y(r'_y)<\alpha$. Finally, $H(r'_y)<0$.

\smallskip

\noindent \textit{(6)} To prove that $S_+$ is open, we proceed again as in \cite[Lem.~5]{LewRot-15}. We know that $(0,\gamma)\subset S_+$ ( where $\gamma=\inf\{\eta\in(0,\beta), G(\eta)>0\}$). So let $y\in S_+\cap [\gamma,\beta)$ and $z$ in a small neighborhood of $y$. As a consequence, $u_z$ has a local minimum at $r'_z$ with $H(r'_z)<0$ which implies $G(u_z(r))<0$ for all $r>r'_z$. Hence, there exists $\varepsilon>0$ such that $\varepsilon\le u_z(r)\le \gamma-\varepsilon$. Therefore, $z\in S_+$.
\end{proof}

Let now $v_y$ be the unique solution to the linear problem 
\begin{equation}\label{nlradeqlin}
\left\{
\begin{aligned}
&L(v):=v''+\frac{d-1}{r}v'+g'(u_y)v=0 \text{ on }(0,\ii),\\
&v(0)=1,\\
&v'(0)=0.
\end{aligned}
\right.
\end{equation}
The function $v_y$ is the variation of $u_y$ with respect to the initial condition $u_y(0)=y$, that is, $v_y=\partial_yu_y$. We have the following proposition on the solution to~\eqref{nlradeqlin}, which is the core of the proof of the theorem.

\begin{lemma}[Solution of the linearized problem]\label{proplinear}Let $y\in S_0$. Then 
\begin{enumerate}
\item $v_y$ vanishes exactly once.
\item $v_y(r)$ and $v_y'(r)$ diverge exponentially fast to $-\infty$ as $r\to+\infty$.
\end{enumerate}
\end{lemma}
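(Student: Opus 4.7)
My plan is to adapt the strategy of \cite{LewRot-15} and McLeod \cite{McLeod-93}, built around two Pohozaev-type Wronskian identities. Using \eqref{nlradeq}, a direct computation gives $L(u_y)=u_yg'(u_y)-g(u_y)$, $L(ru_y')=-2g(u_y)$, and (after differentiating \eqref{nlradeq} once in $r$) $L(u_y')=\tfrac{d-1}{r^2}u_y'$. Consequently, for any $\lambda>1$, the Pohozaev-type combination $\psi_\lambda(r):=u_y(r)+\tfrac{\lambda-1}{2}\,r\,u_y'(r)$ satisfies $L(\psi_\lambda)=I_\lambda(u_y)$. Together with $L(v_y)=0$ and Lagrange's identity for the formally self-adjoint radial operator $L$, this yields the two workhorses
\begin{align*}
\bigl(r^{d-1}(\psi_\lambda v_y'-\psi_\lambda'v_y)\bigr)'&=-\,r^{d-1}\,I_\lambda(u_y)\,v_y, \\
\bigl(r^{d-1}(u_y'v_y'-u_y''v_y)\bigr)'&=-(d-1)\,r^{d-3}\,u_y'\,v_y.
\end{align*}

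For the existence of a zero in (1), I will argue by contradiction assuming $v_y>0$ on $[0,+\infty)$. By Lemma~\ref{proppartition}(3), the map $r\mapsto h(r):=1-\tfrac{2u_y(r)}{r u_y'(r)}$ is continuous on $(0,+\infty)$, strictly greater than $1$ there, with $h(0^+)=+\infty$ (using $u_y'(r)\sim -g(y)r/d$ at the origin) and $h(+\infty)=1^+$ (using the exponential decay of $u_y,u_y'$). An intermediate-value argument then selects $\lambda>1$ and $R>0$ for which simultaneously $\psi_\lambda(R)=0$ and $u_y(R)=x_*(\lambda)$, the unique root of $I_\lambda$ in $(\alpha,\beta)$ provided by (H2). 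For this $\lambda$, (H2) together with $g'(0)<0$ and $g'(\beta)\leq 0$ forces $I_\lambda(u_y)<0$ on $(0,R)$ and $I_\lambda(u_y)>0$ on $(R,+\infty)$. Integrating the first identity on $(0,R)$ and on $(R,+\infty)$, and discarding the boundary term at infinity using the $O(re^{-\omega r})$ decay of $\psi_\lambda,\psi_\lambda'$ (with $\omega=\sqrt{-g'(0)}$), produces two equations whose boundary value at $R$ appears with opposite sign conventions while the corresponding integrals carry fixed opposite signs --- a contradiction. For the uniqueness of the zero, if $v_y$ had two zeros $0<a<b$, then uniqueness for \eqref{nlradeqlin} forces $v_y'(a)<0<v_y'(b)$, while $u_y'<0$ on $[a,b]$; a similar but more delicate choice of $\lambda$ then makes the signs of $\psi_\lambda(a)v_y'(a)$, $\psi_\lambda(b)v_y'(b)$, and the integral of $r^{d-1}I_\lambda(u_y)v_y$ over $(a,b)$ incompatible.

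To prove (2), once $v_y<0$ on $(r_0,+\infty)$ has been established, I will use that $g'(u_y(r))\to g'(0)=-\omega^2$ at infinity, so that $v_y(r)$ admits the asymptotic expansion $v_y(r)\sim (Ae^{-\omega r}+Be^{\omega r})r^{-(d-1)/2}$. Applying the first identity at $\lambda=1$ --- equivalently, the Wronskian with $u_y$ --- and computing the limit as $r\to+\infty$ of $r^{d-1}(u_y v_y'-u_y'v_y)$, which is proportional to $B$, I will identify $B<0$ from the sign of the right-hand side; hence $v_y(r),v_y'(r)\to-\infty$ exponentially at rate $\omega$.

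The main obstacle I expect is the existence-of-zero step: matching $\lambda$ and $R$ so that the sign structure from (H2) and the boundary contributions at infinity combine into an unambiguous contradiction. The working assumption $v_y>0$ on $[0,+\infty)$ is a priori compatible with $v_y$ containing an exponentially growing mode, so that the quantity $r^{d-1}(\psi_\lambda v_y'-\psi_\lambda' v_y)$ is only of order $r$ at infinity and does not vanish in the limit without further input. Carrying out the asymptotic analysis of this quantity with sufficient precision to cancel against the integrals is the technical heart of the argument as developed in \cite{LewRot-15}.
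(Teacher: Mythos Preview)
Your plan reverses the roles of the two test functions relative to the paper, and this creates a genuine gap in the existence-of-zero step. You propose to derive a contradiction from $v_y>0$ by integrating the Wronskian identity with $\psi_\lambda$ on $(0,R)$ and on $(R,+\infty)$. Even granting your intermediate-value selection of $\lambda$ and $R$, and even granting that the boundary term at infinity could be discarded, no contradiction arises: writing $W(r)=r^{d-1}(\psi_\lambda v_y'-\psi_\lambda'v_y)$, your sign analysis gives $W'>0$ on $(0,R)$ and $W'<0$ on $(R,+\infty)$, so integration on $(0,R)$ yields $W(R)>0$, while integration on $(R,+\infty)$ (with $W(\infty)=0$) yields $-W(R)<0$, i.e.\ again $W(R)>0$. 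The ``opposite sign conventions'' you invoke cancel against the opposite signs of the integrals; the two equations are perfectly compatible. Moreover, under $v_y>0$ with a growing mode $B>0$, one computes $W(r)\sim -(\lambda-1)\omega^2CB\,r\to-\infty$, which is still consistent with $W$ increasing on $(0,R)$ and then decreasing. Your closing claim that this can be salvaged by ``asymptotic analysis with sufficient precision \dots\ as developed in~\cite{LewRot-15}'' is not how that reference proceeds.

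The paper (following~\cite{LewRot-15}) avoids this circularity by using the \emph{other} test function $f=u_y'$ for the existence step. The identity $(r^{d-1}(v_yu_y''-u_y'v_y'))'=(d-1)r^{d-3}u_y'v_y$ is negative if $v_y>0$, hence $r^{d-1}v_y^2(u_y'/v_y)'$ is decreasing from~$0$; this forces $0<v_y\le -cu_y'$, so $v_y$ \emph{must decay exponentially}. Reading the same Wronskian again then gives $v_y'\to-\infty$, a contradiction. No boundary information at infinity and no appeal to~(H2) are needed here. For the \emph{uniqueness} of the zero, your ``more delicate choice of $\lambda$'' is too vague; the paper instead uses the deformation argument from~\cite[pp.~357--358]{Tao-06}, starting from the constant solution at $y=\alpha$.

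For part~(2), taking $\lambda=1$ is problematic: (H2) is stated only for $\lambda>1$, so the sign of $I_1(u_y)$ is not controlled, and since $v_y$ itself changes sign, $\int_0^\infty r^{d-1}I_1(u_y)v_y\,dr$ has no determined sign from which to read off $B<0$. The paper instead chooses $\lambda=1+2c>1$ with $c=-u_y(r_*)/(r_*u_y'(r_*))>0$, so that $\psi_\lambda(r_*)=0$; the Wronskian then vanishes at both $0$ and $r_*$, forcing its derivative to vanish somewhere in $(0,r_*)$. At that point $I_\lambda(u_y)=0$, and (H2) for this $\lambda>1$ pins down the unique root of $I_\lambda\circ u_y$, yielding $I_\lambda(u_y)>0$ hence $W'<0$ on $(r_*,\infty)$. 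This is what drives $v_y,v_y'\to-\infty$.
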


\begin{proof}
This is exactly \cite[Lem. 7, Lem. 8]{LewRot-15} and we will not reproduce all the details here. The argument is based on the Wronskian identity 
\begin{equation}\label{eqwronskian}
(r^{d-1}(v_y f' - fv'_y))'=r^{d-1}v_y L(f)
\end{equation}
for different choices of the test function $f$, where $L(f)$ is defined in~\eqref{nlradeqlin}. In particular, a simple calculation shows that 
\begin{align*}
L(u_y)&=u_yg'(u_y)-g(u_y)\\
L(u_y')&=\frac{d-1}{r^2}u'_y
\end{align*}
and
\begin{equation*}
L(ru_y')=-2g(u_y).
\end{equation*}

\smallskip

\noindent \textit{(1)} We argue by contradiction and assume that $v_y>0$. Using $f=u_y'<0$ provides 
$$(r^{d-1}(v_y u_y'' - u_y'v'_y))'=(d-1)r^{d-3}u'_yv_y<0$$ 
since $v_y>0$ and $u'_y<0$. This shows that $r^{d-1}(v_y u_y'' - u_y'v'_y)=r^{d-1}v_y^2(u_y'/v_y)'$ is decreasing and since it vanishes at the origin, $u'_y/v_y$ is decreasing. Since this function is negative close to the origin, we have $0<v_y\leq -c u'_y$ for some $c>0$ and all $r\geq 1$.  In addition, $r^{d-1}(v_y u_y'' - u_y'v'_y)\leq -c$ for (another) $c>0$ and all $r\geq 1$. But $|v_y u_y''|\leq c|u_y''|\,|u_y'|$ decreases exponentially at infinity and hence $r^{d-1}u_y'v'_y\geq c/2$ for $r$ large enough. From the behavior at infinity of $u'_y$ this proves that $-v'_y\geq c/2r^{(1-d)/2}e^{\sqrt{g'(0)}r}$ for large $r$, which shows that $v'_y\to-\ii$, a contradiction. 

The proof that it vanishes only once is the same as in~\cite[p~p. 357--358]{Tao-06}, deforming solutions starting from the constant solution $u\equiv \alpha$ at $y=\alpha$ and using that there are no double zeroes.

\smallskip

\noindent \textit{(2)} For the proof of the central fact that $v_y,v'_y\to-\ii$, we call $r_*$ the unique zero of $v_y$ at which we must have $v_y'(r_*)<0$. We then take in the Wronskian identity $f=u_y+cru'_y$ where $c=-u_y(r_*)/(r_*u'_y(r_*))>0$ is chosen so that $f(r_*)=0$. Then we obtain
$$(r^{d-1}(v_y f' - fv'_y))'=r^{d-1}v_y\Big(u_yg'(u_y)-(1+2c)g(u_y)\Big)=r^{d-1}v_y I_\lambda(u_y)$$
with $\lambda=1+2c$. The function $r^{d-1}(v_y f' - fv'_y)$ vanishes both at $0$ and at $r_*$, hence its derivative must vanish at least once in $(0,r_*)$. At this point $\tilde r$  we have $I_\lambda(u_y(\tilde r))=0$ and since $I_\lambda$ is assumed to have only one root over $(0,\beta)$, this shows that $I_\lambda(u_y)>0$ hence $(r^{d-1}(v_y f' - fv'_y))'<0$ on $(r_*,\ii)$. The argument is then similar as above to show that $v_y,v'_y\to-\ii$, see~\cite[Lem.~8]{LewRot-15}.
\end{proof}

Note that Lemma~\ref{proplinear} shows that, if $u_y$ is a positive radial solutions to~\eqref{nleq} which vanishes at $+\infty$ then it is \emph{radial} non-degenerate. Indeed, since the unique solution $v_y$ to~\eqref{nlradeqlin} diverges exponentially fast when $r\to\ii$, the kernel of $L$ as an operator on $L^2(\R_+,r^{d-1}dr)$ is trivial. That the kernel in the full space $L^2(\R^d)$ is spanned by the partial derivatives of $u$ will be proved later in Lemma~\ref{lem:nondegenerate}. 

At this point we have all the tools for proving the uniqueness of positive solutions to~\eqref{nlradeq}. This is done by using the following third proposition.

\begin{lemma}\label{propisolated} Let $y\in S_0$. Then there exists $\varepsilon>0$ such that $(y-\varepsilon,y)\subset S_+$ and $(y,y+\varepsilon)\subset S_-$.
\end{lemma}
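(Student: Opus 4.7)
My plan is to exploit Lemma~\ref{proplinear}, which tells us that for $y \in S_0$ both $v_y(r) = \partial_y u_y(r)$ and $v_y'(r)$ diverge to $-\infty$ exponentially at rate $\sqrt{-g'(0)}$, while $u_y(r)$ and $u_y'(r)$ decay to $0$ at the same rate. Consequently, the ratios $|v_y|/u_y$ and $|v_y'|/|u_y'|$ blow up at infinity, and this is what will drive the whole argument.

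For the claim $(y, y+\varepsilon) \subset S_-$, I fix $h > 0$ small and select a radius $R = R(h)$ large enough that $h |v_y(R)|$ dominates $u_y(R)$, while small enough (of order $\log(1/h)$) that the nonlinear correction to the linearization stays under control. By smooth dependence on initial data on the compact interval $[0, R]$,
\begin{equation*}
u_{y+h}(R) = u_y(R) + h\, v_y(R) + \text{error}(h, R),
\end{equation*}
and with $R$ chosen correctly the middle term is strictly negative and beats $u_y(R)$, giving $u_{y+h}(R) < 0$. Since $u_{y+h}(0)=y+h>0$, this means $u_{y+h}$ must cross zero somewhere in $(0,R)$, so $y+h \in S_-$.

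For the opposite claim $(y-\varepsilon, y) \subset S_+$, I would run the symmetric perturbation but track the derivative: the same choice of $R$ yields $u'_{y-h}(R) \approx u'_y(R) + h|v'_y(R)| > 0$. Since $u'_{y-h}(0)=0$, there is then a first point $r'\in(0,R)$ with $u'_{y-h}(r')=0$, at which $u_{y-h}$ attains a local minimum. Because $y\in S_0$ forces $G(y)>0$ by Lemma~\ref{proppartition}(\ref{proppartition3}), one has $y>\gamma$, where $\gamma$ is the first positive root of $G$; and I can arrange that $u_{y-h}(r') < \gamma$ by choosing $R$ so that $u_y(R)+h|v_y(R)|$ is a prescribed value in $(0,\gamma)$. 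Then $H_{y-h}(r') = G(u_{y-h}(r')) < 0$, and because $H$ is non-increasing along the flow, $H_{y-h}(r) < 0$ for all $r \geq r'$. This forbids $u_{y-h}$ from reaching zero (which would require $H = \tfrac12(u'_{y-h})^2\geq 0$) and also forbids decay to zero at infinity (which would force $H\to 0$); between $0$ and $r'$ the function $u_{y-h}$ is monotonically decreasing down to $u_{y-h}(r')>0$. Hence $y-h\in S_+$.

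The hard part will be making the perturbation argument rigorous: because $R=R(h)$ grows like $\log(1/h)$, a naive Taylor remainder of the form $O_R(h^2)$ carries a constant that itself grows exponentially in $R$, so a crude Gronwall estimate yields an error of the same order as the main term $h v_y(R)$. To beat this, I intend to exploit the exact exponential scale of the linearized problem: rather than bounding errors on $[0,R]$ by a black-box constant, I will analyze the equation satisfied by $(u_{y+h}-u_y)/h$ and compare it asymptotically to the limiting linear equation at infinity, whose fundamental solutions behave like the exponentially decaying mode (carried by $u_y$) and the exponentially growing mode (carried by $v_y$). Equivalently, one may phrase the argument in terms of the hyperbolic equilibrium $(0,0)$ of the limiting autonomous system $u''+g(u)=0$ and use a stable-manifold-style tracking: for $y\in S_0$ the trajectory sits exactly on the stable manifold, and perturbing $y$ by $\pm h$ tilts the trajectory off it in opposite unstable directions, which correspond precisely to $S_-$ and $S_+$.
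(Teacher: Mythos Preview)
Your strategy is plausible and could eventually be pushed through, but it is substantially harder than what the paper does, and the ``hard part'' you identify---controlling the Taylor remainder when $R=R(h)$ grows like $\log(1/h)$---is an obstacle you created by the choice of comparison. The paper sidesteps it entirely.

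The key difference: the paper does \emph{not} let $R$ depend on $h$, and does \emph{not} try to force $u_{y+h}(R)<0$ or $u'_{y-h}(R)>0$. Instead it fixes $R$ once (depending only on $y$), large enough that $u_y(r)$ stays in a region $[0,a)$ where $g'\le g'(0)/2<0$ for all $r\ge R$, and such that $v_y(R)<0$, $v'_y(R)<0$ (possible by Lemma~\ref{proplinear}). Ordinary smooth dependence on the compact interval $[0,R]$ then gives, for $z\in(y,y+\varepsilon)$, only the qualitative information
\[
w(R)<0,\qquad w'(R)<0,\qquad\text{where }w:=u_z-u_y,
\]
with no need to compare $h|v_y(R)|$ against $u_y(R)$. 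Now argue by contradiction: if $z\in S_0\cup S_+$ then $u_z$ stays positive, so $w$ must eventually tend to $0$ or become positive, and therefore has a strict negative minimum at some $R'>R$. The ODE yields
\[
w''(R')=-\big(g(u_z(R'))-g(u_y(R'))\big)=-g'(\theta)\,w(R')
\]
for some $\theta$ between $u_z(R')$ and $u_y(R')$; both lie in $[0,a)$ by monotonicity, so $g'(\theta)<0$ and $w''(R')<0$, contradicting the minimum. Hence $z\in S_-$. The mirror of this argument gives $(y-\varepsilon,y)\subset S_+$.

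By comparing $u_z$ to $u_y$ rather than to $0$, the paper replaces your delicate quantitative balancing (and the proposed stable-manifold machinery) with a two-line maximum-principle argument on a fixed interval. Your plan to invoke the hyperbolic structure at the origin is not wrong, but it is replacing a very short computation with a substantial piece of analysis; if you pursue it, note that the system is non-autonomous (the $(d-1)u'/r$ term), so a straightforward autonomous stable-manifold theorem does not apply directly and you would still need a careful perturbation argument.
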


As above, the proof goes as in~\cite[Lem. 3(b)]{McLeod-93}.

\begin{proof} Let $y\in S_0$. Choose $a>0$ such that $g'(s)\le g'(0)/2$ for all $s\in[0,a)$ and $\bar R$ such that $u_y(r)\le a$ for all $r\ge \bar R$. Such an $\bar R$ exists since for all $y\in S_0$, $u_y$ is strictly decreasing and goes to $0$ at $+\infty$. Moreover, thanks to Lemma~\ref{proplinear}, we can choose $R\ge \bar R$ such that $v_y(R)<0$ and $v'_y(R)<0$.

As a consequence, there exists $\varepsilon>0$ such that for all $z\in (y,y+\varepsilon)$, we have $0<u_z(R)<u_y(R)$ and $u'_z(R)<u'_y(R)<0$. In particular, the function $w:=u_z-u_y$ is such that $w(R)<0$ and $w'(R)<0$. Suppose, by contradiction, $z\in S_0\cup S_+$. Then $w$ must tend to $0$ or become positive at some point. Therefore, the function $w$ must have a negative minimum at $R'>R$. As a consequence, by using~\eqref{nlradeq}, we obtain
\begin{equation*}
w''(R')=-(g(u_z(R'))-g(u_y(R')))=- g'(\theta)w(R')
\end{equation*}
for some $0<u_z(R')<\theta<u_y(R')\le a$. Hence, $g'(\theta)\le g'(0)/2<0$. This leads to $w''(R')<0$ which is a contradiction. As a conclusion, $z\in S_-$. The argument is the same for $z\in (y-\varepsilon,y)$.
\end{proof}

Lemma~\ref{propisolated} implies that any $y\in S_0$ is an isolated point. Since $S_+$ and $S_-$ are open sets, they can only be separated by points in $S_0$. Now, the lemma says that a point in $S_0$ can only serve as a transition between $S_+$ below and $S_-$ above. Hence, there can be only one transition of this type and $S_0$ contains at most one point. This concludes the proof of uniqueness. 

Our last step is classical and consists in showing the non-degeneracy in the whole space $L^2(\R^d)$.

\begin{lemma}[Non-degeneracy in $L^2(\R^d)$]\label{lem:nondegenerate} Let $u$ a positive radial solution to~\eqref{nleq} with $\|u\|_{\infty}<\beta$ and such that $u(|x|)\to 0$ as $|x|\to +\infty$. Let $\mathcal L_{\rm tot}$ the linearized operator at $u$. Hence, for any $v\in L^2(\R^d,\C)$, 
$\mathcal L_{\rm tot} v:=\cL\re(v)+i\cL'\im(v)$
with 
$$\cL=-\Delta- g'(u),\qquad \cL'=-\Delta -\frac{g(u)}{u}.$$
Then we have
$$\ker(\cL)={\rm span}(\partial_{x_1}u,\ldots,\partial_{x_d}u),\qquad \ker(\cL')={\rm span}(u).$$
\end{lemma}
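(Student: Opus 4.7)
The plan is to diagonalize both operators via spherical harmonics and reduce to 1D radial operators, where Lemma~\ref{proplinear} handles the critical sector and a Perron--Frobenius/Hardy argument handles the rest.

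Using the decomposition $L^2(\R^d,\R) = \bigoplus_{\ell \ge 0} L^2(\R_+, r^{d-1}dr) \otimes \mathcal{Y}_\ell$, where $\mathcal{Y}_\ell$ denotes the space of spherical harmonics of degree $\ell$ on $\bS^{d-1}$ (so $\dim\mathcal{Y}_0 = 1$ and $\dim\mathcal{Y}_1 = d$), each sector is rotation-invariant and $\cL$, $\cL'$ act on the radial factor as the Sturm--Liouville operators
\begin{equation*}
L_\ell := -\partial_r^2 - \frac{d-1}{r}\partial_r + \frac{\ell(\ell+d-2)}{r^2} - g'(u), \qquad L'_\ell := -\partial_r^2 - \frac{d-1}{r}\partial_r + \frac{\ell(\ell+d-2)}{r^2} - \frac{g(u)}{u}.
\end{equation*}
Since $\partial_{x_1}u,\ldots,\partial_{x_d}u$ sit in the $\ell=1$ sector and $u$ in the $\ell=0$ sector, it suffices to prove $\ker L_0=\{0\}$, $\ker L_1={\rm span}(u')$, $\ker L_\ell=\{0\}$ for $\ell\ge 2$, $\ker L'_0={\rm span}(u)$, and $\ker L'_\ell=\{0\}$ for $\ell\ge 1$.

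For $\cL$: on the radial sector, Lemma~\ref{proplinear} shows that the solution of $L_0 v=0$ with $v(0)=1$, $v'(0)=0$ diverges exponentially at $+\infty$, while the linearly independent solution is singular at the origin; hence $L_0$ has no nontrivial solution in $L^2(\R_+,r^{d-1}dr)$. On the $\ell=1$ sector, translation invariance of~\eqref{nleq} gives $\cL(\partial_{x_j}u)=0$, hence $L_1 u'=0$. Since $u$ is strictly radial-decreasing (by Lemma~\ref{proppartition}), $-u'>0$ on $(0,\infty)$, so by Perron--Frobenius $-u'$ spans the ground-state eigenspace of $L_1$ at eigenvalue $0$ and $L_1\ge 0$. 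For $\ell\ge 2$, we have $L_\ell=L_1+c_\ell r^{-2}$ with $c_\ell:=\ell(\ell+d-2)-(d-1)\ge d+1>0$; any $v\in\ker L_\ell$ lies in the form domain by elliptic regularity and satisfies $0=\langle v,L_1 v\rangle+c_\ell\int_0^\infty v(r)^2\,r^{d-3}\,dr$, forcing $v\equiv 0$.

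For $\cL'$: the equation $\Delta u+g(u)=0$ rewrites as $L'_0 u=0$; since $u>0$, Perron--Frobenius again gives $\ker L'_0={\rm span}(u)$ and $L'_0\ge 0$. For $\ell\ge 1$, $L'_\ell=L'_0+\ell(\ell+d-2)\,r^{-2}$ is strictly positive in the quadratic form sense, so $\ker L'_\ell=\{0\}$ by the same Hardy-type argument. The only non-routine ingredient is Lemma~\ref{proplinear}, which supplies the radial non-degeneracy of $\cL$; the angular decomposition and Perron--Frobenius steps are classical.
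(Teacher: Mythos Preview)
Your proof is correct and follows essentially the same approach as the paper: angular-momentum decomposition of $\cL$, Lemma~\ref{proplinear} for the radial sector, Perron--Frobenius for $\ell=1$, and monotonicity in $\ell$ for the higher sectors. The only cosmetic difference is that the paper handles $\cL'$ in one stroke---since $u>0$ solves $\cL'u=0$, Perron--Frobenius on the full operator $\cL'$ immediately gives $\ker(\cL')=\mathrm{span}(u)$ without decomposing into spherical harmonics---whereas you run the sector-by-sector argument for $\cL'$ as well; both are fine.
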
  

\begin{proof}
Since $u$ tends to zero at infinity, the two potentials $-g'(u)$ and $-g(u)/u$ are uniformly bounded. Therefore the operators $\cL$ and $\cL'$ are self-adjoint on $L^2(\R^d)$, with domain $H^2(\R^d)$ and  form domain $H^1(\R^d)$. Moreover they satisfy the Perron-Frobenius property, that their first eigenvalue, when it exists, is necessarily simple with a positive eigenfunction, by the Perron-Frobenius theorem~\cite{ReeSim4}. Finally, any positive eigenfunction is necessarily the first one. 

Since $u$ is a positive solution to~\eqref{nleq}, then $\cL'u=0$. Then $0$ is the first eigenvalue of $\cL'$ and it is non-degenerate, hence $\ker(\cL')={\rm span}(u)$.

The argument for $\cL$ follows~\cite{Weinstein-85}. First of all, we decompose $L^2(\R^d)$ in angular momentum sectors as 
$$L^2(\R^d)=\bigoplus\limits_{\ell\ge 0} L^2(\R_+,r^{d-1}dr)\otimes \mathcal K_\ell$$ with $K_\ell$ the $\ell$th eigenspace of the Laplace-Beltrami operator on the sphere $\mathbb{S}^{d-1}$. Next, since $\cL$ commutes with space rotations, it can be written as $\cL=\bigoplus\limits_{\ell\ge 0} A^{(\ell)}\otimes \mathds{1}$ where
\begin{equation*}
A^{(\ell)}v=-v''-\frac{d-1}{r}v'+\frac{\ell(\ell+d-2)}{r^2}v-g'(u)v
\end{equation*} 
with Neumann boundary condition at $r=0$ for $\ell=0$ and Dirichlet boundary condition for $\ell\ge 1$. By the variational principle, the first eigenvalue of $A^{(\ell)}$ in increasing with $\ell$. Each $A^{(\ell)}$ has the Perron-Frobenius property in $L^2(\R_+,r^{d-1}\,dr)$. Now, the translation-invariance gives $u'\in \ker(A^{(1)})$ and, thanks to Lemma~\ref{proppartition2}, $u'<0$. Therefore $0$ is the first eigenvalue of $A^{(1)}$ and $\ker(A^{(1)})={\rm span}(u')$. Next, for any $\ell \ge 2$, the first eigenvalue of $A^{(\ell)}$ must be positive since $\lambda_1(A^{(\ell)})>\lambda_1(A^{(1)})$, hence  $\ker(A^{(\ell)})=\{0\}$ for $\ell\ge 2$. Finally, it remains to determine $\ker(A^{(0)})$. But $-A^{(0)}$ is simply the operator $L$ defined in~\eqref{nlradeqlin} and we have shown in Lemma~\ref{proplinear} that the kernel of $L$ as an operator on $L^2(\R_+,r^{d-1}dr)$ is trivial. As a conclusion, $\ker(\cL)={\rm span}(\partial_{x_1}u,\ldots,\partial_{x_n}u)$. 
\end{proof}

This concludes the proof of Theorem~\ref{thmuniqnondeg}.\qed

\appendix
\section{Non-degeneracy of $u_0$}\label{app:u_0}

Let $d\geq3$ and $p>q>1$. Let $u_0$ be the unique positive radial solution to the equation
$$-\Delta u_0+u_0^p-u_0^q=0$$
which decays like $u_0(r)\sim C_0r^{2-d}$ at infinity~\cite{BerLio-83,MerPel-90,MerPel-92,KwoMcLPelTro-92}. Define 
$$\cL_0=-\Delta+p(u_0)^{p-1}-q(u_0)^{q-1}$$
the corresponding linearized operator. 

\begin{lemma}[Non-degeneracy of $u_0$]\label{lem:non-degenerate_mu_0}
Let $v$ be the unique solution to
$$\begin{cases}
v''+(d-1)\frac{v'}{r}+qu_0^{q-1}v-pu_0^{p-1}v=0,\\
v(0)=1,\\
v'(0)=0.
  \end{cases}
$$
Then we have $v\notin L^2(\R^d)$, so that 
$$\ker(\cL_0)_{\rm rad}=\{0\},\qquad \ker(\cL_0)=\left\{\partial_{x_1}u_0,...,\partial_{x_d}u_0\right\}.$$
\end{lemma}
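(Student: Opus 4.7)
My plan is to reduce both kernel identities to the single claim $v \notin L^2(\R^d, r^{d-1}dr)$, and then to adapt the Wronskian machinery of Lemma~\ref{proplinear} to this $\mu=0$ setting. The reduction is direct: the angular-momentum decomposition from the proof of Lemma~\ref{lem:nondegenerate} still applies, giving $\ker A^{(1)} = \mathrm{span}(u_0')$ by Perron--Frobenius (with $u_0'<0$ the positive ground state), $\ker A^{(\ell)} = \{0\}$ for $\ell\geq 2$ by $\lambda_1(A^{(\ell)})>\lambda_1(A^{(1)})=0$, and a trivial radial sector because the $r^{2-d}$-singular solution at the origin is excluded from $\mathrm{dom}(\cL_0)$ in all dimensions $d\geq 3$ (it is not in $L^2_{\rm loc}$ for $d\geq 4$ and carries a $\delta_0$ distributional Laplacian for $d=3$). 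Hence any radial kernel element is a multiple of the unique regular solution $v$, which lies in the kernel iff $v\in L^2$.

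With $g(u)=u^q-u^p$, the same identities as in Lemma~\ref{proplinear} give $L(u_0)=(q-1)u_0^q-(p-1)u_0^p$ and $L(ru_0')=2(u_0^p-u_0^q)$, so $f_c := u_0 + cru_0'$ satisfies $L(f_c) = I_\lambda(u_0)$ with $\lambda = 1+2c$ and $I_\lambda(x) = (q-\lambda)x^q - (p-\lambda)x^p$. A direct computation shows $I_\lambda$ has at most one positive root, $x_\lambda^* = [(q-\lambda)/(p-\lambda)]^{1/(p-q)}$, lying in $(0,1)$ iff $\lambda<q$. The decay $g'(u_0)(r) = O(r^{-(q-1)(d-2)})$ with $(q-1)(d-2)>4$ ensures that standard asymptotic theory applies to $L(v)=0$, yielding the two-parameter asymptotic $v(r) = c_1 + c_2 r^{2-d} + o(r^{2-d})$ at infinity (with differentiable error), together with analogous expansions for $u_0, u_0', u_0''$ derived from $u_0 \sim C_0 r^{2-d}$.

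First I would show $v$ has a first zero $r_*\in(0,\infty)$. If $v>0$ everywhere, the Wronskian $\widetilde W(r) := r^{d-1}(v'u_0' - vu_0'')$ satisfies $\widetilde W(0)=0$ and $\widetilde W'(r) = (d-1)r^{d-3}v u_0' < 0$, forcing strict decrease; yet the asymptotic expansions yield $\widetilde W(r) \to 0$ at infinity, a contradiction. Choosing then $c = -u_0(r_*)/(r_*u_0'(r_*))>0$ so that $f_c(r_*)=0$, the Wronskian $W(r) := r^{d-1}(v'f_c - vf_c')$ vanishes at $0$ and at $r_*$ and satisfies $W' = r^{d-1}v I_\lambda(u_0)$. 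Rolle's theorem on $(0,r_*)$ together with the sign analysis of $I_\lambda$ forces $\lambda<q$ and $x_\lambda^*\in(u_0(r_*),u_0(0))$, from which $I_\lambda(u_0)>0$ on the whole interval $(r_*,\infty)$.

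The final step, and also the main obstacle, is to conclude that $c_1\neq 0$: only in dimensions $d\geq 5$ could the decay $v\sim c_2 r^{2-d}$ place $v$ in $L^2$, so the real issue is to rule out the possibility that $v$ matches at infinity the decaying fundamental solution. Matching the asymptotic formula $W(r) = -(d-2)(c(d-2)-1)c_1 C_0 + o(1)$ with the integral representation $W(\infty) = \int_{r_*}^\infty s^{d-1} v(s) I_\lambda(u_0(s))\,ds$ (well defined since $I_\lambda(u_0)\sim(q-\lambda)C_0^q r^{-q(d-2)}>0$, integrable because $q(d-2)>d$) constrains $c_1\neq 0$ in the generic case $c(d-2)\neq 1$. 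The non-generic coincidence $c(d-2)=1$ makes the leading asymptotic term of $W$ vanish and requires a refinement: either using higher-order terms in the expansion $u_0(r) = C_0 r^{2-d}(1+O(r^{-\gamma}))$ with $\gamma=(q-1)(d-2)-2>0$, or replacing $f_c$ by a modified test function $\alpha u_0 + \beta ru_0'$ still satisfying $f(r_*)=0$ but arranged to avoid the cancellation. In either case the conclusion is $c_1\neq 0$, hence $v$ does not tend to zero at infinity and $v\notin L^2(\R^d)$.
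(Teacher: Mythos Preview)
Your Wronskian strategy with $f_c=u_0+cru_0'$ is exactly the one the paper uses, but there is a genuine gap in how you close the argument. You establish that $v$ has a \emph{first} zero $r_*$ and that $I_\lambda(u_0)>0$ on $(r_*,\infty)$, and then you want to read off $c_1\neq0$ from the identity $W(\infty)=\int_{r_*}^\infty s^{d-1}v\,I_\lambda(u_0)\,ds$. But this integral is nonzero only if $v$ keeps a constant sign on $(r_*,\infty)$, and you never prove that $r_*$ is the \emph{only} zero of $v$. With possible further sign changes of $v$ the integral can vanish, and then neither the generic case $c(d-2)\neq1$ nor your proposed refinements say anything. (There is also a harmless sign slip: with your convention $\widetilde W'=-(d-1)r^{d-3}vu_0'>0$, not $<0$.)

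The paper closes this gap by arguing by contradiction and importing one extra piece of information from the $\mu>0$ theory. Assuming $v\in L^2$, one gets $|v|\le Cr^{2-d}$, $|v'|\le Cr^{1-d}$, so that $W(r)\to0$. Orthogonality to the positive ground state of $\cL_0$ forces $v$ to vanish at least once; and since $v_\mu\to v$ locally (because $g'_\mu(u_\mu)\to g'_0(u_0)$) while each $v_\mu$ vanishes exactly once by Lemma~\ref{proplinear}, $v$ vanishes at most once. Hence $v<0$ on all of $(r_*,\infty)$, $W$ is strictly monotone there (whatever the sign of $I_\lambda(u_0)$, which the paper does not even need to determine), and $W(r_*)=0=W(\infty)$ gives the contradiction directly. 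This bypasses entirely your $c(d-2)=1$ case distinction and the delicate asymptotic matching; the missing ingredient in your plan is precisely the ``at most one zero'' input coming from the limit $\mu\to0^+$.
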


\begin{proof}
We assume by contradiction that $v\in L^2(\R^d)$. Then we have the bounds
$$|v(r)|\leq \frac{C}{r^{d-2}},\qquad  |v'(r)|\leq \frac{C}{r^{d-1}},\qquad |v''(r)|\leq \frac{C}{r^{d}}\qquad \forall r\geq1.$$
We have proved in Lemma~\ref{proplinear} that the similar function $v_\mu$ at $\mu>0$ vanishes only once over $(0,\ii)$ and diverges to $-\ii$. From the convergence $g'_\mu(u_\mu)\to g_0'(u_0)$, it follows that $v_\mu\to v$ locally. In particular, $v$ vanishes at most once over $(0,\ii)$. On the other hand, since we have assumed that $v\in L^2(\R^d)$, it has to vanish at least once. This is because we know that $\cL_0$ admits a negative eigenvalue with a positive eigenfunction and that $v$ has to be orthogonal to this eigenfunction. Hence $v$ vanishes exactly once, at some $r_*\in (0,\ii)$. 
Next we follow step by step the argument of Lemma~\ref{proplinear} and use the Wronskian identity with $f=u_0+cru_0'$ with $c=-\frac{u_0(r_*)}{r_*u_0'(r_*)}$. This gives
$$(r^{d-1}(v f' - fv'))'=r^{d-1}v I_\lambda(u_0)$$
with $\lambda=1+2c$. Here
$$I_\lambda(u)=(q-\lambda)u^q-(p-\lambda)u^p$$
vanishes at most once over $(0,\ii)$. Since $r^{d-1}(v f' - fv'_0)$ vanishes both at $0$ and $r_*$, this proves that $I_\lambda$ must vanish on the left of $r_*$, hence has a constant sign on $(r_*,\ii)$. One difference with the case $\mu>0$ is that this sign is unknown, it depends whether $\lambda$ is on the left or right of $q$. In any case, we obtain that $r^{d-1}(v f' - fv')$ is either  increasing or decreasing over $(r_*,\ii)$, and vanishes at $r_*$. 
This cannot happen because $v$ and $f$ decay like $r^{2-d}$ at infinity, whereas their derivatives $v'$ and $f'$ decay like $r^{1-d}$, so that $r^{d-1}(v f' - fv')$ always tends to 0 at infinity.

The rest of the argument for the sectors of positive angular momentum is identical to that of Lemma~\ref{lem:nondegenerate}, of which we use the notation. We know that $\ker(A^{(1)})={\rm span}\{u'_0\}$ and this corresponds to $\partial_{x_1}u_0,...,\partial_{x_d}u_0$ being in the kernel of $\cL_0$. On the other hand, we have $A^{(\ell)}=A^{(1)}+\frac{\ell(\ell +d-2)-d+1}{r^2}> A^{(1)}$ for $\ell\geq2$, which proves that $\ker(A^{(\ell)})=\{0\}$ for $\ell\geq2$. 
\end{proof}


\end{document}